%% LyX 2.3.6.2 created this file.  For more info, see http://www.lyx.org/.
%% Do not edit unless you really know what you are doing.
\documentclass[12pt,oneside,english]{amsart}
\usepackage[T1]{fontenc}
\usepackage[utf8]{inputenc}
\usepackage{geometry}
\geometry{verbose,tmargin=3cm,bmargin=3cm,lmargin=3cm,rmargin=3cm}
\setcounter{tocdepth}{5}
\synctex=-1
\usepackage{babel}
\usepackage{verbatim}
\usepackage{varioref}
\usepackage{amstext}
\usepackage{amsthm}
\usepackage{amssymb}

\makeatletter
%%%%%%%%%%%%%%%%%%%%%%%%%%%%%% Textclass specific LaTeX commands.
\numberwithin{equation}{section}
\numberwithin{figure}{section}
\theoremstyle{plain}
\newtheorem{thm}{\protect\theoremname}[section]
\theoremstyle{remark}
\newtheorem{rem}[thm]{\protect\remarkname}
\theoremstyle{plain}
\newtheorem{prop}[thm]{\protect\propositionname}
\theoremstyle{plain}
\newtheorem{lem}[thm]{\protect\lemmaname}
\theoremstyle{definition}
\newtheorem{defn}[thm]{\protect\definitionname}
\theoremstyle{plain}
\newtheorem{cor}[thm]{\protect\corollaryname}
\theoremstyle{definition}
\newtheorem{problem}[thm]{\protect\problemname}
\theoremstyle{plain}
\newtheorem{conjecture}[thm]{\protect\conjecturename}

%%%%%%%%%%%%%%%%%%%%%%%%%%%%%% User specified LaTeX commands.
\usepackage{hyperref}

\makeatother

\providecommand{\conjecturename}{Conjecture}
\providecommand{\corollaryname}{Corollary}
\providecommand{\definitionname}{Definition}
\providecommand{\lemmaname}{Lemma}
\providecommand{\problemname}{Problem}
\providecommand{\propositionname}{Proposition}
\providecommand{\remarkname}{Remark}
\providecommand{\theoremname}{Theorem}

\begin{document}
\title[The isomorphism problem of projective schemes]{The isomorphism problem of projective schemes and related algorithmic
problems}
\author{Takehiko Yasuda}
\address{Department of Mathematics, Graduate School of Science, Osaka University
Toyonaka, Osaka 560-0043, JAPAN}
\email{yasuda.takehiko.sci@osaka-u.ac.jp}
\thanks{This work was supported by JSPS KAKENHI Grant Number JP18H01112.}
\begin{abstract}
We discuss the isomorphism problem of projective schemes; given two
projective schemes, can we algorithmically decide whether they are
isomorphic? We give affirmative answers in the case of one-dimensional
projective schemes, the case of smooth irreducible varieties with
a big canonical sheaf or a big anti-canonical sheaf, and the case
of K3 surfaces with a finite automorphism group. As related algorithmic
problems, we also discuss decidability of positivity properties of
invertible sheaves, and approximation of the nef cone and the pseudo-effective
cone.
\end{abstract}

\maketitle
\global\long\def\bigmid{\mathrel{}\middle|\mathrel{}}%

\global\long\def\AA{\mathbb{A}}%

\global\long\def\CC{\mathbb{C}}%

\global\long\def\FF{\mathbb{F}}%

\global\long\def\GG{\mathbb{G}}%

\global\long\def\LL{\mathbb{L}}%

\global\long\def\MM{\mathbb{M}}%

\global\long\def\NN{\mathbb{N}}%

\global\long\def\PP{\mathbb{P}}%

\global\long\def\QQ{\mathbb{Q}}%

\global\long\def\RR{\mathbb{R}}%

\global\long\def\SS{\mathbb{S}}%

\global\long\def\ZZ{\mathbb{Z}}%

\global\long\def\bA{\mathbf{A}}%

\global\long\def\ba{\mathbf{a}}%

\global\long\def\bb{\mathbf{b}}%

\global\long\def\bd{\mathbf{d}}%

\global\long\def\bf{\mathbf{f}}%

\global\long\def\bg{\mathbf{g}}%

\global\long\def\bh{\mathbf{h}}%

\global\long\def\bj{\mathbf{j}}%

\global\long\def\bm{\mathbf{m}}%

\global\long\def\bp{\mathbf{p}}%

\global\long\def\bq{\mathbf{q}}%

\global\long\def\br{\mathbf{r}}%

\global\long\def\bs{\mathbf{s}}%

\global\long\def\bt{\mathbf{t}}%

\global\long\def\bv{\mathbf{v}}%

\global\long\def\bw{\mathbf{w}}%

\global\long\def\bx{\boldsymbol{x}}%

\global\long\def\by{\boldsymbol{y}}%

\global\long\def\bz{\mathbf{z}}%

\global\long\def\bA{\mathbf{A}}%

\global\long\def\bB{\mathbf{B}}%

\global\long\def\bC{\mathbf{C}}%

\global\long\def\bD{\mathbf{D}}%

\global\long\def\bE{\mathbf{E}}%

\global\long\def\bF{\mathbf{F}}%

\global\long\def\bG{\mathbf{G}}%

\global\long\def\bM{\mathbf{M}}%

\global\long\def\bP{\mathbf{P}}%

\global\long\def\bS{\mathbf{S}}%

\global\long\def\bU{\mathbf{U}}%

\global\long\def\bV{\mathbf{V}}%

\global\long\def\bW{\mathbf{W}}%

\global\long\def\bX{\mathbf{X}}%

\global\long\def\bY{\mathbf{Y}}%

\global\long\def\bZ{\mathbf{Z}}%

\global\long\def\cA{\mathcal{A}}%

\global\long\def\cB{\mathcal{B}}%

\global\long\def\cC{\mathcal{C}}%

\global\long\def\cD{\mathcal{D}}%

\global\long\def\cE{\mathcal{E}}%

\global\long\def\cF{\mathcal{F}}%

\global\long\def\cG{\mathcal{G}}%

\global\long\def\cH{\mathcal{H}}%

\global\long\def\cI{\mathcal{I}}%

\global\long\def\cJ{\mathcal{J}}%

\global\long\def\cK{\mathcal{K}}%

\global\long\def\cL{\mathcal{L}}%

\global\long\def\cM{\mathcal{M}}%

\global\long\def\cN{\mathcal{N}}%

\global\long\def\cO{\mathcal{O}}%

\global\long\def\cP{\mathcal{P}}%

\global\long\def\cQ{\mathcal{Q}}%

\global\long\def\cR{\mathcal{R}}%

\global\long\def\cS{\mathcal{S}}%

\global\long\def\cT{\mathcal{T}}%

\global\long\def\cU{\mathcal{U}}%

\global\long\def\cV{\mathcal{V}}%

\global\long\def\cW{\mathcal{W}}%

\global\long\def\cX{\mathcal{X}}%

\global\long\def\cY{\mathcal{Y}}%

\global\long\def\cZ{\mathcal{Z}}%

\global\long\def\fa{\mathfrak{a}}%

\global\long\def\fb{\mathfrak{b}}%

\global\long\def\fc{\mathfrak{c}}%

\global\long\def\ff{\mathfrak{f}}%

\global\long\def\fj{\mathfrak{j}}%

\global\long\def\fm{\mathfrak{m}}%

\global\long\def\fp{\mathfrak{p}}%

\global\long\def\fs{\mathfrak{s}}%

\global\long\def\ft{\mathfrak{t}}%

\global\long\def\fx{\mathfrak{x}}%

\global\long\def\fv{\mathfrak{v}}%

\global\long\def\fD{\mathfrak{D}}%

\global\long\def\fJ{\mathfrak{J}}%

\global\long\def\fG{\mathfrak{G}}%

\global\long\def\fM{\mathfrak{M}}%

\global\long\def\fO{\mathfrak{O}}%

\global\long\def\fS{\mathfrak{S}}%

\global\long\def\fV{\mathfrak{V}}%

\global\long\def\fX{\mathfrak{X}}%

\global\long\def\fY{\mathfrak{Y}}%

\global\long\def\ru{\mathrm{u}}%

\global\long\def\rv{\mathbf{\mathrm{v}}}%

\global\long\def\rw{\mathrm{w}}%

\global\long\def\rx{\mathrm{x}}%

\global\long\def\ry{\mathrm{y}}%

\global\long\def\rz{\mathrm{z}}%

\global\long\def\a{\mathrm{a}}%

\global\long\def\AdGp{\mathrm{AdGp}}%

\global\long\def\Aff{\mathbf{Aff}}%

\global\long\def\Alg{\mathbf{Alg}}%

\global\long\def\age{\operatorname{age}}%

\global\long\def\Ann{\mathrm{Ann}}%

\global\long\def\Aut{\operatorname{Aut}}%

\global\long\def\B{\operatorname{\mathrm{B}}}%

\global\long\def\Bl{\mathrm{Bl}}%

\global\long\def\c{\mathrm{c}}%

\global\long\def\C{\operatorname{\mathrm{C}}}%

\global\long\def\calm{\mathrm{calm}}%

\global\long\def\center{\mathrm{center}}%

\global\long\def\characteristic{\operatorname{char}}%

\global\long\def\codim{\operatorname{codim}}%

\global\long\def\Coker{\mathrm{Coker}}%

\global\long\def\Conj{\operatorname{Conj}}%

\global\long\def\D{\mathrm{D}}%

\global\long\def\Df{\mathrm{Df}}%

\global\long\def\diag{\mathrm{diag}}%

\global\long\def\det{\operatorname{det}}%

\global\long\def\discrep#1{\mathrm{discrep}\left(#1\right)}%

\global\long\def\doubleslash{\sslash}%

\global\long\def\E{\operatorname{E}}%

\global\long\def\Emb{\operatorname{Emb}}%

\global\long\def\et{\textrm{ét}}%

\global\long\def\etop{\mathrm{e}_{\mathrm{top}}}%

\global\long\def\el{\mathrm{e}_{l}}%

\global\long\def\Exc{\mathrm{Exc}}%

\global\long\def\Fitt{\operatorname{Fitt}}%

\global\long\def\Gal{\operatorname{Gal}}%

\global\long\def\GalGps{\mathrm{GalGps}}%

\global\long\def\GL{\mathrm{GL}}%

\global\long\def\Grass{\mathrm{Grass}}%

\global\long\def\H{\operatorname{\mathrm{H}}}%

\global\long\def\hattimes{\hat{\times}}%

\global\long\def\hatotimes{\hat{\otimes}}%

\global\long\def\Hilb{\mathrm{Hilb}}%

\global\long\def\Hodge{\mathrm{Hodge}}%

\global\long\def\Hom{\operatorname{Hom}}%

\global\long\def\hyphen{\textrm{-}}%

\global\long\def\I{\operatorname{\mathrm{I}}}%

\global\long\def\id{\mathrm{id}}%

\global\long\def\Image{\operatorname{\mathrm{Im}}}%

\global\long\def\injlim{\varinjlim}%

\global\long\def\iper{\mathrm{iper}}%

\global\long\def\Iso{\operatorname{Iso}}%

\global\long\def\isoto{\xrightarrow{\sim}}%

\global\long\def\J{\operatorname{\mathrm{J}}}%

\global\long\def\Jac{\mathrm{Jac}}%

\global\long\def\Ker{\operatorname{Ker}}%

\global\long\def\Kzero{\operatorname{K_{0}}}%

\global\long\def\lcr{\mathrm{lcr}}%

\global\long\def\lcm{\operatorname{\mathrm{lcm}}}%

\global\long\def\length{\operatorname{\mathrm{length}}}%

\global\long\def\M{\operatorname{\mathrm{M}}}%

\global\long\def\MHS{\mathbf{MHS}}%

\global\long\def\mld{\mathrm{mld}}%

\global\long\def\mod#1{\pmod{#1}}%

\global\long\def\mRep{\mathbf{mRep}}%

\global\long\def\mult{\mathrm{mult}}%

\global\long\def\N{\operatorname{\mathrm{N}}}%

\global\long\def\Nef{\mathrm{Nef}}%

\global\long\def\nor{\mathrm{nor}}%

\global\long\def\NS{\mathrm{NS}}%

\global\long\def\op{\mathrm{op}}%

\global\long\def\ord{\operatorname{ord}}%

\global\long\def\P{\operatorname{P}}%

\global\long\def\PEff{\mathrm{PEff}}%

\global\long\def\PGL{\mathrm{PGL}}%

\global\long\def\pt{\mathbf{pt}}%

\global\long\def\pur{\mathrm{pur}}%

\global\long\def\perf{\mathrm{perf}}%

\global\long\def\pr{\mathrm{pr}}%

\global\long\def\Proj{\operatorname{Proj}}%

\global\long\def\projlim{\varprojlim}%

\global\long\def\Qbar{\overline{\QQ}}%

\global\long\def\R{\operatorname{\mathrm{R}}}%

\global\long\def\Ram{\operatorname{\mathrm{Ram}}}%

\global\long\def\rank{\operatorname{\mathrm{rank}}}%

\global\long\def\rig{\mathrm{rig}}%

\global\long\def\red{\mathrm{red}}%

\global\long\def\reg{\mathrm{reg}}%

\global\long\def\rep{\mathrm{rep}}%

\global\long\def\Rep{\mathbf{Rep}}%

\global\long\def\sbrats{\llbracket s\rrbracket}%

\global\long\def\Sch{\mathbf{Sch}}%

\global\long\def\sep{\mathrm{sep}}%

\global\long\def\Set{\mathbf{Set}}%

\global\long\def\sing{\mathrm{sing}}%

\global\long\def\sm{\mathrm{sm}}%

\global\long\def\SL{\mathrm{SL}}%

\global\long\def\Sp{\operatorname{Sp}}%

\global\long\def\Spec{\operatorname{Spec}}%

\global\long\def\Spf{\operatorname{Spf}}%

\global\long\def\ss{\mathrm{ss}}%

\global\long\def\st{\mathrm{st}}%

\global\long\def\Stab{\operatorname{Stab}}%

\global\long\def\Supp{\operatorname{Supp}}%

\global\long\def\spars{\llparenthesis s\rrparenthesis}%

\global\long\def\Sym{\mathrm{Sym}}%

\global\long\def\tame{\mathrm{tame}}%

\global\long\def\tbrats{\llbracket t\rrbracket}%

\global\long\def\top{\mathrm{top}}%

\global\long\def\tors{\mathrm{tors}}%

\global\long\def\tpars{\llparenthesis t\rrparenthesis}%

\global\long\def\Tr{\mathrm{Tr}}%

\global\long\def\ulAut{\operatorname{\underline{Aut}}}%

\global\long\def\ulHom{\operatorname{\underline{Hom}}}%

\global\long\def\ulIso{\operatorname{\underline{{Iso}}}}%

\global\long\def\ulSpec{\operatorname{\underline{{Spec}}}}%

\global\long\def\Utg{\operatorname{Utg}}%

\global\long\def\Unt{\operatorname{Unt}}%

\global\long\def\Var{\mathbf{Var}}%

\tableofcontents{}

\section{Introduction}

The main purpose of this paper is to discuss the isomorphism problem
of projective schemes over the field $\overline{\QQ}$ of algebraic
numbers. Is it algorithmically decidable whether two given projective
$\overline{\QQ}$-schemes are isomorphic? What if we restrict ourselves
to some class of projective schemes, for example, the class of smooth
projective varieties having a prescribed invariant. Poonen \cite{poonen2011automorphisms}
writes that Totaro asked him about this problem in 2007. The case
of smooth irreducible curves was treated earlier in the 2005 paper
\cite{baker2005finiteness} by Baker, González-Jiménez, González,
and Poonen. The same problem was asked also by Arapura on MathOverflow\footnote{https://mathoverflow.net/questions/21883/isomorphism-problem-for-commutative-algebras-and-schemes}
in 2010. 

To the best of the author's knowledge, the decidability of the isomorphism
problem has been proved in the following two cases:
\begin{enumerate}
\item Smooth irreducible curves (\cite[Lem.\ 5.1]{baker2005finiteness}
for the case of genus $\ne1$ and Poonen's comment in the MathOverflow
thread mentioned above for the case of genus one). 
\item Varieties of general type (see \cite[Rem.\ 12.3]{poonen2014undecidable}
for the proof due to Totaro).
\end{enumerate}
In the same MathOverflow thread as above, there is also discussion
about the cases of K3 surfaces and abelian surfaces, which has not
reached a definite conclusion. 

The main result of the paper is to prove that the isomorphism problem
is decidable in the following cases:
\begin{enumerate}
\item One-dimensional projective schemes (Theorem \ref{thm:1-dim proj}).
\item Smooth projective varieties with a big canonical sheaf or a big anti-canonical
sheaf (Theorem \ref{thm:iso prob big}).
\item K3 surfaces with a finite automorphism group (Theorem \ref{thm: decidable K3}). 
\end{enumerate}
The first two cases slightly generalize ones mentioned above. As an
application of the first case, we show the decidability also in the
case of one-dimensional reduced \emph{quasi-projective} schemes (Theorem
\ref{thm: iso one-dim quasi-proj}). In birational geometry, varieties
of Kodaira dimensions $-\infty$ and $0$ as well as ones of general
type have special importance, as they are considered as building blocks
of all varieties. With case (2) above being solved, varieties of Kodaira
dimension $0$ would be the remaining most imporant case. Besides
K3 surfaces, the isomorphism problem for abelian varieties should
be important, but we do not discuss it in this paper. 
\begin{rem}
It appears difficult to apply the global Torelli theorem for K3 surfaces
\cite[p.\ 332]{barth2004compact} to solve the isomorphism problem
affirmatively. We can approximate the Hodge structure on cohomology
groups with arbitrary precision \cite{simpson2008algebraic}. But
the moduli space of (marked) K3 surfaces is not Hausdorff \cite[p.\ 334]{barth2004compact}.
This suggests that we cannot detect non-isomorphism of K3 surfaces
by approximation. 
\end{rem}

Our strategy to prove these results is to compute the Iso schemes
$\ulIso_{P_{i}}(X,Y)$ for the given projective schemes $X$ and $Y$
and for finitely many polynomials $P_{i}$. The entire Iso scheme
$\ulIso(X,Y)$ is the moduli scheme of isomorphisms $X\xrightarrow{\sim}Y$
and can be embedded into the Hilbert scheme $\Hilb(X\times Y)$ by
sending an isomorphism $f\colon X\to Y$ to its graph $\Gamma_{f}\subset X\times Y$.
The Hilbert scheme is decomposed as $\Hilb(X\times Y)=\coprod_{P}\Hilb_{P}(X\times Y)$,
where $P$ runs over countably many polynomials. This induces a decomposition
$\ulIso(X,Y)=\coprod_{P}\ulIso(X,Y)$ of the Iso scheme. For each
polynomial $P$, $\ulIso_{P}(X,Y)$ is of finite type, but the entire
$\ulIso(X,Y)$ is not generally so. We explain how to algorithmically
compute $\ulIso_{P}(X,Y)$ for each $P$. Having the method of computing
Iso schemes, we then construct an algorithm for each of the classes
of projective schemes mentioned above that produces finitely many
polynomials $P_{1},\dots,P_{n}$ from the given projective schemes
$X$ and $Y$. These polynomials satisfy the condition that $X$ and
$Y$ are isomorphic if and only if $\ulIso_{P_{i}}(X,Y)\ne\emptyset$
for some $i$. Then, whether $X$ and $Y$ are isomorphic or not is
checked by computing these Iso schemes. In construction of finitely
many polynomials as above, we use the Kodaira vanishing as a key ingredient
in case (2) and use computation of the nef cone in case (3). 

We also discuss several algorithmic problems related to the isomorphism
problem. Firstly, we explicitly describe an algorithm to check whether
two given projective schemes embedded in the same projective space
are projectively equivalent (Section \ref{sec:Projective-equaivalence}).
If two projective schemes are projectively equivalent, then they are
isomorphic, but the converse does not generally hold. Secondly, partly
using computation of intersection numbers, we discuss positivity properties
of invertible sheaves from the algorithmic viewpoint. We see that
global generation of a coherent sheaf and very ampleness of an invertible
sheave on a projective scheme is decidable. Using the Nakai-Moishezon
criterion for ampleness and computation of intersection numbers, we
see that, if the scheme is smooth and irreducible, then ampleness
of an invertible sheaf is also decidable (Proposition \ref{prop: ample}).
We do not know whether other positivities, bigness, nefness and pseudo-effectivity,
are decidable. However, if we can compute the Picard number, then
we can approximate the nef cone and the pseudo-effective cone with
arbitrary precision (Proposition \ref{prop: approx cones}). Note
that Poonen, Testa and van Luijk \cite{poonen2015computing} proved
that the Picard number of a smooth irreducible projective variety
is computable, if the Tate conjecture is true. This is the case for
K3 surfaces. For a K3 surface with a finite automorphism group, we
can compute its nef cone (not approximately but exactly), which is
used to show the decidability in case (3) above. 

To end this introduction, we mention a few more related works. Truong
\cite{truong2018bounded} proved the decidability of the bounded birationality
problem. Namely, he proved that for projective varieties $X\subset\PP^{m}$
and $Y\subset\PP^{n}$ and for a positive integer $d$, we can decide
whether there exists a rational map $\PP^{m}\dasharrow\PP^{n}$ of
degree $\le d$ that restricts to a birational map $X\dasharrow Y$.
He also proved the decidability of the \emph{bounded }isomorphism
problem in the case where one of the two given varieties is smooth.
To prove these results, he showed computability of a variety parametrizing
rational maps $\PP^{m}\dasharrow\PP^{n}$ with this property, which
is similar to our computability result regarding Iso schemes.

The isomorphism problem that we consider in this paper is a speical
case of the problem regarding the existence of a morphism $X\to Y$
of $k$-schemes possibly imposed with some condition for a more general
field or ring $k$. For example, the famous negative solution by Davis,
Matiyasevich, Putnam, and Robinson to Hilbert's tenth problem says
that the existence of a morphism $\Spec\ZZ\to Y$ with $Y$ an affine
scheme of finite type over $\ZZ$ is undecidable. One of the other
undecidability results in this direction is the one of Kanel'-Belov
and Chilikov \cite{kaneltextquoteright-belov2019onthe} (see also
\cite{kollar2020pellsurfaces}) that the existence of an embedding
$X\hookrightarrow Y$ of varieties over $\RR$ or $\overline{\QQ}$
is undecidable. 

The rest of the paper is organized as follows. In Section \ref{sec:Preliminaries},
we set up our basic convention. In particular, we clarify what we
mean by saying that some object (for example, a scheme or an invertible
sheaf) is given. In Section \ref{sec:Semidecidability Iso Prob},
we show that the isomorphism problem of projective schemes is semi-decidable.
In Section \ref{sec:Hilbert-schemes}, we explain how to compute the
Hilbert scheme for each polynomial. In Section \ref{sec:Projective-equaivalence},
we apply computation of the Hilbert scheme to show that it is decidable
whether two projective schemes embedded in the same projective space
is projectively equivalent. Although eash result in sections \ref{sec:Semidecidability Iso Prob}
to \ref{sec:Projective-equaivalence} would be known to specialists,
we include them for the sake of reader's convenience. The reader who
knows these materials well may skip these sections. In Section \ref{sec:Hom and Iso schemes},
we explain how to compute the Hom scheme and the Iso scheme for each
polynomial. In Section \ref{sec:One-dimensional-schemes}, we show
the decidability of the isomorphism problem for one-dimensional projective
schemes and the one for one-dimensional quasi-projective reduced schemes.
In Section \ref{sec: genetal type Fano}, we do the same for the case
of smooth irreducible varieties with a big canonical sheaf or a big
anti-canonical sheaf. In Section \ref{sec:Computing-intersection-numbers},
we explain how to compute intersection numbers on a smooth irreducible
projective variety. In Section \ref{sec:Positivity}, we discuss decidability
of positivity properties of invertible sheaves. In particular, we
show that ampleness of an invertible sheaf on a smooth variety is
decidable and that the nef cone and the pseudo-effective cone are
approximated by rational polyhedral cones with arbitrary precision.
In Section \ref{sec: K3}, we discuss the isomorphism problem for
K3 surfaces as well as smooth varieties with a rational polyhedral
nef cone.

\subsection*{Acknowledgments}

The author is grateful to Bjorn Poonen and Burt Totaro for pointing
out references and for valuable suggestions. He would also like to
thank Ichiro Shimada, Sho Ejiri, and an anonymous referee for helpful
comments.

\section{\label{sec:Preliminaries}Preliminaries}

Throughout the paper, we work over the field of complex algebraic
numbers, $\overline{\QQ}\subset\CC$, which is denoted by $k$. As
explained in \cite[Section 2.1]{simpson2008algebraic}, elements of
this field are expressed by finite data and four basic arithmetic
operations on them, addition, subtraction, multiplication and division,
are algorithmically computable. We can also algorithmically decide
whether or not two expressions give the same number. It follows that
we can also express polynomials with coefficients in $k$ by finite
data and algorithmically compute their addition, subtraction and multiplication.
We can also compute the Gröbner basis of an ideal in a polynomial
ring $k[x_{1},\dots,x_{m}]$. Thus we can also make various computation
based on the Gröbner basis. For example, we can algorithmically check
whether or not an ideal is contained in another ideal in the same
polynomial ring (this is an application of the ideal membership test;
see \cite[15.10.1]{eisenbud1995commutative}). We also note that the
elements of $k$ as well as the elements of $k[x_{1},\dots,x_{m}]$
are enumerable. 

When we say that a projective scheme $X$ is given, we mean that we
are given finitely many homogeneous polynomials $f_{1},\dots,f_{l}\in k[x_{1},\dots,x_{m}]$
such that $X$ is the closed subscheme of $\PP^{m-1}=\Proj k[x_{1},\dots,x_{m}]$
defined by the ideal $(f_{1},\dots,f_{l})$. In particular, we are
given an embedding $\iota\colon X\hookrightarrow\PP^{m-1}$ into a
projective space, the induced very ample invertible sheaf $\iota^{*}\cO_{\PP^{m-1}}(1)$
and the homogeneous coordinate ring $R_{X}:=k[x_{1},\dots,x_{m}]/(f_{1},\dots,f_{l})$,
which is also denoted by $R$ omitting the subscript $X$. From these
data, we can compute the standard affine charts $X_{i}:=X\cap\{x_{i}\ne0\}$
for $1\le i\le m$, which cover $X$. Indeed, if $k[x_{1},\dots,\check{x_{i}},\dots,x_{m}]$
is the polynomial ring with $x_{i}$ removed and if $f_{j}^{(i)}$
denotes the polynomial obtained from $f_{j}$ by substituting $1$
for $x_{i}$, then $X_{i}$ is the closed subscheme of $\AA^{m-1}=\Spec k[x_{1},\dots,\check{x_{i}},\cdots,x_{m}]$
defined by $f_{1}^{(i)},\dots,f_{l}^{(i)}$. 

For a projective scheme $X\subset\PP^{m-1}$ defined by $f_{1},\dots,f_{l}$,
we suppose that every coherent sheaf of $X$ (in particular, an invertible
sheaf) is represented by a finitely generated module over $R=R_{X}$.
In turn, every finitely generated $R$-module is represented by a
matrix $A\in\M_{r\times s}(R)$ which defines a free presentation
of $M$,
\[
\bigoplus_{j=1}^{s}R(b_{j})\xrightarrow{A}\bigoplus_{i=1}^{r}R(a_{i})\to M\to0.
\]
Here maps are supposed to be degree-preserving and $R(a)$ denotes
the graded free $R$-module of rank one defined by $R(a)_{c}=R_{a+c}$. 

When two projective schemes $X\subset\PP^{m-1}$ and $Y\subset\PP^{n-1}$
are given, we can embed the product $X\times Y$ into $\PP^{mn-1}$
via the Segre embedding $\PP^{m-1}\times\PP^{n-1}\hookrightarrow\PP^{mn-1}$.
When we say that a morphism $f\colon X\to Y$ is given, we mean that
its graph $\Gamma_{f}\subset X\times Y$ is given as a closed subscheme
of $\PP^{mn-1}$. 

\section{\label{sec:Semidecidability Iso Prob}Semi-decidability of the isomorphism
problem of projective schemes}

In this section, we show the probably well-known fact that the isomorphism
problem of projective schemes is semi-decidable; there exists an algorithm
such that, when two projective schemes are given as an input, then
the algorithm stops after finitely many steps if and only if these
schemes are isomorphic. The algorithm given in this section is a very
naive one and would be very inefficient. An approach via Iso schemes
would give a more efficient algorithm (see Remark \ref{rem:2nd proof semidecidability}).
\begin{rem}
Poonen pointed out to the author that the isomorphism problem of finite-type
$k$-schemes is also semi-decidable and it appears well-known. Roughly,
the proof is by checking whether the given schemes have the ``same''
affine open coverings. Our proof below for projective schemes is more
along our basic strategy in terms of graphs. Arguments in it will
be repeated in computation of Iso schemes in Section \ref{sec:Hom and Iso schemes}. 
\end{rem}

Let $X\subset\PP^{m-1}$ and $Y\subset\PP^{n-1}$ be projective schemes.
We first enumerate all the closed subschemes of $X\times Y$. To do
so, we enumerate all the finite sequences $f_{1},\dots,f_{l}$ of
homogeneous polynomials in $k[w_{ij}\mid1\le i\le m,\,1\le j\le n]$.
For each positive integer $i$, let $I_{i}$ be the ideal generated
by the $i$-th sequence and let $Z_{i}\subset\PP^{nm-1}$ be the closed
subscheme corresponding to $I_{i}$. Thus we obtain the sequence $Z_{i}$,
$i>0$ of closed subschemes such that for every closed subscheme $Z\subset\PP^{nm-1}$,
there exists $i>0$ such that $Z=Z_{i}$. For each $i$, we can check
whether or not $Z_{i}$ is included in $X\times Y$. Removing the
ones not included in $X\times Y$, we can algorithmically produce
every closed subscheme of $X\times Y$ one by one. If we prefer, we
may remove redundancies to get a sequence where every closed subscheme
of $X\times Y$ appears exactly once. We let $Z_{i}$, $i>0$ be thus
obtained sequence of closed subschemes of $X\times Y$. 
\begin{prop}
\label{prop: iso semidecidable}The isomorphism problem of projective
schemes is semi-decidable
\end{prop}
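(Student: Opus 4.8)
The goal is to show semi-decidability: an algorithm that halts precisely when $X \cong Y$. Given the infrastructure already in place—the enumeration $Z_i$, $i>0$ of all closed subschemes of $X \times Y$—the natural strategy is to search through this list for the graph of an isomorphism. The plan is to run a diagonal search over the $Z_i$, and for each one test whether it is the graph $\Gamma_f$ of some isomorphism $f\colon X \xrightarrow{\sim} Y$. If we find such a $Z_i$, we halt and report that $X$ and $Y$ are isomorphic; if no isomorphism exists, the search runs forever. The correctness of the ``only if'' direction is immediate: if $X \cong Y$, then some isomorphism exists, its graph is a closed subscheme of $X \times Y$, hence equals some $Z_i$, and the search will eventually reach it and halt.

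**The core sub-problem.**

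The real content is the subroutine: given a closed subscheme $Z \subset X \times Y$, decide whether $Z$ is the graph of an isomorphism $X \xrightarrow{\sim} Y$. First I would express $Z$ as a subscheme of $\PP^{mn-1}$ via the Segre embedding, so that it is cut out by an explicit homogeneous ideal, which is the form in which the enumeration hands us $Z_i$. A subscheme $Z \subset X \times Y$ is the graph of a morphism $X \to Y$ exactly when the two projections $p\colon Z \to X$ and $q\colon Z \to Y$ satisfy that $p$ is an isomorphism; then $f := q \circ p^{-1}$ is the morphism, and $f$ is itself an isomorphism precisely when $q$ is also an isomorphism. So the test reduces to: \emph{is each of the two projections $p\colon Z \to X$ and $q\colon Z \to Y$ an isomorphism of schemes?} Since $X$, $Y$, $Z$ are all explicitly presented projective schemes and the projections are explicit morphisms (restrictions of the Segre-coordinate projections), this is where the algorithmic work concentrates.

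**Reducing ``is a morphism an isomorphism'' to computable checks.**

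To decide whether a given morphism $g\colon Z \to X$ of projective schemes is an isomorphism, I would pass to the standard affine charts $X_i = X \cap \{x_i \ne 0\}$ constructed in Section~\ref{sec:Preliminaries}, pull them back to open charts of $Z$, and on each chart reduce the question to commutative algebra over $k$-algebras of finite type. Concretely, checking that $g$ induces an isomorphism on each chart amounts to checking that a homomorphism of finitely presented $k$-algebras is an isomorphism: one tests surjectivity and injectivity of the comparison map between coordinate rings. Both reduce to Gröbner-basis computations and ideal-membership tests, which the paper has already declared algorithmically available over $k = \overline{\QQ}$; surjectivity is decided by checking that each coordinate generator lies in the image subalgebra, and injectivity by comparing kernels of presentations. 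One must also verify the charts are compatibly covered so that isomorphism on charts glues to a global isomorphism, but this is a finite bookkeeping check. Assembling these subroutines gives a terminating test for each fixed $Z_i$, and feeding it into the outer diagonal search over $i$ proves semi-decidability.

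**Main obstacle.**

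The genuine difficulty is the chart-level algebra check—deciding whether an explicit $k$-algebra homomorphism $R_X \to R_Z$ (after localizing to affine charts) is an isomorphism—since surjectivity of a ring map onto a subalgebra generated by given elements requires a subalgebra-membership test, a mildly more delicate Gröbner computation than plain ideal membership. Everything else (the enumeration, the Segre setup, the gluing bookkeeping, and the outer search) is routine given the tools already set up in the preliminaries, so I expect the proof to spend most of its effort making the projection-isomorphism test precise and algorithmic.
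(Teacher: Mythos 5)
Your outer structure is exactly the paper's: enumerate all closed subschemes $Z_i\subset X\times Y$ (via homogeneous ideals under the Segre embedding), and for each one test whether it is the graph of an isomorphism by asking whether both projections $Z_i\to X$ and $Z_i\to Y$ are isomorphisms. The gap is in your subroutine for that test. You propose to pull the standard affine charts $X_j$ back to ``open charts of $Z$'' and then decide whether an explicit homomorphism of finitely presented $k$-algebras is an isomorphism. But the preimage $g^{-1}(X_j)=Z\cap(X_j\times Y)$ is a closed subscheme of $\PP^{n-1}_{R_j}$, i.e.\ a \emph{projective} scheme over the affine chart $X_j$; it is not affine, and it has no coordinate ring to compare with $R_j$ by Gr\"obner-basis methods. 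It becomes affine precisely when the restriction of $g$ over $X_j$ is an affine (equivalently, here, finite) morphism --- which is part of what you are trying to verify, so the reduction is circular as stated. Intersecting with the standard charts of $Z$ itself does not repair this, since $g^{-1}(X_j)$ is then covered by several affines whose images in $X_j$ need not be open or affine, and chart-by-chart ring maps do not detect whether the whole preimage maps isomorphically onto $X_j$. Consequently your identified ``main obstacle'' (subalgebra membership) is not the real one.

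The paper resolves exactly this point with sheaf-theoretic, not ring-theoretic, tests. For $f_j\colon\Gamma_j\to X_j$ it first computes the relative cotangent sheaf $\Omega_{\Gamma_j/X_j}$ (computable for a projective scheme over any commutative ring, Remark \ref{rem:cotangent}) and checks whether it vanishes: if not, $f_j$ is not an isomorphism; if so, $f_j$ is unramified, and being also proper it is \emph{finite}. Only at that point does an $R_j$-module enter: one computes $M_j$ with $\widetilde{M_j}=(f_j)_*\cO_{\Gamma_j}$ (Remark \ref{rem:push}) and checks three algorithmic conditions --- $\Supp(M_j)=X_j$, $V(\Fitt_1(M_j))=\emptyset$, and a Tor-vanishing (flatness) condition --- which together say $f_j$ is surjective, finite, and flat of constant rank one, i.e.\ an isomorphism. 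Your proof needs this (or some equivalent device for projective morphisms, such as first certifying finiteness) inserted where you currently assume the preimages of affine charts are affine.
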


\begin{proof}
For each integer $i>0$, from Lemma \ref{lem:iso decidable} below,
we can algorithmically check whether $Z_{i}$ is the graph of an isomorphism
$X\xrightarrow{\sim}Y$. As soon as one finds that this is the case,
we stop this algorithm.
\end{proof}
\begin{lem}
\label{lem:iso decidable}We can algorithmically check whether or
not a given closed subscheme $\Gamma\subset X\times Y$ is the graph
of an isomorphism $X\xrightarrow{\sim}Y$. 
\end{lem}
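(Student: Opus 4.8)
The statement asks: given a closed subscheme $\Gamma\subset X\times Y$, decide algorithmically whether $\Gamma$ is the graph of an isomorphism $X\isoto Y$. A subscheme $\Gamma$ is the graph of an isomorphism exactly when the two projections $p\colon\Gamma\to X$ and $q\colon\Gamma\to Y$ (restrictions to $\Gamma$ of the natural projections $X\times Y\to X$ and $X\times Y\to Y$) are both isomorphisms. So I would reduce the problem to checking that $p$ and $q$ are each isomorphisms, and further reduce ``$p$ is an isomorphism'' to the algebraic condition that a certain ring homomorphism between homogeneous coordinate rings (or, locally, between affine coordinate rings) is an isomorphism.

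\emph{First steps.} From the data of $\Gamma$ (finitely many homogeneous polynomials in the Segre coordinates $w_{ij}$ cutting it out inside $\PP^{mn-1}$) I would compute the ideals and coordinate rings of the relevant schemes. Concretely, I would work on the standard affine charts: $X$ is covered by the charts $X_i=X\cap\{x_i\ne0\}$ computed in Section~\ref{sec:Preliminaries}, and similarly $Y$ by charts $Y_j$. On each product chart $X_i\times Y_j$ the subscheme $\Gamma$ restricts to an affine scheme whose coordinate ring is a computable quotient of a polynomial ring (obtained by dehomogenizing the defining polynomials of $\Gamma$). The projection $p$ corresponds, on the level of these affine coordinate rings, to an explicit ring homomorphism from the coordinate ring of $X_i$ into the coordinate ring of $\Gamma\cap(X_i\times Y_j)$; all of these homomorphisms are given by substitution of polynomials and are therefore computable, with presentations one can write down using Gröbner bases. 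The key reduction is thus: $p$ is an isomorphism of schemes if and only if it is surjective (i.e.\ $\Gamma\to X$ covers $X$) and each of these affine ring homomorphisms is an isomorphism.

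\emph{The algebraic tests.} Checking that a given homomorphism $\varphi\colon A\to B$ of finitely presented $k$-algebras (here $A$ the coordinate ring of a chart of $X$, $B$ that of the corresponding chart of $\Gamma$) is an isomorphism is decidable by Gröbner-basis methods. Surjectivity is an ideal-membership/subalgebra-membership computation: one checks that each generator of $B$ lies in the image, by eliminating the variables of $A$ and testing membership as in \cite[15.10.1]{eisenbud1995commutative}. Injectivity amounts to computing the kernel of $\varphi$ (again an elimination computation) and testing that it is zero. Assembling these local tests, together with a check that the charts actually cover (so that $p$ is surjective as a map of schemes, not merely dominant), I can decide whether $p\colon\Gamma\to X$ is an isomorphism; running the symmetric procedure decides whether $q\colon\Gamma\to Y$ is an isomorphism. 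Since $\Gamma$ is the graph of an isomorphism $X\isoto Y$ precisely when both $p$ and $q$ are isomorphisms, combining the two decisions settles the lemma.

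\emph{Main obstacle.} The conceptual content is light, but the bookkeeping is the delicate part: I must be careful that the \emph{scheme-theoretic} conditions (surjectivity, and isomorphy including nilpotents, since $X$ and $Y$ are arbitrary projective schemes and need not be reduced) are correctly captured by the affine algebraic tests, and that the chart-by-chart checks glue to a genuine global statement. In particular, verifying isomorphy on each overlapping chart and confirming that the charts cover both source and target is where one must take care that no point or embedded/nilpotent structure is missed; the computations themselves (Gröbner bases, elimination, ideal membership) are all standard and available over $k=\overline{\QQ}$ by the conventions of Section~\ref{sec:Preliminaries}.
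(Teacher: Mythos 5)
There is a genuine gap, and it sits exactly at the point you dismiss as ``bookkeeping.'' Your key reduction --- ``$p$ is an isomorphism if and only if it is surjective and each ring homomorphism $k[X_i]\to k[\Gamma\cap(X_i\times Y_j)]$ is an isomorphism'' --- is false in the ``only if'' direction, so the algorithm built on it returns false negatives. If $\Gamma$ really is the graph of an isomorphism $f\colon X\to Y$, then the projection identifies $\Gamma\cap(X_i\times Y_j)$ with the open subscheme $X_i\cap f^{-1}(Y_j)$ of $X_i$, which is in general a \emph{proper} open subscheme; the corresponding ring map is an open immersion on spectra, not an isomorphism. Concretely, take $X=Y=\PP^1$ and $\Gamma$ the diagonal: on the chart $X_0\times Y_1$ one gets the map $k[t]\to k[t,t^{-1}]$, which is not surjective, so your test rejects the graph of the identity. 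The obstruction you would have to overcome is that isomorphism is a condition local on the \emph{target}: over a chart $X_i$ the relevant object is $p^{-1}(X_i)=\Gamma\cap(X_i\times Y)$, which is projective over $X_i$ and covered by the $n$ pieces $\Gamma\cap(X_i\times Y_j)$, so there is no single coordinate-ring homomorphism to test, and testing the pieces one at a time tests the wrong condition. (Repairing your scheme would require deciding ``open immersion'' chart-by-chart together with a covering condition on the images, which is a genuinely different and more delicate criterion than the one you state.)

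The paper resolves precisely this difficulty. Working over a chart $X_j$ with coordinate ring $R_j$, it first computes the relative cotangent sheaf $\Omega_{\Gamma_j/X_j}$ of the projective morphism $f_j\colon\Gamma_j\to X_j$ (Remark \ref{rem:cotangent}) and checks whether it vanishes, i.e.\ whether $f_j$ is unramified; unramified plus projective forces $f_j$ to be finite, hence affine. Only then does a module-theoretic test make sense: one computes an $R_j$-module $M_j$ with $\widetilde{M_j}=(f_j)_*\cO_{\Gamma_j}$ (Remark \ref{rem:push}) and checks three computable conditions --- $\Supp(M_j)=X_j$, $V(\Fitt_{1}(M_j))=\emptyset$, and a Tor-vanishing condition against $(R_j)_{\red}$ --- which together say that $f_j$ is surjective and flat of constant rank one, i.e.\ an isomorphism. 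The pushforward computation is the step your proposal is missing: it replaces the nonexistent ``coordinate ring of $p^{-1}(X_i)$'' that your Gr\"obner-basis tests would need. Your surrounding remarks (reduction to the two projections, computability of dehomogenized presentations, Gr\"obner machinery over $k=\Qbar$) are fine, but the central criterion must be reworked along these lines.
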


\begin{proof}
We need to check whether the two projections $\Gamma\to X$ and $\Gamma\to Y$
are both isomorphisms. We discuss only the former projection, denoting
it by $f$. Let $X=\bigcup X_{j}$ be the standard affine open covering
and let $R_{j}$ be the coordinate ring of $X_{j}$. Let $\Gamma_{j}$
be the preimage of $X_{j}$ by the morphism $\Gamma\to X$, which
is a closed subscheme of $\PP_{R_{j}}^{n-1}=\Proj R_{j}[y_{1},\dots,y_{n}]$.
The morphism $f\colon\Gamma\to X$ is an isomorphism if and only if
every 
\[
f_{j}:=f|_{\Gamma_{j}}\colon\Gamma_{j}\to X_{j}
\]
is an isomorphism. We can compute the coherent sheaf $\Omega_{\Gamma_{j}/X_{j}}$
of differentials (see Remark \ref{rem:cotangent}) and check whether
or not it is the zero sheaf. Thus we can algorithmically check whether
or not $f_{j}$ is unramified. If it is not unramified, then it is
not an isomorphism. Suppose that $f_{j}$ is unramified. Then it is
also a finite morphism. We compute an $R_{j}$-module $M_{j}$ such
that $\widetilde{M_{j}}:=(f_{j})_{*}\cO_{\Gamma_{j}}$ (see Remark
\ref{rem:push}). Consider the following three conditions:
\begin{enumerate}
\item $\Supp(M_{j})=X_{j}$.
\item $V(\Fitt_{1}(M_{j}))=\emptyset$, where $\Fitt_{1}(M_{j})$ denote
the first Fitting ideal of $M_{j}$.
\item $\mathrm{Tor}^{R_{j}}((R_{j})_{\red},M_{j}\otimes(R_{j})_{\red})=0$.
\end{enumerate}
The second condition means that for every point $x\in X_{j}$, the
stalk $(\widetilde{M_{j}})_{x}$ is generated by one element as an
$\cO_{X_{j},x}$-module. From \cite[II, Exercise 5.8]{hartshorne1977algebraic},
the first two conditions together are equivalent to that $M_{j}\otimes(R_{j})_{\red}$
is a flat $(R_{j})_{\red}$-module of constant rank one. Under these
conditions, the third condition means that $M_{j}$ is a flat $R_{j}$-module
(see \cite[Th.\ 22.3]{matsumura1989commutative} or \cite[tag 051C]{thestacksprojectauthors2021thestacks}).
We conclude that these three conditions all hold if and only if the
finite unramified morphism $f_{j}\colon\Gamma_{j}\to X_{j}$ is surjective
and flat of constant rank one, that is, an isomorphism. 
\end{proof}
\begin{rem}[{cf.~\cite[Prop. 5.7]{stillmancomputing}}]
\label{rem:cotangent}Let $R$ be a commutative ring and let 
\[
X=\Proj R[x_{1},\dots,x_{r}]/(f_{1},\dots,f_{l})
\]
be a projective scheme over $R$, where $f_{1},\dots,f_{l}$ are homogeneous
polynomials. Then the cotangent sheaf $\Omega_{X/R}$ is associated
to the homology module of the sequence
\[
\bigoplus_{i=1}^{l}S(-\deg f_{i})\xrightarrow{(\partial f_{i}/\partial x_{j})_{i,j}}S(-1)^{\oplus r}\xrightarrow{(x_{1}\cdots x_{r})}S.
\]
Stillman's notes cited above treat the case where $R$ is a field,
but this lemma holds for an arbitrary $R$. This is a straightforward
consequence of \cite[Prop.\ 8.12 and Th.\ 8.13]{hartshorne1977algebraic}. 
\end{rem}

\begin{rem}
\label{rem:push}Let $X\subset\PP_{R}^{n}$ be a projective scheme
over a commutative ring $R$, let $\pi\colon X\to\Spec R$ be the
structure morphism and let $\cM$ be a coherent sheaf on $X$. Algorithms
computing the pushfoward $\pi_{*}\cM$ (and more generally, higher
direct images $R^{i}\pi_{*}\cM$) are explained in \cite{smith1998computing,eisenbud2008relative}. 
\end{rem}

\section{Hilbert schemes\label{sec:Hilbert-schemes}}

In this section, we discuss how to compute Hilbert schemes of general
projective schemes and their universal families. Their computability,
Proposition \ref{prop:univ fam}, has been already proved in \cite[Lem.\ 8.23]{poonen2015computing}.
As this result is the core of our approach, we explain it in more
details below. 

\subsection{The Hilbert scheme of a projective space\label{subsec: Hilb P}}

Bayer \cite{bayer1982thedivision} explained how to compute equations
defining the Hilbert scheme $\Hilb_{P}(\PP^{r-1})$ for each Hilbert
polynomial $P$ as a closed subset of a Grassmaniann variety. It turned
out that his equations also give the right scheme structure of the
Hilbert scheme. We recall this description of the Hilbert scheme $\Hilb_{P}(\PP^{r-1})$,
closely following the presentation by Iarrobino and Kleiman in \cite[Appendix C]{iarrobino1999sumsof}
but with emphasis on algorithmic aspects.

Throughout this section, we fix a positive integer $r>0$. Let $R:=k[x_{1},\dots,x_{r}]=\bigoplus_{i\ge0}R_{i}$
with $R_{i}$ denoting the degree-$i$ part and let $\PP^{r-1}=\Proj R$,
the $(r-1)$-dimensional projective space. For a closed subscheme
$Z\subset\PP^{r-1}$ defined by a homogeneous ideal $I\subset R$,
the \emph{Hilbert polynomial} of $Z$ is a polynomial $P\in\QQ[t]$
such that $P(i)=\dim_{k}(R/I)_{i}$ for $i\gg0$, where $(R/I)_{i}$
denotes the degree $i$ part of the graded ring $R/I$. A polynomial
is said to be a \emph{Hilbert polynomial} if it is the Hilbert polynomial
of some closed subscheme $Z\subset\PP^{r-1}$. For each Hilbert polynomial
$P$, the \emph{Hilbert scheme} $\Hilb_{P}(\PP^{r-1})$ for $\P$
is the moduli scheme of closed subschemes $Z\subset\PP^{r-1}$ with
the Hilbert polynomial $P$. 

For a Hilbert polynomial $P$, there exists a unique sequence of positive
integers, $0<a_{0}\le\cdots\le a_{k}$ such that $0\le k\le r-2$
and 
\[
\binom{r+t-1}{r-1}-P(t)=\binom{t-a_{0}+r-1}{r-1}+\cdots+\binom{t-a_{k}+r-1-k}{r-1-k}.
\]
We can algorithmically compute these integers $a_{0},\dots,a_{k}$
from the polynomial $P$. The \emph{Gotzmann number }$\varphi(P)$
of $P$ is defined to be $a_{k}$. 

We now fix a Hilbert polynomial $P$ and an integer $d\ge\varphi(P)$.
Let $r_{d}:=\dim_{k}R_{d}$, $p:=P(d)$ and $p^{\vee}:=r_{d}-p$ .
Let $\Grass_{p^{\vee}}(R_{d})$ be the Grassmannian parameterizing
$p^{\vee}$-dimensional subspaces of the $r_{d}$-dimensional vector
space $R_{d}$. There exists a closed embedding
\begin{align*}
\Hilb_{P}(\PP^{r-1}) & \hookrightarrow\Grass_{p^{\vee}}(R_{d}),\\{}
[Z] & \mapsto[(I_{Z})_{d}]
\end{align*}
where $I_{Z}\subset R$ is the saturated ideal of $Z$ and $(I_{Z})_{d}$
is its degree-$d$ part. In particular, the closed subscheme $Z$
is recovered from the subspace $(I_{Z})_{d}\subset R_{d}$. Indeed
$Z$ is defined by the ideal $((I_{Z})_{d})\subset R$ generated by
$(I_{Z})_{d}$. 

Let $M_{d}$ be the set of the monomials of degree $d$, which is
a basis of $R_{d}$. The Grassmannian $\Grass_{p^{\vee}}(R_{d})$
has the standard affine open covering 
\[
\Grass_{p^{\vee}}(R_{d})=\bigcup_{\substack{K\subset M_{d}\\
\sharp K=p
}
}U_{K}.
\]
Each affine chart $U_{K}$ is isomorphic to the $p\cdot p^{\vee}$-dimensional
affine space $\AA^{p\cdot p^{\vee}}$. In what follows, we identify
$M_{d}$ with $\{1,2,\dots,r_{d}\}$ say by the lex order. Then, the
affine chart $U_{K}=\AA^{p\cdot p^{\vee}}$ is the space of $r_{d}$-by-$p^{\vee}$
matrices 
\begin{equation}
A=(a_{i,j})_{i\in M_{d},1\le j\le p^{\vee}}\label{eq:mat A}
\end{equation}
such that the $p^{\vee}$-by-$p^{\vee}$ submatrix 
\[
(a_{i,j})_{i\in M_{d}\setminus K,1\le j\le p^{\vee}}
\]
is the identity matrix. Note that the $j$-th column of $A$ corresponds
to the homogeneous polynomial $\sum_{i=1}^{r_{d}}a_{ij}m_{i}$, where
$m_{i}$ denotes the $i$-th monomial in $M_{d}$. Thus we can write
the coordinate ring of $U_{K}$ as 
\[
k[U_{K}]=k[u_{i,j}\mid i\in K,1\le j\le p^{\vee}],
\]
where $u_{i,j}$ are indeterminates corresponding to entries $a_{i,j}$
above, respectively. For example, if $K$ consists of the last $p$
monomials in $M_{d}$, then a matrix $A$ as above is of the form:
\[
A=\begin{pmatrix}1 &  & 0\\
 & \ddots\\
0 &  & 1\\
a_{p^{\vee}+1,1} & \cdots & a_{p^{\vee}+1,p^{\vee}}\\
\vdots & \ddots & \vdots\\
a_{r_{d},1} & \cdots & a_{r_{d},p^{\vee}}
\end{pmatrix}
\]
The $p\cdot p^{\vee}$ free entries in the last $p$ rows serve as
coordinates of the affine space $U_{K}$. For general $K$, if we
write $A=(\ba_{1}\cdots\ba_{p^{\vee}})$ with column vectors $\ba_{i}$,
then the matrix $A$ corresponds to the subspace $\langle\ba_{1},\dots,\ba_{p^{\vee}}\rangle\subset R_{d}$.
A matrix as above is also regarded as a linear map 
\[
k^{p^{\vee}}\to k^{r_{d}}=R_{d}.
\]
For each $i$ with $1\le i\le r$, let $B_{i}$ be the $r_{d+1}$-by-$p^{\vee}$
matrix corresponding to the composite map 
\[
k^{p^{\vee}}\xrightarrow{A}R_{d}\xrightarrow{\times x_{i}}R_{d+1}=k^{r_{d+1}}.
\]
These matrices are easily computed from $A$. Indeed their nonzero
entries are the ones of $A$ suitably arranged. Finally we define
the $r_{d+1}$-by-$rp^{\vee}$ matrix 
\[
B:=(B_{1}|\cdots|B_{r})
\]
by lining $B_{i}$'s horizontally. If $V_{A}\subset R_{d}$ is the
subspace corresponding to $A$, then the image of $B$ regarded as
the map $k^{rp^{\vee}}\to R_{d+1}$ is nothing but $R_{1}\cdot V_{A}\subset R_{d+1}$.
We have that the point $[V_{A}]\in\Grass_{p^{\vee}}(R_{d})$ lies
in $\Hilb_{P}(\PP^{r-1})$ if and only if the inequality 
\[
\dim_{k}R_{1}\cdot V_{A}\le r_{d+1}-P(d+1)=:q^{\vee}
\]
holds. Note that the inequality is equivalent to the equality, since
the opposite inequality $\ge$ always holds. Thus, on the affine chart
$U_{K}\subset\Grass_{p^{\vee}}(R_{d})$, the Hilbert scheme $\Hilb_{P}(\PP^{r-1})$
is cut out, at least set-theoretically, by the $(q^{\vee}+1)$-by-$(q^{\vee}+1)$
minors of the matrix $B$; each such minor is a polynomial in coordinates
$a_{i,j}$ of $U_{K}=\AA^{p\cdot p^{\vee}}$. It turns out that this
is the case also scheme-theoretically. Precisely:
\begin{prop}
\label{prop:Hilb eqs}The closed subscheme $\Hilb_{P}(\PP^{r-1})\cap U_{K}$
of $U_{K}=\AA^{p\cdot p^{\vee}}$ is defined by the $(q^{\vee}+1)$-by-$(q^{\vee}+1)$
minors of the matrix $B$. 
\end{prop}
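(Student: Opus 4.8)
The plan is to prove the equality of the two closed subschemes of $U_{K}=\AA^{p\cdot p^{\vee}}$ by showing that their defining ideals coincide. Write $H:=\Hilb_{P}(\PP^{r-1})\cap U_{K}$ for the Hilbert scheme chart and $W$ for the closed subscheme defined by the $(q^{\vee}+1)$-by-$(q^{\vee}+1)$ minors of $B$. On $U_{K}$ we have the universal subbundle $\cV\subset R_{d}\otimes\cO_{U_{K}}$ (the column span of the universal matrix $A$), of rank $p^{\vee}$ with locally free quotient of rank $p$; it generates a graded sheaf of ideals whose degree-$(d+1)$ part is the image of $B$, so that $Q:=\Coker(B)=(R_{d+1}\otimes\cO_{U_{K}})/R_{1}\cdot\cV$. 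Since the $(q^{\vee}+1)$-minors of $B$ generate the Fitting ideal $\Fitt_{P(d+1)-1}(Q)$, the subscheme $W$ is exactly the vanishing locus of this Fitting ideal. I will prove $I(W)\subseteq I(H)$ and $I(H)\subseteq I(W)$ separately; together these give $W=H$.

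For $I(W)\subseteq I(H)$ I would use flatness of the universal family over $H$. By construction $H$ carries the universal closed subscheme $\cZ\subset\PP_{H}^{r-1}$, flat over $H$ with Hilbert polynomial $P$. Because $d\ge\varphi(P)$, Gotzmann regularity guarantees that the saturated ideal of each fibre is $d$-regular, so fibrewise $R_{1}\cdot(I_{\cZ})_{d}=(I_{\cZ})_{d+1}$ and, by flatness together with cohomology-and-base-change, the sheaf $Q|_{H}$ is identified with the locally free sheaf $(R/I_{\cZ})_{d+1}$ of rank $P(d+1)$. A locally free module of rank $P(d+1)$ has vanishing $(P(d+1)-1)$-st Fitting ideal, so the minors of $B$ vanish on $H$ scheme-theoretically; this is precisely the inclusion $I(W)\subseteq I(H)$.

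The reverse inclusion $I(H)\subseteq I(W)$ is the substantive direction and amounts to producing a morphism $W\to\Hilb_{P}(\PP^{r-1})$ refining the set-theoretic containment. First I would show that $Q|_{W}$ is locally free of rank $P(d+1)$: the defining equations of $W$ give $\Fitt_{P(d+1)-1}(Q|_{W})=0$, while the pointwise Macaulay bound $\dim_{\kappa}R_{1}\cdot V\ge q^{\vee}$ — valid over every residue field $\kappa$ of $W$, since each fibre $\cV\otimes\kappa$ has codimension $p$ in $R_{d}\otimes\kappa$ — forces $\Fitt_{P(d+1)}(Q|_{W})=\cO_{W}$; the standard Fitting-ideal criterion for local freeness then applies. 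With $Q|_{W}$ locally free of the correct rank at degree $d+1$, I would invoke Gotzmann's persistence theorem in its relative (flat-family) form to propagate local freeness of $(R/\cI)_{d+j}$ of rank $P(d+j)$ to all $j\ge0$, where $\cI$ is the ideal sheaf generated by $\cV$ over $W$. This shows that $\cI$ cuts out a closed subscheme $\cZ_{W}\subset\PP_{W}^{r-1}$ that is $W$-flat with constant Hilbert polynomial $P$, hence defines a morphism $W\to\Hilb_{P}(\PP^{r-1})$ by the universal property. By construction its composite with $\Hilb_{P}(\PP^{r-1})\hookrightarrow\Grass_{p^{\vee}}(R_{d})$ is the inclusion $W\hookrightarrow U_{K}$, so this inclusion factors through the closed immersion $H\hookrightarrow U_{K}$, giving $I(H)\subseteq I(W)$.

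The main obstacle is the relative persistence step in the last paragraph: one must upgrade Gotzmann's persistence theorem from statements about dimensions of graded pieces over a field to statements about local freeness of the graded pieces of $R/\cI$ over an arbitrary, possibly non-reduced, base $W$, since it is precisely the behaviour over Artinian test rings that pins down the nilpotents in the scheme structure and distinguishes $W$ from a merely set-theoretic description. I would carry this out by combining the fibrewise persistence bound with the local freeness of $Q|_{W}$ through the local criterion for flatness, propagating degree by degree, and then conclude flatness of $\cZ_{W}\to W$ from constancy of the Hilbert polynomial together with $d$-regularity.
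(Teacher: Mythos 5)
Your overall route is correct, and it is in fact the argument behind the sources on which the paper itself relies: the paper offers no proof of Proposition \ref{prop:Hilb eqs}, but recalls it from Bayer and from Iarrobino--Kleiman's Appendix C, where the statement that Bayer's set-theoretic equations give the right scheme structure is Gotzmann's theorem. Your two inclusions reproduce exactly that treatment: $H\subseteq W$ via Gotzmann regularity of the fibres, cohomology-and-base-change, and the vanishing of $\Fitt_{P(d+1)-1}$ of a locally free sheaf of rank $P(d+1)$; and $W\subseteq H$ by showing $Q|_{W}$ is locally free of rank $P(d+1)$ (the Fitting-ideal criterion, with Macaulay's bound forcing $\Fitt_{P(d+1)}(Q)=\cO$ on all of $U_{K}$), then relative persistence, flatness of $\cZ_{W}\to W$, and the universal property of the Hilbert scheme. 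Provided you take Gotzmann's persistence theorem over an arbitrary Noetherian base as a known input --- and it is exactly the ``condition for flatness'' that Gotzmann proved, in the relative form presented by Iarrobino--Kleiman --- your proof is complete and agrees with the cited one.

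The one genuine soft spot is your final paragraph, where you propose to derive the relative persistence yourself by ``combining the fibrewise persistence bound with the local freeness of $Q|_{W}$ through the local criterion for flatness, propagating degree by degree.'' As stated, this is circular. Over an Artinian local base $(A,\fm,\kappa)$, suppose $(R_{A}/\cI)_{e}$ is free of rank $P(e)$ and, by field-case persistence, $\dim_{\kappa}(R_{\kappa}/\bar{\cI})_{e+1}=P(e+1)$. Right-exactness of tensor gives $(R_{A}/\cI)_{e+1}\otimes\kappa\cong(R_{\kappa}/\bar{\cI})_{e+1}$, so Nakayama yields a surjection $A^{P(e+1)}\to(R_{A}/\cI)_{e+1}$; but the local criterion for flatness now demands $\mathrm{Tor}_{1}^{A}\bigl((R_{A}/\cI)_{e+1},\kappa\bigr)=0$, i.e.\ injectivity of $\cI_{e+1}\otimes\kappa\to R_{\kappa,e+1}$, and this is precisely equivalent to the freeness you are trying to establish. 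Constancy of fibre dimensions does not imply local freeness over a non-reduced base (consider $\kappa$ as a module over $A=k[\epsilon]/(\epsilon^{2})$), so no formal flatness argument closes this loop: the genuine content of persistence over a base is carried by Macaulay's binomial-expansion estimates (an induction on the number of variables) redone so as to bound the length of $\cI_{e+1}$ from above, not only its fibre dimension from below. So either cite the relative persistence theorem outright, or be prepared to reproduce that combinatorial argument; with that repaired, the rest of your proof stands.
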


\subsection{The universal family}

We can also compute the universal family 
\[
\cZ_{K}\subset(\Hilb_{P}(\PP^{r-1})\cap U_{K})\times\PP^{r-1}
\]
over $\Hilb_{P}(\PP^{r-1})\cap U_{K}$ as follows. Recall that $U_{K}=\AA^{p\cdot p^{\vee}}$
has coordinates $u_{i,j}$ ($i\in K$, $1\le j\le p^{\vee}$) and
consider the universal $r_{d}$-by-$p^{\vee}$ matrix 
\begin{equation}
\cA=\cA_{K}:=(u_{i,j})_{i\in M_{d},1\le j\le p^{\vee}}.\label{eq:univ mat}
\end{equation}
Here, for $i\in K$, the entry $u_{i,j}$ is the indeterminate $u_{i,j}\in k[U_{K}]$
and, for $i\in M_{d}\setminus K$, $u_{i,j}$ is defined to be either
1 or 0 so that the $p^{\vee}$-by-$p^{\vee}$ matrix 
\[
(u_{i,j})_{i\in M_{d}\setminus K,1\le j\le p^{\vee}}
\]
is the identity matrix. At each point $(a_{i,j})\in U_{K}$, the universal
matrix to the matrix $A$ in (\ref{eq:mat A}). The $j$-th column
of $\cA$ defines the universal $j$-th polynomial
\[
h_{K,j}:=\sum_{i=1}^{r_{d}}u{}_{i,j}m_{i}=n_{i}+\sum_{i\in K}u_{i,j}m_{i}\in k[U_{K}][x_{1},\dots,x_{r}],
\]
where $m_{i}$ is the $i$-th monimial in $M_{d}$ as before and $n_{i}$
is the $i$-th monomial in $M_{d}\setminus K$. They are homogeneous
of degree $d$ with respect to variables $x_{1},\dots,x_{r}$. 
\begin{prop}
Let $I_{K}\subset k[U_{K}]$ be the defining ideal of $\Hilb_{P}(\PP^{r-1})\cap U_{K}\subset U_{K}$,
which can be computed as in Proposition \ref{prop:Hilb eqs}. Then
the universal family $\cZ_{K}$ is written as
\[
\cZ_{K}=\Proj\frac{(k[U_{K}]/I_{K})[x_{1},\dots,x_{r}]}{(h_{K,1},\dots,h_{K,p^{\vee}})}.
\]
\end{prop}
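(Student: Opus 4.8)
The plan is to show that the explicitly written scheme $\cW := \Proj\bigl(A[x_1,\dots,x_r]/(h_{K,1},\dots,h_{K,p^\vee})\bigr)$, where $A := k[U_K]/I_K$, represents the restriction of the Hilbert functor to $\Hilb_P(\PP^{r-1}) \cap U_K$, and hence coincides with the universal family $\cZ_K$ by the uniqueness of representing objects. First I would recall that over the chart $U_K$ the tautological subbundle of the Grassmannian is trivialized by the universal matrix $\cA_K$ of (\ref{eq:univ mat}): its columns $h_{K,1},\dots,h_{K,p^\vee}$ are a $k[U_K]$-basis of a rank-$p^\vee$ free submodule $\cV \subset R_d \otimes_k k[U_K]$. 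Base-changing along $k[U_K] \twoheadrightarrow A$ produces the universal degree-$d$ part of the ideal over the Hilbert scheme, so the real content is that the full family is reconstructed from this single degree-$d$ datum — which is exactly why one takes $d \ge \varphi(P)$.

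The central step is to invoke Gotzmann's persistence and regularity theorems. The generators of $I_K$, namely the $(q^\vee+1)$-by-$(q^\vee+1)$ minors of $B$ from Proposition \ref{prop:Hilb eqs}, impose precisely the minimal one-step growth condition $\dim_k R_1 \cdot V_A = q^\vee = r_{d+1} - P(d+1)$. Gotzmann's persistence theorem then guarantees that this minimal growth propagates to all higher degrees, so that for every $e \ge d$ the graded piece $\bigl(A[x_1,\dots,x_r]/(h_{K,1},\dots,h_{K,p^\vee})\bigr)_e$ is a locally free $A$-module of rank $P(e)$. In particular $\cW \to \Spec A$ is flat with Hilbert polynomial $P$, so it is a family of closed subschemes of $\PP^{r-1}$ of the required type. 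I expect this to be the main obstacle: one must confirm that the \emph{scheme} structure cut out by the minors, rather than merely its underlying set, is exactly the locus where Gotzmann persistence applies, so that flatness holds scheme-theoretically over the possibly non-reduced base $\Spec A$. The treatments in \cite[Appendix C]{iarrobino1999sumsof} and \cite{bayer1982thedivision} carry this out, and I would follow them.

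Finally I would verify the universal property. Given any $A$-algebra $B$ and a $B$-point of $\Hilb_P(\PP^{r-1}) \cap U_K$, corresponding to a flat family $\cW_B \subset \PP^{r-1}_B$ with Hilbert polynomial $P$, Gotzmann regularity (again using $d \ge \varphi(P)$) ensures that the saturated ideal of $\cW_B$ is generated by its degree-$d$ part. By construction that degree-$d$ part is the pullback of $\cV$, i.e.\ it is generated by the images of $h_{K,1},\dots,h_{K,p^\vee}$, so $\cW_B$ is the base change of $\cW$ along the given $B$-point. This establishes that $\cW$ carries the universal family, whence $\cZ_K = \cW$, which is the asserted formula.
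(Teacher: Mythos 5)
Your proposal is correct and follows essentially the same route as the paper, which states this proposition without a separate proof precisely because it is the content of the Bayer--Gotzmann description of the Hilbert scheme recalled from \cite[Appendix C]{iarrobino1999sumsof} and \cite{bayer1982thedivision}: the minors of $B$ cut out the locus where the one-step growth is minimal, Gotzmann persistence (valid scheme-theoretically over the possibly non-reduced base, via the Fitting-ideal formulation) gives flatness of $\Proj$ of the quotient by $(h_{K,1},\dots,h_{K,p^{\vee}})$, and Gotzmann regularity with $d\ge\varphi(P)$ gives the universal property. You correctly identified the scheme-theoretic (non-reduced base) issue as the delicate point and deferred it to the same sources the paper relies on.
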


\subsection{The Hilbert scheme of a general projective scheme}

Next we consider the Hilbert scheme $\Hilb_{P}(X)$ of a projective
scheme $X\subset\PP_{k}^{r-1}$. This is the moduli scheme of those
closed subschemes $Z\subset X$ that have the Hilbert polynomial $P$
as a closed subscheme of $\PP_{k}^{r-1}$. Let $Q$ be the Hilbert
polynomial of $X$. If $d\ge\varphi(Q)$, then $I_{X}$ is $d$-regular.
In particular, the truncated ideal $(I_{X})_{\ge d}$, which is the
part of the saturated ideal $I_{X}$ with degree $\ge d$, is generated
by the degree-$d$ part $(I_{X})_{d}$. Now we choose the integer
$d$ to be $\max\{\varphi(P),\varphi(Q)\}$. For a closed subscheme
$Z\subset\PP_{k}^{r-1}$ with the Hilbert polynomial $P$, we have
$Z\subset X$ if and only if $(I_{Z})_{d}\supset(I_{X})_{d}$. Let
\[
f_{1}=\begin{pmatrix}f_{1,1}\\
\vdots\\
f_{r_{d},1}
\end{pmatrix},\dots,f_{l}=\begin{pmatrix}f_{1,l}\\
\vdots\\
f_{r_{d},l}
\end{pmatrix}\in k^{r_{d}}=R_{d}
\]
be a basis of $(I_{X})_{d}$. Note that we can explicitly construct
such a basis from the given finitely many defining polynomials of
$X$. Firstly, there is an algorithm to compute the saturation $(I:(x_{1},\dots,x_{r})^{\infty})$
of an homogeneous ideal $I\subset k[x_{1},\dots,x_{r}]$, see \cite[page 360]{eisenbud1995commutative}.
If $g_{1},\dots,g_{m}$ are thus computed generators of the saturated
ideal $I_{X}$, which we may assume have degree $\le d$ (since those
of degree $>d$ are redundant), then $(I_{X})_{d}$ is generated by
elements of the form $xg_{i}$, where $x$ is a monomial of degree
$d-\deg g_{i}$. We can choose a basis from them in a standard linear
algebra procedure. As before, a point of $U_{K}=\AA^{p\cdot p^{\vee}}$
is identified with a $r_{d}$-by-$p^{\vee}$ matrix $A$. To such
a matrix, we associate the $r_{d}$-by-$(p^{\vee}+l)$ matrix $C_{A}=(A|f_{1}|\cdots|f_{l})$.
For example, if $K$ is the first $p$ monomials in $M_{d}$, then
\[
C_{A}=\begin{pmatrix}1 &  & 0 & f_{1,1} & \cdots & f_{1,l}\\
 & \ddots\\
0 &  & 1 & \vdots & \ddots & \vdots\\
a_{p^{\vee}+1,1} & \cdots & a_{p^{\vee}+1,p^{\vee}} & \vdots &  & \vdots\\
\vdots & \ddots & \vdots\\
a_{r_{d},1} & \cdots & a_{r_{d},p^{\vee}} & f_{r_{d},1} & \cdots & f_{r_{d},l}
\end{pmatrix}.
\]
For a closed subscheme $Z\subset\PP^{r-1}$ with $[Z]\in U_{K}$,
the following conditions are equivalent:
\begin{enumerate}
\item $(I_{Z})_{d}\supset(I_{X})_{d}$. 
\item $\rank C_{A}=p^{\vee}$. 
\item all the $(p^{\vee}+1)$-by-$(p^{\vee}+1)$ minors of $C_{A}$ vanish.
\end{enumerate}
The minors in the last condition are polynomials in the coordinates
of $U_{K}$. 
\begin{prop}
The closed subscheme 
\begin{equation}
\Hilb_{P}(X)_{K}:=\Hilb_{P}(X)\cap U_{K}\label{eq:Hilb_K}
\end{equation}
of $U_{K}=\AA^{p\cdot p^{\vee}}$ is defined by the defining polynomials
of $\Hilb_{P}(\PP^{r-1})$ given in Proposition \ref{prop:Hilb eqs}
and the above minors of $C_{A}$.
\end{prop}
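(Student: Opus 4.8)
The set-theoretic content of the statement is already contained in the equivalence of conditions (1)--(3) recorded above, so the only real task is to match \emph{scheme} structures; the plan is to first pin down, via the functor of points, the structure that $\Hilb_P(X)_K$ must carry as a closed subscheme of $\Hilb_P(\PP^{r-1})_K$, and then to verify that the determinantal ideal of the minors of $C_A$ reproduces exactly that structure. I would invoke the universal property: $\Hilb_P(X)$ represents the closed subfunctor of $\Hilb_P(\PP^{r-1})$ whose $T$-points are those flat families $\cZ\subset\PP^{r-1}_{T}$ of fibrewise Hilbert polynomial $P$ that are contained in $X_T:=X\times T$. Because $d\ge\max\{\varphi(P),\varphi(Q)\}$, the ideal $I_X$ is $d$-regular and every fibre of the universal family is $d$-regular, so the containment $\cZ\subset X_T$ is equivalent, uniformly in $T$, to the single degree-$d$ inclusion $(I_X)_d\otimes\cO_T\subseteq(I_{\cZ})_d$. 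Under the embedding $[Z]\mapsto[(I_Z)_d]$ the sheaf $(I_{\cZ})_d$ is the tautological rank-$p^\vee$ subbundle $\cS\subset R_d\otimes\cO$; writing $\cQ:=(R_d\otimes\cO)/\cS$, the inclusion is equivalent to the vanishing of the composite $(I_X)_d\otimes\cO\to R_d\otimes\cO\to\cQ$, i.e.\ to the vanishing in $\cQ$ of the images of the basis vectors $f_1,\dots,f_l$ of $(I_X)_d$. Being the vanishing of a map of vector bundles, this is a closed condition and defines the representing scheme structure.

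Next I would make this explicit on $U_K$. There $\cS$ is free with frame the columns of the universal matrix $\cA$ of (\ref{eq:univ mat}), and $\cQ$ is free of rank $p$ with frame the monomials indexed by $K$. Splitting $M_d=K\sqcup K^{\mathrm{c}}$ with $K^{\mathrm{c}}:=M_d\setminus K$ and writing $f_i=(f_{i,K},f_{i,K^{\mathrm{c}}})$ accordingly, the fact that the $K^{\mathrm{c}}$-block of $\cA$ is the identity shows that the image of $f_i$ in $\cQ$ is $f_{i,K}-\alpha\,f_{i,K^{\mathrm{c}}}$, where $\alpha$ is the $p$-by-$p^\vee$ submatrix of $\cA$ on the rows indexed by $K$. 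Hence the functorially correct structure on $\Hilb_P(X)_K$ is cut out inside $\Hilb_P(\PP^{r-1})_K$ by the $pl$ linear equations
\[
f_{i,K}-\alpha\,f_{i,K^{\mathrm{c}}}=0\qquad(1\le i\le l),
\]
read componentwise in $k^{K}\otimes k[U_K]$. Let $J_{\mathrm{lin}}$ be the ideal of $k[U_K]$ they generate.

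It then remains to prove that $J_{\mathrm{lin}}$ equals the ideal $J_{\mathrm{min}}$ generated by all $(p^\vee+1)$-by-$(p^\vee+1)$ minors of $C_A=(A\mid f_1\mid\cdots\mid f_l)$. For $J_{\mathrm{lin}}\subseteq J_{\mathrm{min}}$, cofactor expansion along the identity block shows that the $\kappa$-component of $f_{i,K}-\alpha f_{i,K^{\mathrm{c}}}$ is, up to sign, the minor of $C_A$ on the rows $K^{\mathrm{c}}\cup\{\kappa\}$ and on the columns $\{1,\dots,p^\vee,\,p^\vee+i\}$. For the reverse inclusion, note that modulo $J_{\mathrm{lin}}$ one has $f_{i,K}=\alpha f_{i,K^{\mathrm{c}}}$, and since the $K^{\mathrm{c}}$-block of $\cA$ is the identity this upgrades to the full-vector identity $f_i\equiv\cA\,f_{i,K^{\mathrm{c}}}$ in $R_d\otimes(k[U_K]/J_{\mathrm{lin}})$; thus each of the last $l$ columns of $C_A$ is a combination of the first $p^\vee$, so $\rank C_A\le p^\vee$ over $k[U_K]/J_{\mathrm{lin}}$ and every $(p^\vee+1)$-minor lies in $J_{\mathrm{lin}}$. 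Hence $J_{\mathrm{lin}}=J_{\mathrm{min}}$, and together with Proposition \ref{prop:Hilb eqs}, which cuts out $\Hilb_P(\PP^{r-1})_K$ by the minors of $B$, this gives the claim.

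I expect the only genuine obstacle to lie in the first paragraph: justifying that the regularity bound $d\ge\max\{\varphi(P),\varphi(Q)\}$ really lets one detect the containment $\cZ\subset X_T$ at the single degree $d$ \emph{uniformly over all test schemes} $T$, so that the bundle-map vanishing in $\cQ$ furnishes the representing structure rather than merely the underlying set. Granting that, the identity $J_{\mathrm{lin}}=J_{\mathrm{min}}$ is a clean piece of linear algebra built on the identity block of $\cA$.
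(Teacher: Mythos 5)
Your proposal is correct, but there is essentially nothing in the paper to compare it with: the paper states this proposition without proof, the preceding text establishing only the set-theoretic equivalence of conditions (1)--(3), with the scheme-theoretic refinement implicitly inherited (as for Proposition \ref{prop:Hilb eqs}) from the Bayer/Gotzmann/Iarrobino--Kleiman treatment it cites. Your argument supplies the omitted proof along the standard functorial lines, and both halves are sound. The worry you flag at the end is in fact settled by the same Gotzmann facts underlying the embedding into $\Grass_{p^{\vee}}(R_{d})$: for $d\ge\varphi(P)$ the pullback of the tautological subbundle along any $T$-point $[\cZ]$ of $\Hilb_{P}(\PP^{r-1})$ is $\pi_{T*}(\cI_{\cZ}(d))\subset R_{d}\otimes\cO_{T}$ (locally free, compatible with base change), and for $d\ge\varphi(Q)$ the sheaf $\cI_{X_{T}}(d)$ is generated by the global sections $(I_{X})_{d}$; hence $(I_{X})_{d}\otimes\cO_{T}\subseteq\pi_{T*}(\cI_{\cZ}(d))$ holds if and only if $f_{1},\dots,f_{l}$ restrict to zero on $\cZ$, if and only if $\cI_{X_{T}}\subseteq\cI_{\cZ}$, i.e.\ $\cZ\subseteq X_{T}$, uniformly in $T$. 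Your linear algebra is also right: on $U_{K}$ the quotient map $R_{d}\otimes\cO\to\cQ$ is $v\mapsto v_{K}-\alpha v_{M_{d}\setminus K}$ with $\alpha$ the $K$-row block of $\cA$, cofactor expansion against the identity block identifies each component of $f_{i,K}-\alpha f_{i,M_{d}\setminus K}$ with a $(p^{\vee}+1)$-minor of $C_{A}$ up to sign, and conversely modulo these linear equations each column $f_{i}$ equals $\cA f_{i,M_{d}\setminus K}$, so all $(p^{\vee}+1)$-minors vanish. This gives equality of the two ideals, not merely of their radicals, which is exactly what the scheme-theoretic assertion requires; your proof is a legitimate and complete substitute for the absent one.
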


We can compute the universal family $\cZ_{X,K}\subset\Hilb_{P}(X)_{K}\times X$
over $\Hilb_{P}(X)_{K}$ in a similar way as in the case of $\cZ_{K}$.

If $I_{X,K}\subset k[U_{K}]$ denotes the defining ideal of $\Hilb_{P}(X)\cap U_{K}$,
then 
\begin{equation}
\begin{aligned}\cZ_{X,K} & =\Proj\frac{(k[U_{K}]/I_{X,K})[x_{1},\dots,x_{r}]}{(f_{1},\dots,f_{l},h_{K,1},\dots,h_{K,p^{\vee}})}\\
 & \subset\Proj\frac{(k[U_{K}]/I_{X,K})[x_{1},\dots,x_{r}]}{(f_{1},\dots,f_{l})}=\Hilb_{P}(X)_{K}\times X.
\end{aligned}
\label{eq:Z X K}
\end{equation}
As a conclusion of the above computation, we have:
\begin{prop}[{\cite[Lem.\ 8.23]{poonen2015computing}}]
\label{prop:univ fam}There is an algorithm such that given a projective
variety $X\subset\PP^{r-1}$ and a Hilbert polynomial $P$, then it
outputs a positive integer $d$ and computes the closed subscheme
$\Hilb_{P}(X)\subset\Grass_{p^{\vee}}(R_{d})$ in terms of defining
equations on each affine chart $U_{K}$ together with defining equations
of the universal families $\cZ_{X,K}\subset\Hilb_{P}(X)_{K}\times X$. 
\end{prop}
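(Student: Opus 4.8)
The plan is to assemble the single algorithm promised by the statement out of the explicit constructions carried out in the three preceding subsections; the only genuinely new content is organizing the pieces, pinning down the output integer $d$, and checking that this choice makes every step valid.

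First I would compute the Hilbert polynomial $Q$ of $X$ from its given defining polynomials. Applying the saturation algorithm of \cite[page 360]{eisenbud1995commutative} yields generators of the saturated ideal $I_{X}$, and from a Gröbner basis of $I_{X}$ one reads off the Hilbert function and hence $Q$ (its values agree with a polynomial in large degree, which standard Gröbner-based routines output). Next, for each of $P$ and $Q$ I would compute the Gotzmann number by solving the binomial expansion that defines the integers $0<a_{0}\le\cdots\le a_{k}$: these are determined greedily and found algorithmically, as noted after the definition of $\varphi$. I then output $d:=\max\{\varphi(P),\varphi(Q)\}$ and set $r_{d}:=\dim_{k}R_{d}=\binom{d+r-1}{r-1}$, $p:=P(d)$, and $p^{\vee}:=r_{d}-p$.

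With $d$ fixed, the Grassmannian $\Grass_{p^{\vee}}(R_{d})$ has the finite standard covering by the charts $U_{K}=\AA^{p\cdot p^{\vee}}$ indexed by the $p$-subsets $K\subset M_{d}$, of which there are only $\binom{r_{d}}{p}$. On each chart I would carry out two computations. The first is the defining ideal of $\Hilb_{P}(\PP^{r-1})\cap U_{K}$: form the universal matrix $\cA_{K}$ of (\ref{eq:univ mat}), build the matrix $B=(B_{1}|\cdots|B_{r})$ representing multiplication $R_{1}\cdot V_{A}\subset R_{d+1}$, and take its $(q^{\vee}+1)$-by-$(q^{\vee}+1)$ minors, which is exactly Proposition \ref{prop:Hilb eqs}. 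The second is the containment condition $Z\subset X$: compute a basis $f_{1},\dots,f_{l}$ of $(I_{X})_{d}$ by taking the monomial multiples $x\,g_{i}$ of the saturated generators with $\deg(x\,g_{i})=d$ and reducing to a basis by linear algebra over $k$, form $C_{A}=(A|f_{1}|\cdots|f_{l})$, and take its $(p^{\vee}+1)$-by-$(p^{\vee}+1)$ minors. The union of the two sets of minors generates the defining ideal $I_{X,K}$ of $\Hilb_{P}(X)_{K}$, and feeding $I_{X,K}$ together with the universal polynomials $h_{K,1},\dots,h_{K,p^{\vee}}$ into the Proj presentation (\ref{eq:Z X K}) outputs the universal family $\cZ_{X,K}$. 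Every operation here---arithmetic in $k=\overline{\QQ}$, Gröbner bases, solving linear systems, and computing determinants---is effective by the conventions of Section \ref{sec:Preliminaries}.

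The crux, and the only point requiring argument rather than bookkeeping, is the correctness of the choice $d=\max\{\varphi(P),\varphi(Q)\}$. On the one hand, $d\ge\varphi(P)$ is precisely the hypothesis under which Bayer's equations cut out $\Hilb_{P}(\PP^{r-1})$ with its correct scheme structure inside $\Grass_{p^{\vee}}(R_{d})$, so Proposition \ref{prop:Hilb eqs} applies verbatim. On the other hand, $d\ge\varphi(Q)$ forces $I_{X}$ to be $d$-regular, so that $(I_{X})_{\ge d}$ is generated by $(I_{X})_{d}$, as recorded in the preceding subsection; this is what guarantees that for a subscheme $Z$ with Hilbert polynomial $P$ one has $Z\subset X$ if and only if $(I_{Z})_{d}\supset(I_{X})_{d}$, the equivalence that makes the rank condition on $C_{A}$ a faithful test for containment. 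Thus I expect the main obstacle to be confirming this regularity-based correctness---that a \emph{single} degree $d$ simultaneously controls the Grassmannian embedding of $\Hilb_{P}(\PP^{r-1})$ and the truncation of $I_{X}$---after which the remainder is the routine (if lengthy) assembly of effective linear algebra spelled out above, matching \cite[Lem.\ 8.23]{poonen2015computing}.
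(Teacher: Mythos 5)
Your proposal is correct and follows essentially the same route as the paper: the paper's proof of this proposition is precisely the assembly of its preceding constructions, with $d=\max\{\varphi(P),\varphi(Q)\}$, Bayer's minors of $B$ cutting out $\Hilb_{P}(\PP^{r-1})\cap U_{K}$, the $(p^{\vee}+1)$-minors of $C_{A}=(A|f_{1}|\cdots|f_{l})$ enforcing $(I_{Z})_{d}\supset(I_{X})_{d}$, and the presentation (\ref{eq:Z X K}) for $\cZ_{X,K}$. Your justification of the choice of $d$ (Gotzmann bound for the Grassmannian embedding, $d$-regularity of $I_{X}$ for the faithfulness of the containment test) is exactly the reasoning the paper records before stating the proposition.
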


\subsection{The action of $\protect\GL_{r}$\label{subsec: GL_r action}}

Since it will be used to prove the decidability of projective equivalence
in Section \ref{sec:Projective-equaivalence}, we describe the natural
action of the general linear group $\GL_{r}$ on the Hilbert scheme
$\Hilb_{P}(\PP^{r-1})$. The action of $\GL_{r}$ on $\PP^{r-1}$
induces an action of $\GL_{r}$ on $\Hilb_{P}(\PP^{r-1})$. The Hilbert
scheme has the open covering $\Hilb_{P}(\PP^{r-1})=\bigcup_{K}U_{K}$
and we have an explicit presentation of $U_{K}$ for each $K$. 
\begin{defn}
We define $V_{K,K'}$ to be the preimage of $U_{K'}$ by the morphism
\[
\mu\colon\GL_{r}\times U_{K}\to\Hilb_{P}(\PP^{r-1}).
\]
\end{defn}

We explain how to compute an explicit presentation of the affine scheme
$V_{K,K'}$ as well as the morphism 
\begin{equation}
\mu_{K,K'}:=\mu|_{V_{K,K'}}\colon V_{K,K'}\to U_{K'}.\label{eq:mu K K'}
\end{equation}

Let $S_{r}$ be the coordinate ring of $\GL_{r}$, that is, the localization
$k[\underline{s}]_{D}$ of the polynomial ring $k[\underline{s}]=k[s_{i,j}\mid1\le i,j\le r]$
by the determinant $D=\det(s_{i,j})$. We have the universal matrix
\[
\begin{pmatrix}s_{1,1} & \cdots & s_{1,r}\\
\vdots & \ddots & \vdots\\
s_{r,1} & \cdots & s_{r,r}
\end{pmatrix}\in\GL_{r}(S_{r}).
\]
The action $\GL_{r}\times\AA^{r}\to\AA^{r}$ is given by the following
ring map: 
\begin{align*}
k[x_{1},\dots,x_{r}] & \to S_{r}[x_{1},\dots,x_{r}]\\
x_{i} & \mapsto\sum_{j=1}^{r}s_{i,j}x_{j}
\end{align*}

As before, we fix $d\ge\varphi(P)$. We write a monomial in $k[x_{1},\dots,x_{r}]$
as $x^{e}=x_{1}^{e_{1}}\cdots x_{r}^{e_{r}}$ with multi-index notation.
The last map sends a monomial $x^{e}$ of degree $d$ to the polynomial,
\[
\prod_{i=1}^{d}\left(\sum_{j=1}^{r}s_{i,j}x_{j}\right)^{e_{i}}=:\sum_{e'}\eta_{e,e'}x^{e'},
\]
which is homogeneous of degree $d$ both in $x_{1},\dots,x_{r}$ and
in $s_{i,j}$. Here $\eta_{e',e}$ is a homogenous polynomial of degree
$d$ in $k[\underline{s}]$. For each $e$, we can compute $\eta_{e,e'}$'s
explicitly. We get the map
\begin{align*}
\GL_{r} & \to\GL_{r_{d}}\\
(a_{i,j})_{1\le i,j\le r} & \mapsto(\eta_{e,e'}(\underline{a}))_{e,e'\in M_{d}},
\end{align*}
which induces an action of $\GL_{r}$ on $\AA^{r_{d}}$. In terms
of coordinate rings, this is given by 
\[
S_{r_{d}}\to S_{r},\,s_{e,e'}\mapsto\eta_{e,e'}(s_{1,1},\dots,s_{r,r}).
\]

Let $K,K'\subset M_{d}$ be two subsets with $\sharp K=\sharp K'=p$.
Let $\cA_{K}=(u_{i,j})$ be the universal $r_{d}$-by-$p^{\vee}$
matrix for $K$ (see (\ref{eq:univ mat})). Compute the matrix product
\[
\cB:=(\eta_{e,e'})\cdot\cA_{K}\in\M_{r_{d}\times p^{\vee}}(k(\underline{s})[\underline{u}])
\]
with $k(\underline{s})$ the fraction field of $k[\underline{s}]$
and $k(\underline{s})[\underline{u}]=k(\underline{s})[u_{i,j}\mid i\in K,1\le j\le p^{\vee}]$.
By Gaussian elimination in the field $k(\underline{s},\underline{u})$,
we can algorithmically and uniquely transform $\cB$ to a matrix belonging
to $U_{K'}(k(\underline{s},\underline{u}))$ by applying elementary
column operations finitely many times; let $\theta=(\theta_{i,j})\in\M_{r_{d}\times p^{\vee}}(k(\underline{s},\underline{u}))$
be the resulting matrix. Here the $p^{\vee}$-by-$p^{\vee}$ matrix
$(\theta_{i,j})_{i\in M_{d}\setminus K',1\le j\le r_{d}}$ is the
identity matrix. We get $p\cdot p^{\vee}$ rational functions $\theta_{i,j}\in k(\underline{s},\underline{u})$
for $i\in K'$ and $1\le j\le p^{\vee}$. We see that the rational
map $\GL_{r}\times U_{K}\dashrightarrow U_{K'}$ is given by:
\begin{align*}
k[\underline{u}] & \to k(\underline{s},\underline{u})\\
u_{i,j} & \mapsto\theta_{i,j}
\end{align*}
Let $V_{K,K'}\subset\GL_{r}\times U_{K}$ be the domain of this rational
map, that is, the preimage of $U_{K'}$ by the map $\GL_{r}\times U_{K}\to\Hilb_{P}(\PP^{r-1}).$
This is an affine scheme, since $\GL_{r}\times U_{K}$ and $U_{K'}$
are affine and $\Hilb_{P}(\PP^{r-1})$ is separated. The coordinate
ring $T_{K,K'}$ of $V_{K,K'}$ is obtained by adjoining $\theta_{i,j}$'s
to $S_{r}[\underline{u}]$. 

\section{\label{sec:Projective-equaivalence}Projective equivalence}

Two projective schemes $X$ and $Y$ embedded in the same projective
space $\PP^{r-1}$ are said to be \emph{projectively equivalent }if
there exists an invertible matrix $g\in\GL_{r}(k)$ such that $g(X)=Y$. 
\begin{prop}
\label{prop: embedded iso}Let $X$ and $Y$ be projective schemes
embedded in the same projective space $\PP^{r-1}$. Then we can algorithmically
check whether $X$ and $Y$ are projectively equivalent.
\end{prop}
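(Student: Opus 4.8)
The plan is to recast projective equivalence as an orbit-membership question on a Hilbert scheme and then as the solvability of a finite polynomial system over $k=\Qbar$. First I would compute the Hilbert polynomials of $X$ and $Y$ from their saturated ideals, as in Section \ref{sec:Hilbert-schemes}; since the $\GL_r$-action on $\PP^{r-1}$ preserves Hilbert polynomials, if these differ then $X$ and $Y$ are not projectively equivalent and we stop. Otherwise call the common polynomial $P$ and fix $d\ge\varphi(P)$. Both $X$ and $Y$ then define $k$-points $[X],[Y]\in\Hilb_P(\PP^{r-1})\subset\Grass_{p^{\vee}}(R_d)$, namely the subspaces $(I_X)_d,(I_Y)_d\subset R_d$, which I can compute explicitly (saturate, truncate to degree $d$, and extract a basis, exactly as in Section \ref{sec:Hilbert-schemes}). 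Because the embedding $[Z]\mapsto[(I_Z)_d]$ is $\GL_r$-equivariant and injective on closed subschemes with Hilbert polynomial $P$, we have $g(X)=Y$ for some $g\in\GL_r(k)$ if and only if $g\cdot[X]=[Y]$. Thus projective equivalence of $X$ and $Y$ is exactly the condition that $[Y]$ lie in the $\GL_r$-orbit of $[X]$.

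To test orbit membership I would use the explicit description of the $\GL_r$-action from Section \ref{subsec: GL_r action}. By linear algebra I can find a chart $U_K$ with $[X]\in U_K$ and a chart $U_{K'}$ with $[Y]\in U_{K'}$, together with the coordinate vectors $\bar u=(\bar u_{i,j})$ of $[X]$ in $U_K$ and $\bar u'=(\bar u'_{i,j})$ of $[Y]$ in $U_{K'}$. Since $V_{K,K'}$ is the preimage of $U_{K'}$ under $\mu\colon\GL_r\times U_K\to\Hilb_P(\PP^{r-1})$, any $g$ with $g\cdot[X]=[Y]$ automatically satisfies $(g,[X])\in V_{K,K'}$; hence $[Y]\in\GL_r\cdot[X]$ if and only if there is $g=(s_{i,j})\in\GL_r(k)$ with $(g,[X])\in V_{K,K'}$ and $\mu_{K,K'}(g,[X])=[Y]$. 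Writing the morphism $\mu_{K,K'}$ of (\ref{eq:mu K K'}) in coordinates via the rational functions $\theta_{i,j}(\underline{s},\underline{u})$ computed in Section \ref{subsec: GL_r action}, and specializing $\underline{u}=\bar u$, this becomes a finite system in the unknown entries $s_{i,j}$: the equations $\theta_{i,j}(\underline{s},\bar u)=\bar u'_{i,j}$ for $i\in K'$ and $1\le j\le p^{\vee}$, together with $\det(\underline{s})\ne0$ and the non-vanishing of the denominators that cut out $V_{K,K'}$ inside $\GL_r\times U_K$.

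It remains to decide whether this system of polynomial equations and inequations has a solution with coordinates in $k=\Qbar$. Clearing denominators turns the equations into polynomial equations in the $s_{i,j}$ with coefficients in $k$, and each inequation $\delta\ne0$ is encoded by an extra equation $\delta\,w-1=0$ in a fresh variable $w$ (Rabinowitsch's trick), which in particular records the open condition $\det(\underline{s})\ne0$. Since $k$ is algebraically closed, by the Nullstellensatz the resulting system is solvable over $k$ if and only if the ideal it generates in the relevant polynomial ring is not the unit ideal; this is an ideal-membership question for $1$, decidable by a Gröbner basis computation as recalled in Section \ref{sec:Preliminaries}. Running this test and returning ``projectively equivalent'' precisely when a solution exists gives the desired algorithm.

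I expect the conceptual content to be routine once the machinery of Section \ref{subsec: GL_r action} is in place, with the delicate points being bookkeeping rather than mathematics. Concretely, one must present $V_{K,K'}$ correctly as a localization, keeping track of the denominators introduced by the Gaussian elimination defining the $\theta_{i,j}$, so that the specialized system faithfully encodes ``$(g,[X])\in V_{K,K'}$'' and not some larger locus; and one must invoke algebraic closedness of $\Qbar$ at the right moment, since it is exactly what guarantees that a nonempty quasi-affine variety has a $k$-point, and hence that orbit membership reduces to the unit-ideal test rather than to a subtler question of rational points over a non-closed field.
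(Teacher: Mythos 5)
Your proposal is correct and follows essentially the same route as the paper: compare Hilbert polynomials, realize $[X]$ and $[Y]$ as points of charts $U_K$ and $U_{K'}$ of $\Hilb_P(\PP^{r-1})$, and reduce projective equivalence to non-emptiness of the locus in $V_{K,K'}$ where $\mu_{K,K'}$ sends $(g,[X])$ to $[Y]$. The only difference is that you spell out the final emptiness test (clearing denominators, Rabinowitsch's trick, and the Nullstellensatz via Gröbner bases), a step the paper leaves implicit when it says one can ``compute the closed subset and check whether it is empty.''
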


\begin{proof}
We follow the notation of Section \ref{sec:Hilbert-schemes}. We can
compute the Hilbert polynomials of $X$ and $Y$ (see \cite[Sections 15.1.1 and 15.10.2]{eisenbud1995commutative})
and check whether they are the same. If they are different, then there
is no $g\in\GL_{r}(k)$ as in the proposition. Suppose that they are
the same and denote it by $P$. Let $d:=\varphi(P)$, the Gotzmann
number of $P$, let $I_{X}$ and $I_{Y}$ be the saturated ideals
of $X$ and $Y$ and let $(I_{X})_{d}$ and $(I_{Y})_{d}$ be their
degree-$d$ parts. We compute bases of $(I_{X})_{d}$ and $(I_{Y})_{d}$,
which are represented by $r_{d}$-by-$p^{\vee}$ matrices $A_{X}$
and $A_{Y}$ respectively. We then compute their reduced column echelon
forms and denote them by $B_{X}$ and $B_{Y}$. Let $K$ and $K'$
be the set of indices such that the corresponding rows of $B_{X}$
and $B_{Y}$ have pivots. Then $B_{X}$ and $B_{Y}$ define the points
$[X]\in U_{K}$ and $[Y]\in U_{K'}$ respectively. Consider the morphism
$\mu_{K,K'}\colon V_{K,K'}\to U_{K'}$ (see (\ref{eq:mu K K'})) and
the morphism 
\[
p\colon V_{K,K'}\hookrightarrow\GL_{r}\times U_{K}\xrightarrow{\text{projection}}U_{K}
\]
of affine schemes. We compute the closed subset
\[
(\mu_{K,K'})^{-1}([Y])\cap p^{-1}([X])\subset V_{K,K'},
\]
which is the set of pairs $(g,[X])$ such that $g\in\GL_{r}(k)$ and
$g(X)=Y$. Thus, there exists $g\in\GL_{r}(k)$ as in the proposition
if and only if this closed subset is not empty. 
\end{proof}

\section{Hom schemes and Iso schemes\label{sec:Hom and Iso schemes}}

For projective schemes $X$ and $Y$, the \emph{Hom scheme}, denoted
by $\ulHom(X,Y)$, and \emph{Iso scheme}, denoted by $\ulIso(X,Y)$,
are the moduli schemes of morphisms $X\to Y$ and isomorphisms $X\to Y$
respectively. From \cite[p.\ 16]{kollar1996rational}, we have an
open immersion 
\begin{align*}
\ulHom(X,Y) & \to\Hilb(X\times Y),\\{}
[f\colon X\to Y] & \mapsto[\Gamma_{f}]
\end{align*}
where $\Gamma_{f}\subset X\times Y$ is the graph of $f$. Namely,
we can identify $\ulHom(X,Y)$ with the locus of points $[Z]\in\Hilb(X\times Y)$
such that the first projection $Z\to X$ is an isomorphism. Similarly
we can identify $\ulIso(X,Y)$ with the locus of points $[Z]$ where
both the projections $Z\to X$ and $Z\to Y$ are isomorphisms. Thus,
if we embed also $\ulHom(Y,X)$ into $\Hilb(X\times Y)$ via the obvious
isomorphism $\Hilb(X\times Y)\cong\Hilb(Y\times X)$, then we have
\[
\ulIso(X,Y)=\ulHom(X,Y)\cap\ulHom(Y,X).
\]

We now fix embeddings $X\subset\PP^{m-1}$ and $Y\subset\PP^{n-1}$
and embed the product $X\times Y$ into $\PP^{mn-1}$ by the Segre
embedding. Then the Hilbert scheme $\Hilb(X\times Y)$ decomposes
into the disjoint union of countably many open and closed subschemes
as 
\[
\Hilb(X\times Y)=\coprod_{P}\Hilb_{P}(X\times Y).
\]
Here $P$ runs over Hilbert polynomials. Recall that $\Hilb_{P}(X\times Y)_{K}$
denotes $\Hilb_{P}(X\times Y)\cap U_{K}$, see (\ref{eq:Hilb_K}). 
\begin{defn}
We define 
\begin{align*}
\ulHom_{P}(X,Y) & :=\ulHom(X,Y)\cap\Hilb_{P}(X\times Y),\\
\ulHom_{P}(X,Y)_{K} & :=\ulHom(X,Y)\cap\Hilb_{P}(X\times Y)_{K},\\
\ulIso_{P}(X,Y) & :=\ulIso(X,Y)\cap\Hilb_{P}(X\times Y),\\
\ulIso_{P}(X,Y)_{K} & :=\ulIso(X,Y)\cap\Hilb_{P}(X\times Y)_{K}.
\end{align*}
\end{defn}

We have the open coverings,
\begin{align*}
\ulHom_{P}(X,Y) & =\bigcup_{K}\ulHom_{P}(X,Y)_{K},\\
\ulIso_{P}(X,Y) & =\bigcup_{K}\ulIso_{P}(X,Y)_{K}.
\end{align*}

\begin{defn}
\label{def: Hilb poly of an iso}We define the \emph{Hilbert polynomial
}of an isomorphism $f\colon X\to Y$ to be the Hilbert polynomial
of its graph $\Gamma_{f}\subset X\times Y$ as a closed subscheme
of $\PP^{mn-1}$. Namely the Hilbert polynomial of $f$ is the polynomial
$P$ such that $[f]\in\ulIso_{P}(X,Y)$. 
\end{defn}

When we show the decidability of the isomorphism problem for several
classes of projective schemes, our strategy will be to construct finitely
many polynomials $P_{1},\dots,P_{l}$ from given projective schemes
$X$ and $Y$ that satisfy the following property; if $X$ and $Y$
are isomorphic, then there exists an isomorphism $f\colon X\to Y$
having the Hilbert polynomial $P_{i}$ for some $i\in\{1,\dots,l\}$.
Namely $X$ and $Y$ are isomorphic if and only if $\bigcup_{i=1}^{l}\ulIso_{P_{i}}(X,Y)\ne\emptyset$.
Then what remains to do is to compute $\ulIso_{P_{i}}(X,Y)$ for every
$i\in\{1,\dots,l\}$.

For each $P$ and $K$, we have 
\[
\ulIso_{P}(X,Y)_{K}=\ulHom_{P}(X,Y)_{K}\cap\ulHom_{P}(Y,X)_{K}.
\]
Thus, computation of $\ulIso_{P}(X,Y)_{K}$ is reduced to the one
of $\ulHom_{P}(X,Y)_{K}$. We focus on the latter computation in what
follows. Replacing $X$ in (\ref{eq:Z X K}) with $X\times Y$, we
get explicit presentation of the universal family 
\[
\cZ_{X\times Y,K}\subset\Hilb_{P}(X\times Y)_{K}\times X\times Y.
\]
From discussion above, the open subscheme $\ulHom_{P}(X,Y)_{K}$ of
$\Hilb_{P}(X\times Y)_{K}$ is the largest open subscheme $U$ such
that the composite morphism
\[
g\colon\cZ_{X\times Y,K}\hookrightarrow\Hilb_{P}(X\times Y)_{K}\times X\times Y\xrightarrow{\text{projection}}\Hilb_{P}(X\times Y)_{K}\times X
\]
is an isomorphism over $U\times X$. Since we are given the embedding
$X\subset\PP^{m-1}$, we have the standard affine open covering $X=\bigcup_{i=1}^{m}X_{i}$.
Let 
\[
g_{i}\colon g^{-1}(X_{i})\to\Hilb_{P}(X\times Y)_{K}\times X_{i}
\]
be restriction of $g$. This is a projective morphism with the target
being affine. Let $U_{i}\subset\Hilb_{P}(X\times Y)_{K}$ be the largest
open subset such that $g_{i}$ is an isomorphism over $U_{i}\times X_{i}$.
Then 
\[
U=\ulHom_{P}(X,Y)_{K}=\bigcap_{i=1}^{m}U_{i}.
\]
There exists an algorithm to compute each $U_{i}$:
\begin{prop}
Let $H$ and $X$ be affine schemes and let $f\colon Z\to H\times X$
be a projective morphism. Then there exists an algorithm to compute
the largest open subset $U\subset H$ such that $f$ is an isomorphism
over $U\times X$.
\end{prop}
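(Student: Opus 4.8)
The plan is to compute the open locus $W_{\mathrm{iso}}\subseteq W:=H\times X$ over which $f$ is an isomorphism, and then to descend this information to $H$ by projecting. Being an isomorphism is local on the target, so for an open $U\subseteq H$ the morphism $f$ is an isomorphism over $U\times X$ if and only if $U\times X\subseteq W_{\mathrm{iso}}$. Writing $C:=W\setminus W_{\mathrm{iso}}$ for the closed non-isomorphism locus and $\pi_{H}\colon H\times X\to H$ for the projection, this says $U\cap\pi_{H}(C)=\emptyset$. Hence the largest admissible $U$ is the largest open contained in $H\setminus\pi_{H}(C)$, namely
\[
U=H\setminus\overline{\pi_{H}(C)}.
\]
The point to watch is that, since $X$ is affine rather than proper, $\pi_{H}$ is not a closed map and $\pi_{H}(C)$ need not be closed; taking the closure is therefore essential, and it is precisely the closure that will be computable by elimination.

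First I would compute $C$, equivalently the open set $W_{\mathrm{iso}}$, imitating the fibrewise criteria of Lemma \ref{lem:iso decidable} in the relative setting over $W$. Using Remark \ref{rem:cotangent} I would form the coherent sheaf $\Omega_{Z/W}$; its support is a closed subset of $Z$, and because $f$ is projective, hence proper and closed, the image $f(\Supp\Omega_{Z/W})$ is a computable closed subset of $W$. Over the complement $W_{1}$, the morphism $f$ is unramified and proper, hence finite, so by Remark \ref{rem:push} I can compute the coherent $\cO_{W_{1}}$-module $\cM:=(f_{*}\cO_{Z})|_{W_{1}}$ together with its structure map $\sigma\colon\cO_{W_{1}}\to\cM$. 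Since a finite morphism is recovered from $f_{*}\cO_{Z}$ as a sheaf of algebras, and since an isomorphism is in particular unramified (so $W_{\mathrm{iso}}\subseteq W_{1}$), the morphism $f$ is an isomorphism exactly where $\sigma$ is an isomorphism, i.e. over $W_{1}\setminus(\Supp\Ker\sigma\cup\Supp\Coker\sigma)$; here $\Ker\sigma$ and $\Coker\sigma$ are coherent, so their supports are closed and computable. This yields $W_{\mathrm{iso}}$, and hence a defining ideal $J\subseteq\cO(W)$ of the closed set $C=W\setminus W_{\mathrm{iso}}$.

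Finally I would compute $\overline{\pi_{H}(C)}$. Both $H$ and $X$ are finite-type affine $k$-schemes, so $\cO(W)=\cO(H)\otimes_{k}\cO(X)$ is a finitely generated $k$-algebra and $\cO(H)\hookrightarrow\cO(W)$; the closure of the image of $V(J)$ under $\Spec\cO(W)\to\Spec\cO(H)$ is cut out by the contraction $J\cap\cO(H)$, which is an elimination ideal computable by a Gröbner basis calculation eliminating the coordinates of $X$. Thus $U=H\setminus V(J\cap\cO(H))$, and the identities above show both that $f$ is an isomorphism over $U\times X$ (since $U\cap\pi_{H}(C)=\emptyset$) and that $U$ is the largest open with this property (any open $U'$ that works satisfies $U'\subseteq H\setminus\pi_{H}(C)$, hence lies in its interior $H\setminus\overline{\pi_{H}(C)}=U$). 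The only genuinely new ingredient beyond Lemma \ref{lem:iso decidable} is the passage from the closed set $C$ in $H\times X$ to the open set $U$ in $H$, and the main obstacle is conceptual rather than computational: one must recognise that because $X$ is not proper the naive candidate $H\setminus\pi_{H}(C)$ is neither open nor correct, and that taking the closure $\overline{\pi_{H}(C)}$ is exactly what produces the largest open over whose product with $X$ the morphism is an isomorphism.
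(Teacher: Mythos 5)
Your proof is correct, and at the level of ingredients it is the same as the paper's: cut out the unramified (hence finite) locus of $f$ using relative differentials, cut out the isomorphism locus inside it using $f_{*}\cO_{Z}$, and push the complement down to $H$. However, you treat the two delicate points differently, and in both cases your version is the one that actually works. First, the paper takes $C_{1}:=\Supp(f_{*}\Omega_{Z/H\times X})$, while you take the image $f(\Supp\Omega_{Z/H\times X})$, which is closed by properness and computable by elimination. These differ: pushforward can annihilate a nonzero sheaf along positive-dimensional fibers. For the projection $Z=\PP^{1}\times(H\times X)\to H\times X$ one has $f_{*}\Omega_{Z/H\times X}=0$ (since $\Omega_{Z/H\times X}$ restricts to $\cO_{\PP^{1}}(-2)$ on fibers) and $f_{*}\cO_{Z}=\cO_{H\times X}$ invertible, so the paper's recipe would output $U=H$ even though $f$ is nowhere an isomorphism; your $C$ catches this. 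Second, the paper's final answer is $H\setminus p_{H}(C_{1}\cup C_{2})$ with no closure; as you stress, $p_{H}$ is not a closed map because $X$ is affine rather than proper, so this set need not be open, and when it is not open it is also not the largest admissible $U$ — your $H\setminus\overline{\pi_{H}(C)}$ is its interior, and your elimination-ideal computation of the closure also supplies the computability of this projection step, which the paper leaves implicit. Two minor remarks: your criterion that $\sigma\colon\cO_{W_{1}}\to f_{*}\cO_{Z}$ be an isomorphism agrees, on the finite locus, with the paper's criterion that $f_{*}\cO_{Z}$ be invertible (Lemma \ref{lem: invertible locus}), so that difference is cosmetic; and since $W_{1}$ is not affine, you should either compute $\sigma$ over all of the affine $W$ and note that one still has $C=f(\Supp\Omega_{Z/H\times X})\cup\Supp\Ker\sigma\cup\Supp\Coker\sigma$ (each of the three closed sets avoids $W_{\mathrm{iso}}$, and outside their union $f$ is finite with $\sigma$ an isomorphism), or work on principal affine opens covering $W_{1}$ and take closures in $W$; either way your defining ideal $J$ of $C$ is computable.
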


\begin{proof}
We first compute the closed subset $C_{1}:=\Supp(f_{*}\Omega_{Z/H\times X})$
of $H\times X$ (see Remarks \ref{rem:cotangent} and \ref{rem:push}).
Next we compute the closed subset $C_{2}:=(H\times X)\setminus V$,
where $V$ is the invertible locus of $f_{*}\cO_{Z}$ (see Lemma \ref{lem: invertible locus}).
The desired open subset $U\subset H$ is $H\setminus p_{H}(C_{1}\cup C_{2})$,
where $p_{H}$ denotes the projection $H\times X\to H$. Indeed, obviously
$H\setminus p_{H}(C_{1}\cup C_{2})$ is contained in the desired subset.
On the other hand, $f$ is unramified (in particular, finite and affine)
over $H\setminus p_{H}(C_{1})$. That $f_{*}\cO_{Z}$ is invertible
on $H\setminus p_{H}(C_{1}\cup C_{2})$ means that $f$ is an isomorphism
over this open subset. 
\end{proof}
\begin{lem}
\label{lem: invertible locus}There is an algorithm to compute the
invertible locus of a coherent sheaf on an affine scheme, that is,
the largest open subset on which the sheaf is invertible. 
\end{lem}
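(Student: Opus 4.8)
The plan is to compute the invertible locus of a coherent sheaf $\cM$ on an affine scheme $\Spec A$ by working with a finite free presentation of the corresponding finitely generated $A$-module $M$, and by translating the geometric condition ``$\cM$ is locally free of rank one'' into conditions on Fitting ideals that can be checked by Gröbner basis computations. Recall that $\cM$ is invertible precisely when it is locally free of rank one, i.e.\ when every stalk $\cM_{\fp}$ is free of rank one over $\cO_{\Spec A,\fp}$.

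The key observation is the standard characterization via Fitting ideals. For a point $\fp \in \Spec A$, the stalk $\cM_{\fp}$ is generated by a single element if and only if $\Fitt_{1}(M)$ is not contained in $\fp$, and the condition that the stalk be locally \emph{free} of rank one is governed by the vanishing of $\Fitt_{0}(M)$ together with the nonvanishing of $\Fitt_{1}(M)$. More precisely, I would use the criterion (as in \cite[II, Exercise 5.8]{hartshorne1977algebraic}, the same tool invoked in Lemma \ref{lem:iso decidable}) that $M$ is locally free of rank one at $\fp$ if and only if $(\Fitt_{0}(M))_{\fp} = 0$ and $(\Fitt_{1}(M))_{\fp} = A_{\fp}$. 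Thus the invertible locus is
\[
V(\Fitt_{1}(M)) = \emptyset \text{ locally, and } \Fitt_{0}(M) = 0 \text{ locally,}
\]
and the largest open subset on which both hold can be described explicitly. Concretely, I would take the open set
\[
U := (\Spec A \setminus V(\Fitt_{1}(M))) \setminus \Supp(\text{the subsheaf cut out by } \Fitt_{0}(M)),
\]
realized as a union of basic open sets $D(g)$ where $g$ ranges over generators of $\Fitt_{1}(M)$, intersected with the locus where $\Fitt_{0}(M)$ localizes to zero.

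The concrete steps I would carry out are: first, compute a finite free presentation $A^{s} \xrightarrow{\Phi} A^{r} \to M \to 0$ from the given generators and relations of $M$, which is available since $A$ is a computable quotient of a polynomial ring over $k$ and Gröbner basis methods apply. Second, compute the Fitting ideals $\Fitt_{0}(M)$ and $\Fitt_{1}(M)$ as the ideals generated by the $r$-by-$r$ and $(r-1)$-by-$(r-1)$ minors of $\Phi$ respectively; these are explicit polynomial computations. Third, for each generator $g$ of $\Fitt_{1}(M)$, work on the basic open affine $D(g) = \Spec A_{g}$; over $A_{g}$ the module $M_{g}$ is generated by at most one element (since $\Fitt_{1}$ becomes the unit ideal there), so $M_{g}$ is a cyclic module, and invertibility is then equivalent to $M_{g}$ being free of rank one, which holds exactly where $\Fitt_{0}(M_{g}) = 0$. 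Fourth, compute within $A_{g}$ the vanishing locus of $\Fitt_{0}(M)_{g}$; removing it from $D(g)$ gives the invertible locus inside $D(g)$. Finally, glue these open pieces over all generators $g$ of $\Fitt_{1}(M)$ to obtain the global invertible locus $U$ as an open subscheme.

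The main obstacle I expect is the bookkeeping at the gluing and localization stage rather than any deep mathematical difficulty: one must verify that the local description is independent of the chosen generators $g$ of $\Fitt_{1}(M)$ and that the pieces patch to a genuine open subscheme, and one must check that ``$\Fitt_{0}(M)_{g}$ vanishes as an ideal of $A_{g}$'' is algorithmically decidable. The latter reduces to saturation and ideal-membership computations over $A$ --- specifically, testing whether $\Fitt_{0}(M) \subset \sqrt{(\,\cdot\,)}$ after inverting $g$, which is handled by the standard algorithm computing the saturation $(\,I : g^{\infty}\,)$ and the ideal-membership test referenced in Section \ref{sec:Preliminaries}. Once these routine but slightly delicate localization computations are organized, the invertible locus is produced explicitly as a finite union of basic open sets with their defining data, completing the algorithm.
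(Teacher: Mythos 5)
Your core idea---characterizing invertibility stalkwise by ``$\Fitt_{0}(M)_{\fp}=0$ and $\Fitt_{1}(M)_{\fp}=A_{\fp}$''---is a correct criterion, and it gives a genuinely different (and arguably cleaner) route than the paper's. The paper computes three closed sets: the closure of $X\setminus\Supp(\cM)$, the zero set of $\Fitt_{1}$ of the restriction $\cM|_{X_{\red}}$, and the support of $\mathcal{T}or^{\cO_{X}}(\cO_{X_{\red}},\cM)$, invoking Matsumura's local flatness criterion to handle nilpotents; your two Fitting ideals handle a non-reduced $A$ automatically, with no passage to the reduction and no Tor computation. Two caveats on attribution and wording: the criterion you state is \emph{not} \cite[II, Exercise 5.8]{hartshorne1977algebraic} (that result concerns rank functions and needs reducedness); it is the standard fact that a finitely presented module is projective of rank one iff $\Fitt_{0}=0$ and $\Fitt_{1}$ is the unit ideal (e.g.\ Eisenbud, Prop.\ 20.8). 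Also, $g\in\Fitt_{1}(M)$ makes $M_{g}$ locally cyclic at every prime, not necessarily cyclic over $A_{g}$, though this is inessential.

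The genuine error is in your step Four, and it contradicts your own displayed formula for $U$. Removing the vanishing locus $V(\Fitt_{0}(M)_{g})$ from $D(g)$ does \emph{not} give the invertible locus: at a prime $\fp\in D(g)$ with $\Fitt_{0}(M)\not\subset\fp$, the localization $\Fitt_{0}(M)_{\fp}$ is the unit ideal, and since $M_{\fp}\cong A_{\fp}/\Fitt_{0}(M)_{\fp}$ (cyclicity from $\Fitt_{1}$), this means $M_{\fp}=0$. So $D(g)\setminus V(\Fitt_{0}(M))$ is the locus where the sheaf \emph{vanishes}---the opposite of what you want. Because the zero ideal is contained in every prime, the condition $\Fitt_{0}(M)_{\fp}=0$ forces $\fp\in V(\Fitt_{0}(M))$: the invertible locus lies \emph{inside} $V(\Fitt_{0}(M))$, not in its complement. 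What you must remove is the support of the ideal $\Fitt_{0}(M)$ regarded as a module, i.e.\ $V\bigl((0:_{A}\Fitt_{0}(M))\bigr)$, which is what your display with $\Supp$ actually says; the annihilator $(0:_{A}\Fitt_{0}(M))$ is computable by Gr\"obner bases. With this fix the covering-and-gluing discussion collapses entirely: the algorithm outputs the single open set
\[
U=\Spec A\setminus\bigl(V(\Fitt_{1}(M))\cup V((0:_{A}\Fitt_{0}(M)))\bigr),
\]
with no patching over generators $g$. Relatedly, your closing reduction---testing whether $\Fitt_{0}(M)_{g}=0$ via the saturation $(0:g^{\infty})$---only decides whether \emph{all} of $D(g)$ lies in the invertible locus; it cannot produce the locus when it meets $D(g)$ in a proper open subset, so it does not substitute for the annihilator computation.
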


\begin{proof}
Let $X$ be an affine scheme and let $\cM$ be a coherent sheaf on
$X$. We first compute the closed subset 
\[
C_{1}:=\overline{X\setminus\Supp(\cM)}\subset X
\]
(see Remark \ref{rem:closure}). Its complement $X\setminus C_{1}$
is the largest open subset of $X$ that is included in $\Supp(\cM)$.
We then put $\cM':=\cM|_{X_{\red}}$ and compute the closed subset
\[
C_{2}:=V(\Fitt_{1}(\cM')),
\]
where $\Fitt_{1}$ denotes the first Fitting ideal. Its complement
$X\setminus C_{2}$ is the locus of points $x\in X$ where $\cM'_{x}$
is generated by one element as an $\cO_{X,x}$-module. Finally we
compute 
\[
C_{3}:=\Supp\left(\mathcal{T}or^{\cO_{X}}(\cO_{X_{\red}},\cM)\right).
\]
From \cite[Th.\ 22.3]{matsumura1989commutative}, its complement $X\setminus C_{3}$
is the locus where $\cM|_{X}$ is flat. Now the open subset 
\[
U:=X\setminus(C_{1}\cup C_{2}\cup C_{3})
\]
is the largest open subset such that 
\begin{itemize}
\item $\Supp(\cM|_{U})=U$,
\item for every $x\in U$, $\cM_{x}$ is generated by one element as an
$\cO_{X,x}$-module,
\item $\cM|_{U}$ is a flat $\cO_{U}$-module.
\end{itemize}
Therefore, $U$ is the desired open subset. We can compute $C_{1}$,
$C_{2}$ and $C_{3}$ by some algorithms, for example, ones implemented
to Macaulay2 \cite{graysonmacaulay2}. 
\end{proof}
\begin{rem}
\label{rem:closure}Let $X=\Spec R$ and let $C=V(f_{1},\dots,f_{n})\subset X$
be a closed subset. Then the closed subset $\overline{X\setminus C}$
is defined by the ideal 
\[
\bigcap_{i=1}^{n}\Ker(R\to R_{f_{i}}).
\]
Indeed, $X\setminus C$ is covered by the affine open subsets $U_{i}=\{f_{i}\ne0\}$
and we have $\overline{X\setminus C}=\bigcup\overline{U_{i}}$. Each
$\overline{U_{i}}$ is defined by the ideal $\Ker(R\to R_{f_{i}})$. 
\end{rem}

We conclude:
\begin{prop}
For each Hilbert polynomial $P$, we can explicitly compute open subschemes
$\ulHom_{P}(X,Y)$ and $\ulIso_{P}(X,Y)$ of $\Hilb_{P}(X\times Y)$
by means of explicit presentation of open subsets $\ulHom_{P}(X,Y)_{K}$
and $\ulIso_{P}(X,Y)_{K}$ of $\Hilb_{P}(X\times Y)_{K}$ for each
$K$. 
\end{prop}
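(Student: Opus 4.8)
The plan is to assemble the ingredients assembled over the course of this section; no new idea is needed beyond careful bookkeeping. First I would apply Proposition \ref{prop:univ fam} to the product $X\times Y\subset\PP^{mn-1}$ (embedded via the Segre embedding) and to the given polynomial $P$. This produces an integer $d$, the defining equations of $\Hilb_{P}(X\times Y)_{K}$ inside each affine chart $U_{K}$, and the defining equations of the universal family $\cZ_{X\times Y,K}\subset\Hilb_{P}(X\times Y)_{K}\times X\times Y$. These are exactly the data on which the rest of the computation operates.

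Next, fixing a chart $K$, I would compute $\ulHom_{P}(X,Y)_{K}$ as the finite intersection $\bigcap_{i=1}^{m}U_{i}$ described just before the statement. Composing the inclusion of $\cZ_{X\times Y,K}$ with the projection to $\Hilb_{P}(X\times Y)_{K}\times X$ and restricting over each standard affine chart $X_{i}$ yields a projective morphism $g_{i}\colon g^{-1}(X_{i})\to\Hilb_{P}(X\times Y)_{K}\times X_{i}$ whose target is a product of affine schemes. The preceding proposition, together with Remarks \ref{rem:cotangent} and \ref{rem:push} and Lemma \ref{lem: invertible locus}, then computes the largest open subset $U_{i}$ over which $g_{i}$ is an isomorphism, and the finite intersection $\bigcap_{i}U_{i}$ is again computable as the complement of a union of finitely many explicitly presented closed subsets. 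For the Iso scheme I would use the identity $\ulIso_{P}(X,Y)_{K}=\ulHom_{P}(X,Y)_{K}\cap\ulHom_{P}(Y,X)_{K}$: running the same procedure with the roles of $X$ and $Y$ interchanged, via the canonical isomorphism $\Hilb(X\times Y)\cong\Hilb(Y\times X)$ and the standard affine covering $Y=\bigcup_{j}Y_{j}$, produces $\ulHom_{P}(Y,X)_{K}$, and intersecting the two explicitly presented open subsets gives $\ulIso_{P}(X,Y)_{K}$.

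Finally I would glue over all $K$. Since each $\ulHom_{P}(X,Y)_{K}$ and $\ulIso_{P}(X,Y)_{K}$ is by construction an open subset of $\Hilb_{P}(X\times Y)_{K}=\Hilb_{P}(X\times Y)\cap U_{K}$, and the charts $U_{K}$ already glue to the Hilbert scheme through the transition data computed in Section \ref{sec:Hilbert-schemes}, the families $\{\ulHom_{P}(X,Y)_{K}\}_{K}$ and $\{\ulIso_{P}(X,Y)_{K}\}_{K}$ automatically patch to the global open subschemes $\ulHom_{P}(X,Y)=\bigcup_{K}\ulHom_{P}(X,Y)_{K}$ and $\ulIso_{P}(X,Y)=\bigcup_{K}\ulIso_{P}(X,Y)_{K}$. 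I do not expect a genuine obstacle here: the only point requiring comment is the coherence of the local pieces across overlapping charts, and this is automatic because $\ulHom_{P}(X,Y)$ and $\ulIso_{P}(X,Y)$ are intrinsically defined open subschemes of the single scheme $\Hilb_{P}(X\times Y)$, of which each $\ulHom_{P}(X,Y)_{K}$ and $\ulIso_{P}(X,Y)_{K}$ is, by definition, the restriction to $U_{K}$. Thus the content of the proposition lies entirely in the per-chart computability established above, and the global statement follows.
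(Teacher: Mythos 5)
Your proposal is correct and follows essentially the same route as the paper: the proposition there is stated as the conclusion of exactly the discussion you reproduce, namely applying Proposition \ref{prop:univ fam} to the Segre-embedded product to get the universal family, computing each $\ulHom_{P}(X,Y)_{K}$ as the finite intersection $\bigcap_{i}U_{i}$ over the standard affine charts of $X$ via the algorithm for projective morphisms over affine targets (with Remarks \ref{rem:cotangent}, \ref{rem:push} and Lemma \ref{lem: invertible locus}), and obtaining the Iso scheme as $\ulHom_{P}(X,Y)_{K}\cap\ulHom_{P}(Y,X)_{K}$. Your closing observation that the per-chart open subsets patch automatically, being restrictions of intrinsically defined open subschemes of $\Hilb_{P}(X\times Y)$, is likewise exactly what the paper's phrasing ``by means of explicit presentation \dots for each $K$'' implicitly relies on.
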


\begin{rem}
\label{rem:2nd proof semidecidability}Using Iso schemes, we can give
an alternative proof of the semi-decidability of the isomorphism,
which was proved in Section \ref{sec:Semidecidability Iso Prob}.
We enumerate all the Hilbert polynomials as $P_{i}$, $i\in\ZZ_{>0}$.
For each $i>0$, we compute the Iso scheme $\ulIso_{P_{i}}(X,Y)$.
We stop if we get a non-empty Iso scheme. 
\end{rem}

\section{\label{sec:One-dimensional-schemes}One-dimensional schemes}

In this section, we show that the isomorphism problem for one-dimensional
projective schemes and the one for one-dimensional reduced quasi-projective
schemes are decidable. This generalizes the known case of smooth irreducible
curves (\cite[Lem.\ 5.1]{baker2005finiteness} for the case of genus
$\ne1$ and the MathOverflow thread mentioned in Introduction for
elliptic curves). We need the following version of the Riemann-Roch
formula for one-dimensional projective schemes.
\begin{prop}[{\cite[Exercise 18.4.S]{vakilfoundations}}]
\label{prop:RR}Let $X$ be a one-dimensional projective scheme and
let $X_{i}$, $1\le i\le l$, be its one-dimensional irreducible components
given with reduced structure and let $\eta_{i}$ be the generic point
of $X_{i}$. Let $\cL$ be an invertible sheaf on $X$ and let $\cF$
be a coherent sheaf on $X$. Then 
\begin{equation}
\chi(\cF\otimes\cL)-\chi(\cF)=\sum_{i=1}^{l}\length(\cF_{\eta_{i}})\deg(\cL|_{X_{i}}).\label{eq:RR}
\end{equation}
Here $\length(\cF_{\eta_{i}})$ is the length of $\cF_{\eta_{i}}$
as an $\cO_{X,\eta_{i}}$-module. In particular, 
\begin{equation}
\chi(\cL)-\chi(\cO)=\sum_{i=1}^{l}\length(\cO_{X,\eta_{i}})\deg(\cL|_{X_{i}}).\label{eq:RR-1}
\end{equation}
\end{prop}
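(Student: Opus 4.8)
The plan is to prove the Riemann–Roch formula \eqref{eq:RR} for a coherent sheaf and then obtain \eqref{eq:RR-1} as the special case $\cF=\cO_X$. Both sides of \eqref{eq:RR} are additive in $\cF$ in short exact sequences: for $\chi$ this is the additivity of the Euler characteristic on a one-dimensional projective scheme, and for the right-hand side this is the additivity of length at each generic point $\eta_i$ (the functor $\cF\mapsto\cF_{\eta_i}$ is exact, and length is additive on short exact sequences of finite-length modules). The strategy is therefore to reduce to a simple class of sheaves using d\'evissage, proving the formula on that class directly.

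First I would fix $\cL$ and define, for a coherent sheaf $\cF$, the quantity
\[
\delta(\cF):=\chi(\cF\otimes\cL)-\chi(\cF)-\sum_{i=1}^{l}\length(\cF_{\eta_i})\deg(\cL|_{X_i}).
\]
Tensoring with the invertible sheaf $\cL$ is exact, so $\cF\mapsto\chi(\cF\otimes\cL)$ is additive; hence $\delta$ is additive on short exact sequences. By the standard d\'evissage for coherent sheaves on a Noetherian scheme, every $\cF$ has a finite filtration whose successive quotients are of the form $(\iota_W)_*\cG$, where $W\subset X$ is an integral closed subscheme and $\cG$ is a torsion-free rank-one (hence generically invertible) sheaf on $W$; by additivity it suffices to prove $\delta(\cF)=0$ for such generators. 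There are two cases according to $\dim W$.

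For the case $\dim W=0$ the sheaf $\cF$ is supported on finitely many closed points, so $\cF\otimes\cL\cong\cF$ (an invertible sheaf is trivial on an Artinian local ring), giving $\chi(\cF\otimes\cL)=\chi(\cF)$; moreover $\cF_{\eta_i}=0$ for every one-dimensional component, so the sum vanishes and $\delta(\cF)=0$. For the case $\dim W=1$, the component $W$ equals some $X_i$ with reduced structure, and here I would invoke the classical degree theory on the integral projective curve $W$: for an invertible sheaf $\cM$ on $W$ one has $\chi(\cM)-\chi(\cO_W)=\deg\cM$, and more generally for a rank-one torsion-free $\cG$ the difference $\chi(\cG\otimes\cL|_W)-\chi(\cG)=\deg(\cL|_W)$, since twisting by the degree-zero--shifting behaviour only sees $\deg(\cL|_{X_i})$. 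Because $\cF=(\iota_W)_*\cG$ has $\cF_{\eta_j}=0$ for $j\ne i$ and $\length(\cF_{\eta_i})=1$, the right-hand sum collapses to exactly $\deg(\cL|_{X_i})$, matching the Euler-characteristic difference, so $\delta(\cF)=0$.

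The main obstacle is the $\dim W=1$ generator case, i.e.\ pinning down the degree computation $\chi(\cG\otimes\cL|_{X_i})-\chi(\cG)=\deg(\cL|_{X_i})$ cleanly when $W=X_i$ may be singular and $\cG$ is merely torsion-free of rank one rather than locally free. The clean way to handle this is to define $\deg$ on $X_i$ via Euler characteristics in the first place, reducing the claim to the invariance of the degree of $\cL|_{X_i}$ under twisting by a fixed rank-one sheaf; alternatively one normalizes $X_i$, pulls back, and uses that pushforward along a finite birational map changes $\chi$ by a global constant independent of the twist. Once the two generator cases are settled, additivity of $\delta$ under d\'evissage yields \eqref{eq:RR} for all $\cF$, and \eqref{eq:RR-1} follows by taking $\cF=\cO_X$, for which $\length(\cO_{X,\eta_i})$ is precisely the multiplicity of the component $X_i$ in $X$.
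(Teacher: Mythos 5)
Your proposal is correct in outline, but it takes a genuinely different route from the paper's proof, and the two are worth contrasting. You run a d\'evissage on the sheaf $\cF$: filter $\cF$ so the graded pieces are $(\iota_W)_*\cG$ with $W\subset X$ integral and $\cG$ torsion-free of rank one, reducing everything to the finite-length case (trivial) and to Riemann--Roch for rank-one torsion-free sheaves on the possibly singular integral curves $X_i$. The paper instead keeps $\cF$ arbitrary and does its d\'evissage on the scheme: it filters $\cF$ by the powers $\cI^j\cF$ of the nilpotent ideal of $X_{\red}$ to reduce to reduced $X$, then writes $\cL=\cO_X(\sum_j n_jp_j)$ with the $p_j$ smooth closed points of $X$ at which $\cF$ is locally free, and inducts on $\sum_j|n_j|$ via the sequences $0\to\cF\otimes\cO_X(-p)\to\cF\to\cF|_p\to0$. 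The trade-off: the paper never has to prove anything on a singular integral curve, because the divisor representing $\cL$ is moved into the smooth locus (a moving lemma it uses silently, valid over the infinite field $k=\Qbar$), whereas your route avoids moving divisors entirely but must settle the singular-curve step that you correctly flag as the main obstacle. That step is genuinely the crux of your argument and is still only sketched; to close it, either of your two suggestions does work when made precise. Via normalization $\nu\colon\widetilde{X_i}\to X_i$: the natural map $\cG\to\nu_*\bigl(\nu^*\cG/\mathrm{tors}\bigr)$ is injective with finite-length cokernel, and finite-length sheaves are unchanged (up to isomorphism) by twisting with $\cL|_{X_i}$, so the claim follows from classical Riemann--Roch for the invertible sheaf $\nu^*\cG/\mathrm{tors}$ on the smooth curve $\widetilde{X_i}$. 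Alternatively, with $\deg\cM:=\chi(\cM)-\chi(\cO_{X_i})$, embed $\cG$ and $\cO_{X_i}$ into the sheaf of rational functions so that they contain a common subsheaf with finite-length quotients; twisting by $\cL|_{X_i}$ then shifts $\chi(\cG)$ and $\chi(\cO_{X_i})$ by the same amount, which is exactly the invariance you allude to. With either completion supplied, your proof is sound and, arguably, more canonical than the paper's, at the price of invoking d\'evissage and singular-curve degree theory rather than elementary point-by-point induction.
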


\begin{proof}
The outline of the proof is written in \cite{vakilfoundations}. For
the sake of completeness, we write it down in more details. We first
observe that zero-dimensional connected components of $X$ do not
contribute to either side of (\ref{eq:RR}). Therefore we may suppose
that $X$ has only one-dimensional irreducible components, that is,
$X=\bigcup_{i=1}^{l}X_{i}$. Note also that both sides of (\ref{eq:RR})
are also additive for short exact sequences; if $v(\cF)$ denotes
either side of the equality, for a short exact sequence of coherent
$\cO_{X}$-modules,
\[
0\to\cF_{1}\to\cF_{2}\to\cF_{3}\to0,
\]
we have $v(\cF_{2})=v(\cF_{1})+v(\cF_{3})$. This implies that for
a filtration of coherent sheaves,
\[
\cF=\cF_{0}\supset\cF_{1}\supset\cdots\supset\cF_{n}=0,
\]
we have 
\begin{equation}
v(\cF)=\sum_{i}v(\cF_{i}/\cF_{i+1}).\label{eq:v add}
\end{equation}
Let $\cI\subset\cO_{X}$ be the defining ideal sheaf of the associated
reduced scheme $X_{\red}$ of $X$, which is necessarily nilpotent.
We apply equality (\ref{eq:v add}) to the filtration
\[
\cF\supset\cI\cF\supset\cI^{2}\cF\supset\cdots\supset\cI^{n}\cF=0.
\]
Thus it suffices to show (\ref{eq:RR}) for sheaves $\cI^{i}\cF/\cI^{i+1}\cF$.
Since they are $\cO_{X_{\red}}$-modules, in turn, it suffices to
show the proposition in the case where $X$ is reduced. Let us now
write $\cL=\cO_{X}(\sum_{j=1}^{m}n_{j}p_{j})$, where $n_{i}$ are
integers and $p_{i}$ are closed points of $X$ at which $X$ is smooth
and $\cF$ is locally free. We prove (\ref{eq:RR}) in this situation
by induction on $n:=\sum_{j=1}^{m}|n_{j}|$. If $n=0$, this is obvious.
If $n=1$, then $\cL$ is either $\cO_{X}(p)$ or $\cO_{X}(-p)$.
In the latter case, from the exact sequence
\[
0\to\cF\otimes\cL\to\cF\to\cF|_{p}\to0,
\]
we have
\[
\chi(\cF\otimes\cL)-\chi(\cF)=-\chi(\cF|_{p})=-\length(\cF_{\eta_{i}}),
\]
where $i$ is such that $p\in X_{i}$. If $\cL=\cO_{X}(p)$ and if
we put $\cF':=\cF\otimes\cL$, then 
\[
\chi(\cF\otimes\cL)-\chi(\cF)=-\left(\chi(\cF'\otimes\cO_{X}(-p))-\chi(\cF')\right)=\length(\cF_{\eta_{i}}).
\]
Thus (\ref{eq:RR}) holds when $n=1$. For general $n\ge1$, if we
write $\cL=\cL'\otimes\cL''$ with $\cL'$ and $\cL''$ having smaller
$n$, we have
\begin{align*}
\chi(\cF\otimes\cL)-\chi(\cF) & =\left(\chi((\cF\otimes\cL')\otimes\cL'')-\chi(\cF\otimes\cL')\right)+\left(\chi(\cF\otimes\cL')-\chi(\cF)\right)\\
 & =\sum_{i=1}^{l}\length(\cF_{\eta_{i}})\deg(\cL'|_{X_{i}})+\sum_{i=1}^{l}\length(\cF_{\eta_{i}})\deg(\cL''|_{X_{i}})\\
 & =\sum_{i=1}^{l}\length(\cF_{\eta_{i}})\deg(\cL|_{X_{i}}).
\end{align*}
\end{proof}
Consider two one-dimensional projective schemes $X\subset\PP^{m-1}$
and $Y\subset\PP^{n-1}$, which have very ample invertible sheaves
$\cL$ and $\cM$ corresponding to the given embeddings to projective
spaces respectively. Suppose that there exists an isomorphism $f\colon X\to Y$.
We will bound possibilities for the Euler characteristic of $\cL\otimes f^{*}\cM$
without using data of $f$. 
\begin{cor}
Let $X_{i}$, $1\le i\le l$ and $Y_{j}$, $1\le j\le m$ be the one-dimensional
irreducible components of $X$ and $Y$ respectively. We give them
with reduced structure. Let $\eta_{i}$ be the generic point of $X_{i}$.
Let $d:=\sum_{j=1}^{m}\deg(\cM|_{Y_{j}})$. Then there exists a partition
of $d$ into positive integers, $d=\sum_{i=1}^{l}d_{i}$, such that
\[
\chi(\cL\otimes f^{*}\cM)=\chi(\cO_{X})+\sum_{i=1}^{l}\length(\cO_{X,\eta_{i}})\left(\deg(\cL|_{X_{i}})+d_{i}\right).
\]
\end{cor}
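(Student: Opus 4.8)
The plan is to apply the Riemann--Roch formula \eqref{eq:RR-1} of Proposition \ref{prop:RR} directly to the invertible sheaf $\cL\otimes f^{*}\cM$ on $X$. This immediately gives
\[
\chi(\cL\otimes f^{*}\cM)-\chi(\cO_{X})=\sum_{i=1}^{l}\length(\cO_{X,\eta_{i}})\,\deg\bigl((\cL\otimes f^{*}\cM)|_{X_{i}}\bigr).
\]
Since degree is additive under tensor product of invertible sheaves, the $i$-th degree splits as $\deg(\cL|_{X_{i}})+\deg((f^{*}\cM)|_{X_{i}})$. Thus the only task is to set $d_{i}:=\deg((f^{*}\cM)|_{X_{i}})$ and to verify that the $d_{i}$ are positive integers summing to $d$.

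First I would record that, being an isomorphism, $f$ carries the one-dimensional irreducible components of $X$ bijectively onto those of $Y$; in particular $l=m$, and there is a permutation $\sigma$ with $f(X_{i})=Y_{\sigma(i)}$ as sets. Moreover $f$ induces an isomorphism $X_{\red}\isoto Y_{\red}$, which restricts to an isomorphism $f_{i}\colon X_{i}\isoto Y_{\sigma(i)}$ of reduced, integral projective curves for each $i$.

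Next I would identify $(f^{*}\cM)|_{X_{i}}$ with $f_{i}^{*}(\cM|_{Y_{\sigma(i)}})$ via the compatibility of pullback with restriction applied to the commuting square formed by $f$, $f_{i}$, and the closed immersions of the reduced components. Since $f_{i}$ is an isomorphism it preserves Euler characteristics, and because on an integral projective curve the degree of an invertible sheaf equals $\chi(-)-\chi(\cO)$ (the case $l=1$, $\length=1$ of \eqref{eq:RR-1}), we obtain $d_{i}=\deg(\cM|_{Y_{\sigma(i)}})$. As $\cM$ is very ample, its restriction to the integral curve $Y_{\sigma(i)}$ is very ample, hence of positive degree, so each $d_{i}$ is a positive integer. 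Finally, summing over $i$ and reindexing by the bijection $\sigma$ gives $\sum_{i=1}^{l}d_{i}=\sum_{j=1}^{m}\deg(\cM|_{Y_{j}})=d$, which completes the argument.

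I expect no serious obstacle once Proposition \ref{prop:RR} is in hand: the substance is the bookkeeping that $f$ permutes the one-dimensional components and restricts to isomorphisms of their reduced structures, together with the standard facts that a very ample sheaf has positive degree on each component and that degree is an isomorphism invariant. The mildest subtlety is the possible presence of zero-dimensional components of $X$ or $Y$, but these contribute equally to $\chi(\cL\otimes f^{*}\cM)$ and $\chi(\cO_{X})$ and do not enter the sum in \eqref{eq:RR-1}, so they may be safely ignored.
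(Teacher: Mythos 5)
Your proof is correct and follows essentially the same route as the paper: the paper's proof is the single sentence ``put $d_{i}:=\deg(f^{*}\cM|_{X_{i}})$ and apply the second equality of Proposition \ref{prop:RR} with $\cL\otimes f^{*}\cM$ in place of $\cL$.'' The only difference is that you spell out the bookkeeping the paper leaves implicit --- that the isomorphism $f$ permutes the one-dimensional components, so the $d_{i}$ are positive integers summing to $d$ --- which is a worthwhile (and correct) elaboration rather than a different argument.
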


\begin{proof}
We put $d_{i}:=\deg(f^{*}\cM|_{X_{i}})$ and apply the second equality
in Proposition \ref{prop:RR} with $\cL\otimes f^{*}\cM$ in place
of $\cL$.
\end{proof}
The Hilbert polynomial of $f$ (see Definition \ref{def: Hilb poly of an iso})
is equal to the Hilbert polynomial of $X$ with respect to the very
ample sheaf $\cL\otimes f^{*}\cM$. It is the polynomial $P(t)$ of
degree at most one such that 
\[
P(0)=\chi(\cO_{X})\text{ and }P(1)=\chi(\cL\otimes f^{*}\cM).
\]

\begin{thm}
\label{thm:1-dim proj}The isomorphism problem for one-dimensional
projective schemes is decidable.
\end{thm}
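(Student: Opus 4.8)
The plan is to combine the finiteness bound from the preceding corollary with the computability of Iso schemes established in Section \ref{sec:Hom and Iso schemes}. The key observation is that if $f\colon X\to Y$ is any isomorphism, then its Hilbert polynomial $P$ (in the sense of Definition \ref{def: Hilb poly of an iso}) is determined by the single number $\chi(\cL\otimes f^{*}\cM)$, since $P$ has degree at most one and $P(0)=\chi(\cO_{X})$. The corollary tells us that this Euler characteristic is controlled by a partition $d=\sum_{i=1}^{l}d_{i}$ of the fixed integer $d=\sum_{j}\deg(\cM|_{Y_{j}})$ into $l$ positive integers, together with the quantities $\chi(\cO_{X})$, $\length(\cO_{X,\eta_{i}})$ and $\deg(\cL|_{X_{i}})$, all of which depend only on $X$ and $Y$ and not on $f$.

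First I would extract the irreducible components $X_{i}$ and $Y_{j}$ with their reduced structures, which is an algorithmic operation via primary decomposition of the defining ideals. From these I would compute the intrinsic invariants $\chi(\cO_{X})$, the generic lengths $\length(\cO_{X,\eta_{i}})$, the degrees $\deg(\cL|_{X_{i}})$, and the total degree $d=\sum_{j=1}^{m}\deg(\cM|_{Y_{j}})$; these are all computable from the homogeneous coordinate rings using pushforward and Hilbert-polynomial computations as discussed in earlier remarks. Next I would enumerate the finitely many partitions $d=\sum_{i=1}^{l}d_{i}$ of $d$ into $l$ positive integers. For each such partition, the corollary gives a candidate value of $\chi(\cL\otimes f^{*}\cM)$, hence a candidate Hilbert polynomial $P(t)$ with $P(0)=\chi(\cO_{X})$ and $P(1)$ equal to that candidate value. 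This produces finitely many polynomials $P_{1},\dots,P_{N}$ with the property guaranteed by the strategy of Section \ref{sec:Hom and Iso schemes}: if $X$ and $Y$ are isomorphic, then some isomorphism has Hilbert polynomial $P_{i}$ for some $i$.

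Having the finite list $P_{1},\dots,P_{N}$, I would then compute each Iso scheme $\ulIso_{P_{i}}(X,Y)$ using the algorithm of Section \ref{sec:Hom and Iso schemes}, and check whether it is nonempty. Emptiness of a finite-type scheme presented by explicit equations on affine charts is decidable via a Gr\"obner basis computation. The conclusion is that $X\cong Y$ if and only if $\bigcup_{i=1}^{N}\ulIso_{P_{i}}(X,Y)\ne\emptyset$, so checking all finitely many $P_{i}$ decides the isomorphism problem.

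The main obstacle, and the crux of the argument, is establishing that the candidate polynomials genuinely exhaust all possible Hilbert polynomials of isomorphisms: this is precisely where the corollary is used, and it hinges on the fact that each $d_{i}=\deg(f^{*}\cM|_{X_{i}})$ is a \emph{positive} integer summing to $d$, so that only finitely many partitions occur. Positivity of $d_{i}$ follows because $\cM$ is very ample on $Y$ and $f$ is an isomorphism, so $f^{*}\cM$ is very ample on $X$ and restricts to a positive-degree sheaf on each one-dimensional component $X_{i}$; likewise $d>0$ since $\cM$ is very ample. Once finiteness of the partition set is secured, the remaining steps are routine applications of the computability results already in hand.
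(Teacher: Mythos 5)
Your proposal is correct and follows essentially the same route as the paper's proof: extract the one-dimensional components and their invariants, use the Riemann--Roch corollary to reduce the possible Hilbert polynomials of an isomorphism to the finitely many partitions $d=\sum_{i=1}^{l}d_{i}$ into positive integers, and then decide by computing the finitely many Iso schemes $\ulIso_{P_{i}}(X,Y)$ and testing emptiness. Your explicit justification that each $d_{i}=\deg(f^{*}\cM|_{X_{i}})$ is positive (via very ampleness of $f^{*}\cM$) is exactly the point the paper's corollary relies on.
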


\begin{proof}
Let $X\subset\PP^{m-1}$ and $Y\subset\PP^{n-1}$ be one-dimensional
projective schemes. We compute their one-dimensional irreducible components
with reduced structure $X_{i}$, $1\le i\le l$ and $Y_{j}$, $1\le j\le m$
respectively; there exist algorithms to compute associated reduced
schemes and (geometric) irreducible components (see \cite{eisenbud1992directmethods,chistov1986algorithm}).
Then we compute $d=\sum_{j=1}^{m}\deg(\cM|_{Y_{j}})$. 

For each partition 
\[
\lambda\colon d=d_{1}+\cdots+d_{l}
\]
 of $d$ into $l$ positive integers, we compute
\[
e_{\lambda}:=\chi(\cO_{X})+\sum_{i=1}^{l}\length(\cO_{X,\eta_{i}})\left(\deg(\cL|_{X_{i}})+d_{i}\right)
\]
and define the polynomial
\[
P_{\lambda}(t):=(e_{\lambda}-\chi(\cO_{X}))t+\chi(\cO_{X}).
\]
The polynomials $P_{\lambda}$ are the only potential Hilbert polynomials
for an isomorphism $f\colon X\to Y$ if any. For each $\lambda$,
we compute $\ulIso_{P_{\lambda}}(X,Y)$. If one of them is non-empty,
then $X$ and $Y$ are isomorphic. Otherwise, they are not isomorphic. 
\end{proof}
\begin{thm}
\label{thm: iso one-dim quasi-proj}The isomorphism problem for one-dimensional
reduced quasi-projective schemes is decidable. Here we suppose that
each quasi-projective scheme $X$ is given an embedding $X\hookrightarrow\PP^{m-1}$
and represented by two projective schemes $\overline{X}\subset\PP^{m-1}$,
the closure of $X$ in $\PP^{m-1}$, and $\overline{X}\setminus X\subset\PP^{m-1}$. 
\end{thm}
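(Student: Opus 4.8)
The plan is to reduce the quasi-projective problem to the projective case already settled in Theorem~\ref{thm:1-dim proj}. Given two one-dimensional reduced quasi-projective schemes $X$ and $Y$, each is presented by its projective closure $\overline{X}\subset\PP^{m-1}$ together with the closed complement $\overline{X}\setminus X$, and similarly for $Y$ inside some $\PP^{n-1}$. The key observation is that an isomorphism $f\colon X\isoto Y$ does not in general extend to an isomorphism $\overline{X}\isoto\overline{Y}$, so we cannot directly invoke the projective result. Instead, the natural invariant attached to a reduced one-dimensional scheme is its \emph{decomposition into finitely many isolated points and finitely many irreducible affine curves}, and an isomorphism must match these pieces up. First I would compute the irreducible components of $X$ and $Y$ with reduced structure (using \cite{eisenbud1992directmethods,chistov1986algorithm} as in the proof of Theorem~\ref{thm:1-dim proj}), separating the zero-dimensional components (isolated points) from the one-dimensional ones.

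The zero-dimensional part is easy: two finite reduced $k$-schemes are isomorphic if and only if they have the same number of points, which is algorithmically checkable by computing lengths. So the content is entirely in matching the one-dimensional reduced pieces. For each one-dimensional irreducible component, I would pass to its smooth projective model (the normalization of its closure, which is computable by \cite[Sections 15.1.1 and 15.10.2]{eisenbud1995commutative}-style algorithms together with normalization routines) and record the finite set of \emph{missing points}, i.e.\ the points of the smooth model lying over $\overline{X}\setminus X$. An isomorphism between two smooth affine curves is precisely an isomorphism of their smooth projective models carrying one finite punctured set to the other. Thus the plan is: for each pair of one-dimensional components $X_i$ and $Y_j$, form the smooth projective completions $\widetilde{X_i}$ and $\widetilde{Y_j}$, equipped with the respective finite reduced divisors $D_i$ and $E_j$ of missing points, and decide whether there is an isomorphism $\widetilde{X_i}\isoto\widetilde{Y_j}$ sending $D_i$ to $E_j$.

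To decide this last \emph{pointed} isomorphism problem, I would enlarge the Iso-scheme machinery of Section~\ref{sec:Hom and Iso schemes} to incorporate the divisor constraint. Concretely, after fixing projective embeddings of $\widetilde{X_i}$ and $\widetilde{Y_j}$, the locus of graphs $[\Gamma_f]\in\ulIso_P(\widetilde{X_i},\widetilde{Y_j})$ whose underlying isomorphism satisfies $f(D_i)=E_j$ is a closed condition cut out by requiring that the universal isomorphism over $\ulIso_P$ pull back the ideal sheaf of $E_j$ to the ideal sheaf of $D_i$; this is a computable closed subscheme of $\ulIso_P$. Since $D_i$ and $E_j$ are finite, a matching isomorphism exists for at most finitely many Hilbert polynomials, which are bounded exactly as in the proof of Theorem~\ref{thm:1-dim proj} via the Riemann--Roch formula of Proposition~\ref{prop:RR}. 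For each candidate $P$ we compute this pointed Iso scheme and test emptiness. Finally, an isomorphism $X\isoto Y$ exists if and only if there is a bijection between the one-dimensional components of $X$ and $Y$, under which each matched pair admits a pointed isomorphism of completions, together with an equality of the numbers of isolated points; since there are finitely many candidate bijections, this is decidable.

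The main obstacle is the reduction to the pointed completion problem: I must verify carefully that an abstract isomorphism of reduced affine curves corresponds bijectively to an isomorphism of their smooth projective models respecting the punctures, and that the boundary data $\overline{X}\setminus X$ genuinely recovers the set of missing points on the normalization (singular points of $\overline{X}$ introduce bookkeeping, since a single point of $\overline{X}$ may have several preimages in the normalization). Handling non-normal $\overline{X}$ requires passing through the normalization and tracking the conductor, which is the delicate algorithmic step; once the problem is phrased as a pointed isomorphism problem between smooth projective curves with marked finite sets, the Iso-scheme computation and the Riemann--Roch bound proceed just as in the projective case.
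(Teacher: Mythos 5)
Your reduction to componentwise pointed isomorphism fails at two independent points, both of which the paper's own proof is designed to avoid. First, your final criterion --- that $X\cong Y$ if and only if there is a bijection of one-dimensional components under which each matched pair admits a pointed isomorphism of smooth completions, together with equal numbers of isolated points --- is false, because the irreducible components of a reduced one-dimensional scheme can intersect one another, and a collection of isomorphisms of the individual components carries no information about how the components meet. Take $X$ to be two lines in $\PP^{2}$ meeting at a point and $Y$ two disjoint lines in $\PP^{3}$ (both are projective, hence quasi-projective, with empty boundary): each component of $X$ is isomorphic to the corresponding component of $Y$ as a pointed curve ($\PP^{1}$ with empty puncture set), yet $X$ is connected and $Y$ is not, so $X\not\cong Y$. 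Any componentwise argument must additionally match the incidence data among components, and your proposal never addresses this.

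Second, even when $X$ and $Y$ are irreducible, the equivalence you propose to ``verify carefully'' --- that an abstract isomorphism of reduced curves corresponds to an isomorphism of their smooth projective models respecting the punctures --- is not merely delicate; it is false when the curves are singular, which the hypotheses allow ($X$ is only assumed reduced). The cuspidal affine cubic $\Spec k[x,y]/(y^{2}-x^{3})$ and $\AA^{1}$ have the same smooth projective model $\PP^{1}$ with the same one-point puncture set, but they are not isomorphic. Your closing remark about ``tracking the conductor'' names exactly the missing content: recovering isomorphisms of singular curves from their normalizations requires matching the full pinching data (conductor ideals and the fibers of the normalization over the singular locus), and no argument or algorithm for this is supplied. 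The paper sidesteps both problems at once: it never decomposes $X$ into components and never normalizes it; it only blows up $\overline{X}$ at the points of $\overline{X}\setminus X$ (which leaves $X$ itself untouched, as those points lie outside $X$), so that the closure becomes smooth along the boundary. Then any isomorphism $X\isoto Y$ extends, by the valuative criterion at the boundary DVRs, to an isomorphism $\overline{X}\isoto\overline{Y}$ carrying boundary onto boundary, and the problem reduces to computing $\ulIso(\overline{X},\overline{Y})$ as in Theorem \ref{thm:1-dim proj} and intersecting with computable incidence conditions coming from the universal family.
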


\begin{proof}
Let $X\subset\PP^{m-1}$ and $Y\subset\PP^{n-1}$ be quasi-projective
one-dimensional reduced schemes and let $\overline{X}\subset\PP^{m-1}$
and $\overline{Y}\subset\PP^{n-1}$ be their closures respectively.
If $\overline{X}$ is singular at some point of $\overline{X}\setminus X$,
then we resolve this singularity by repeating blowups. Note that a
blowup of $\PP^{m-1}$ at a point is a closed subvariety of $\PP^{m-1}\times\PP^{m-2}$
and hence one of $\PP^{m(m-1)-1}$ by the Segre embedding. Thus a
blowup of $\overline{X}$ at a point has an embedding into $\PP^{m(m-1)-1}$,
which can be explicitly computed. Therefore we can replace the embedding
$X\subset\PP^{m-1}$ so that $\overline{X}$ becomes smooth at every
point of $\overline{X}\setminus X$. Similarly for $Y$. If $\overline{X}\setminus X$
and $\overline{Y}\setminus Y$ have different numbers of points, then
$X$ and $Y$ are not isomorphic. Thus, we may suppose that they have
the same number of points. Now $X$ and $Y$ are isomorphic if and
only if there exists an isomorphism $f\colon\overline{X}\to\overline{Y}$
such that $\overline{Y}\setminus Y\subset f(\overline{X}\setminus X)$.
From the assumption which we just put, $\overline{Y}\setminus Y\subset f(\overline{X}\setminus X)$
implies $\overline{Y}\setminus Y=f(\overline{X}\setminus X)$. Following
the algorithm described in the proof of Theorem \ref{thm:1-dim proj},
we can compute the Iso scheme
\[
\ulIso(\overline{X},\overline{Y})\left(=\bigcup_{\lambda}\ulIso_{P_{\lambda}}(\overline{X},\overline{Y})\right),
\]
where $\lambda$ runs over partitions of a positive integer $d$ as
in the proof of Theorem \ref{thm:1-dim proj}. In particular, since
there are only finitely many partitions, we can algorithmically compute
this Iso scheme. Recall that the Iso scheme is by definition a subscheme
of the Hilbert scheme $\Hilb(\overline{X}\times\overline{Y})$. Let
\[
\cU\subset\ulIso(\overline{X},\overline{Y})\times\overline{X}\times\overline{Y}
\]
be the universal family. Let $A\subset\cU$ be the preimage of $\overline{X}\setminus X$
by the projection $\cU\to\overline{X}$. Let us write $\overline{Y}\setminus Y=\{y_{1},\dots,y_{n}\}$
and let $B_{1},\dots,B_{n}\subset\cU$ be the preimages of $y_{1},\dots,y_{n}$
by the projection $\cU\to\overline{Y}$ respectively. Let $\pi\colon\cU\to\ulIso(\overline{X},\overline{Y})$
be the projection. For each $i$, we claim that
\[
\pi(A\cap B_{i})=\{[f]\in\ulIso(\overline{X},\overline{Y})\mid y_{i}\in f(\overline{X}\setminus X)\}.
\]
Indeed, $\pi^{-1}([f])$ is identical to the graph $\Gamma_{f}\subset\overline{X}\times\overline{Y}$
and we have
\[
\pi^{-1}([f])\cap A\cap B_{i}=\{(x,y_{i})\in\overline{X}\times\overline{Y}\mid x\in\overline{X}\setminus X\}.
\]
The last set is non-empty if and only if $y_{i}\in f(\overline{X}\setminus X)$.
This shows the above claim. 

Thus $\bigcap_{i=1}^{n}\pi(A\cap B_{i})$ is exactly the locus of
isomorphisms $f\colon\overline{X}\to\overline{Y}$ with $\overline{Y}\setminus Y\subset f(\overline{X}\setminus X)$.
We compute this closed subset $\bigcap_{i=1}^{n}\pi(A\cap B_{i})$
and check whether this is empty. The given quasi-projective schemes
$X$ and $Y$ are isomorphic if and only if this is not empty. 
\end{proof}

\section{Varieties with a big canonical sheaf or a big anti-canonical sheaf\label{sec: genetal type Fano}}

In this section, we show the decidability of the isomorphism problem
for varieties as in the title, generalizing the case of general type
solved by Totaro (see \cite[Rem.\ 12.3]{poonen2014undecidable}).
The key ingredients are computation of Iso schemes and the Kodaira
vanishing theorem.

\begin{thm}
\label{thm:iso prob big}Let $X$ and $Y$ be smooth irreducible projective
varieties. Suppose that either $\omega_{X}$ or $\omega_{X}^{-1}$
is big. Then we can algorithmically decide whether $X$ and $Y$ are
isomorphic. 
\end{thm}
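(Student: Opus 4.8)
The plan is to follow the strategy of Section~\ref{sec:Hom and Iso schemes}: from $X$ and $Y$ produce a finite, computable list of Hilbert polynomials $P_{1},\dots,P_{l}$ such that $X\cong Y$ if and only if $\ulIso_{P_{i}}(X,Y)\neq\emptyset$ for some $i$, and then compute these Iso schemes. Write $\cL=\cO_{X}(1)$, $\cM=\cO_{Y}(1)$, $H=c_{1}(\cL)$, $n=\dim X$, and suppose an isomorphism $f\colon X\to Y$ exists. Two facts come for free. First, $f^{*}\omega_{Y}\cong\omega_{X}$, so $\omega_{X}$ big forces $\omega_{Y}$ big (and similarly for the anti-canonical sheaves). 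Second, setting $D:=c_{1}(f^{*}\cM)$, all the numbers $D^{i}\cdot\omega_{X}^{n-i}=\cM^{i}\cdot\omega_{Y}^{n-i}$ are computable \emph{from the $Y$-data alone} (intersection numbers on $Y$ being computable by Section~\ref{sec:Computing-intersection-numbers}), since $f$ preserves intersection products. The Hilbert polynomial of $f$ (Definition~\ref{def: Hilb poly of an iso}) is the Hilbert polynomial of $X$ with respect to $\cL\otimes f^{*}\cM$; by Hirzebruch--Riemann--Roch it is determined by the numerical class of $\cL\otimes f^{*}\cM$, hence by the intersection numbers $H^{j}\cdot D^{n-j}$ together with the (computable) Todd classes of $X$. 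So it suffices to confine $D$ to finitely many possibilities.

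This is where bigness and Kodaira vanishing enter. Assume first $\omega_{X}$ big. Since $\cL$ is ample, for large $b$ the sheaf $\cL^{\otimes b}\otimes\omega_{X}^{-1}=\omega_{X}\otimes(\omega_{X}^{-2}\otimes\cL^{\otimes b})$ is an ample twist of $\omega_{X}$, so by Kodaira vanishing $h^{0}=\chi$ and effectivity is certified by a terminating Euler-characteristic computation; this yields $b$ with $bH-\omega_{X}$ effective. On $Y$, because $\omega_{Y}$ is big, a search over $c$ computing $h^{0}(Y,\omega_{Y}^{\otimes c}\otimes\cM^{-1})$ terminates and gives $c\omega_{Y}-\cM$ effective; pulling back by $f$ gives $c\omega_{X}-D$ effective, whence $bcH-D=c(bH-\omega_{X})+(c\omega_{X}-D)$ is effective. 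Since $D$ and $H$ are nef and $N H-D$ is effective with $N:=bc$, for every $0\le j\le n$
\[
0\le H^{j}\cdot D^{n-j}\le N^{\,n-j}H^{n},
\]
so the degree $(H+D)^{n}$ of the graph $\Gamma_{f}\subset\PP^{mn-1}$ is bounded by a computable number. As there are only finitely many Hilbert polynomials of subschemes of a fixed projective space with bounded dimension and degree, and these can be enumerated, we obtain the finite list $P_{1},\dots,P_{l}$; computing $\ulIso_{P_{i}}(X,Y)$ then decides isomorphism (this is correct even when $X\not\cong Y$, since then no isomorphism exists and every $\ulIso_{P_{i}}$ is empty). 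The case $\omega_{X}^{-1}$ big is identical after replacing $\omega$ by $\omega^{-1}$: one uses $b'H+\omega_{X}$ and $c\omega_{Y}^{-1}-\cM$ effective to get $D\preceq N H$. Here $\Aut(X)$ may be infinite (e.g.\ $\PP^{n}$), but this is harmless: all isomorphisms share the \emph{same} class $D$, so finitely many $P_{i}$ still suffice and the only effect is that some $\ulIso_{P_{i}}(X,Y)$ becomes positive-dimensional.

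The main obstacle is turning the bounding step into a genuine decision procedure that halts on \emph{every} input, not merely when $X\cong Y$. The delicate point is the search for $c$ with $c\omega_{Y}-\cM$ (resp.\ $c\omega_{Y}^{-1}-\cM$) effective: it terminates precisely when $\omega_{Y}$ (resp.\ $\omega_{Y}^{-1}$) is big, which is automatic if $X\cong Y$ but whose failure we cannot detect in general, bigness not being known to be decidable. I would resolve this by dovetailing three terminating processes: the semi-decision procedure of Section~\ref{sec:Semidecidability Iso Prob} (which halts with \textbf{yes} exactly when $X\cong Y$); the bounding computation above (which halts and certifies \textbf{no} whenever the relevant positivity holds on $Y$ yet no isomorphism is found); and terminating checks of necessary numerical invariants of an isomorphism---equality of the Hilbert polynomials $\chi(X,\omega_{X}^{\otimes m})$ and $\chi(Y,\omega_{Y}^{\otimes m})$ and of the Chern numbers---which certify \textbf{no} as soon as they disagree. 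Verifying that these ingredients jointly cover the residual case, where $Y$ fails the positivity of $X$ but matches every tested numerical invariant, is the technical heart of the argument, and is exactly where I expect the real work to lie.
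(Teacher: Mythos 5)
Your overall architecture --- bound the Hilbert polynomial of any isomorphism $f\colon X\to Y$ using $f^{*}\omega_{Y}\cong\omega_{X}$ together with effectivity searches, then compute finitely many Iso schemes --- is exactly the paper's, but two steps are genuinely broken. First, the bridge from the degree bound to a finite list of polynomials fails: your claim that there are only finitely many Hilbert polynomials of subschemes of a fixed projective space with bounded dimension and degree is false. The union of a line and $k$ isolated points in $\PP^{3}$ has dimension $1$ and degree $1$ but Hilbert polynomial $t+1+k$, so infinitely many polynomials occur. Finiteness does hold for the subschemes you actually need (graphs of isomorphisms are smooth and integral), but enumerating that finite list effectively requires a boundedness input (e.g.\ a regularity bound for smooth subvarieties in terms of dimension and degree) which you neither state nor cite; your Hirzebruch--Riemann--Roch remark does not substitute for it, since the Hilbert polynomial involves the mixed products $H^{a}\cdot D^{b}\cdot\mathrm{td}_{c}(X)$ and you only bound $H^{j}\cdot D^{n-j}$. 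The paper avoids all of this: it first replaces $\cL$ by a power so that $\cL\otimes\omega_{X}^{-1}$ is ample (ampleness being decidable, Proposition \ref{prop: ample}); Kodaira vanishing then gives $P_{f}(l)=h^{0}(X,(\cL\otimes f^{*}\cM)^{\otimes l})$ for all $l\ge1$, the effectivity of $\cB_{X}^{\otimes q}\otimes\cL^{-1}$ and $\cB_{Y}^{\otimes q}\otimes\cM^{-1}$ gives an injection $(\cL\otimes f^{*}\cM)^{\otimes l}\hookrightarrow\cB_{X}^{\otimes2ql}$, hence $0\le P_{f}(l)\le h^{0}(X,\cB_{X}^{\otimes2ql})$, and Lagrange interpolation of all integer tuples within these bounds at $l=1,\dots,\dim X+1$ produces the finite list outright. (Your parenthetical claim that all isomorphisms share the same class $D$ is also false --- on $\PP^{1}\times\PP^{1}$ the swap moves $c_{1}$ of $\cO(1,2)$ --- though your argument does not actually need it.)

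Second, the residual case you flag at the end is a real gap, and your three-process dovetail cannot close it: if $\omega_{X}$ is big, $X\not\cong Y$, and neither $\omega_{Y}$ nor $\omega_{Y}^{-1}$ is big, then the isomorphism semi-decision never halts, your $c$-search never halts, and the invariant checks halt only if some tested invariant differs --- which need not happen, since Chern numbers and the polynomials $\chi(\omega^{\otimes m})$ are far too weak to detect bigness (fake projective planes and $\PP^{2}$ already share all Chern numbers). The paper never meets this case because its proof tacitly reads the hypothesis as applying to \emph{both} inputs, i.e.\ the isomorphism problem for the class of smooth varieties with big $\omega$ or $\omega^{-1}$, as in the Introduction. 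Under that reading, for each of $X$ and $Y$ one of the two bigness semi-decision procedures (birationality of $\Phi_{\omega^{\otimes n}}$, $n\in\ZZ$) is guaranteed to terminate, so the type of each input can be determined; on a mismatch one outputs ``not isomorphic'' (at most one of $\omega_{Y}^{\pm1}$ can be big, since a sheaf and its inverse cannot both be effective on an integral positive-dimensional variety), and on a match both searches for $q$ terminate and the main algorithm runs. So the fix is not a cleverer dovetail but the reduction you skipped: determine the bigness type of $Y$ as well, which the hypothesis read on both inputs turns into a terminating computation. With the hypothesis imposed literally only on $X$, the residual case is unresolved in your proposal and, for that matter, by the paper's own proof, whose reduction ``otherwise $X$ and $Y$ are not isomorphic'' is not algorithmic in that generality.
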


\begin{proof}
It is enough to consider the case where $X$ and $Y$ have the same
dimension $d>0$. We may also suppose that either both $\omega_{X}$
and $\omega_{Y}$ are big or both $\omega_{X}^{-1}$ and $\omega_{Y}^{-1}$
are big. For, otherwise, $X$ and $Y$ are not isomorphic. We denote
these big sheaves by $\cB_{X}$ and $\cB_{Y}$ respectively. Note
that we can algorithmically decide which of $\omega_{X}$ and $\omega_{X}^{-1}$
is big by checking the birationality of maps $\Phi_{\omega_{X}^{\otimes n}}$,
$n\in\ZZ$ in turn (see Section \ref{subsec:Nefness-and-bigness}).
Let $\cL$ and $\cM$ be the very ample invertible sheaves on $X$
and $Y$ respectively corresponding to the given embeddings into projective
spaces. We compute the least positive integer $e$ such that $\cL^{\otimes e}\otimes\omega_{X}^{-1}$
is ample (see Section \ref{subsec:Ampleness}). We replace $\cL$
with $\cL^{\otimes e}$, which amounts to replacing the embedding
$X\hookrightarrow\PP^{m-1}$ with the one obtained by the $e$-uple
Veronese embedding $\PP^{m-1}\hookrightarrow\PP^{\binom{m-1+e}{m-1}-1}$.
Now $\cL\otimes\omega_{X}^{-1}$ is ample. In this situation, we will
algorithmically output finitely many polynomials $Q_{1},\dots,Q_{c}$
such that the Hilbert polynomial of every isomorphism $f\colon X\to Y$
(if any) is one of them. To do so, we first note that for every positive
integer $l$, $(\cL\otimes f^{*}\cM)^{\otimes l}\otimes\omega_{X}^{-1}$
is ample. From the Kodaira vanishing, we have 

\[
\H^{i}(X,(\cL\otimes f^{*}\cM)^{\otimes l})=0\quad(i>0).
\]
Hence the Hilbert polynomial $P_{f}$ of $f$ satisfies
\[
P_{f}(l)=\chi(X,(\cL\otimes f^{*}\cM)^{\otimes l})=h^{0}(X,(\cL\otimes f^{*}\cM)^{\otimes l}).
\]
Here $h^{0}$ means the dimension of $\H^{0}$. Then we compute the
least positive integer $q$ such that $\cB_{X}^{\otimes q}\otimes\cL^{-1}$
are $\cB_{Y}^{\otimes q}\otimes\cM^{-1}$ are both effective. Then,
there exists an injection
\[
0\to(\cL\otimes f^{*}\cM)^{\otimes l}\to\cB_{X}^{\otimes2ql},
\]
which implies
\[
P_{f}(l)=h^{0}(X,(\cL\otimes f^{*}\cM)^{\otimes l})\le h^{0}(X,\cB_{X}^{\otimes2ql}).
\]
Note that the obtained upper bound $h^{0}(X,\cB_{X}^{\otimes2ql})$
of $P_{f}(l)$ is independent of the isomorphism $f\colon Y\to X$.
In general, a polynomial 
\[
h(t)=a_{d}t^{d}+\cdots+a_{0}
\]
of degree $d$ is determined by its values at $d+1$ distinct points
$t_{0},\dots,t_{d}$, by Lagrange interpolation. We compute $h^{0}(X,\cB_{X}^{\otimes2ql})$
for $1\le l\le d+1$. For each tuple $\lambda=(\lambda_{1},\dots,\lambda_{d+1})$
of nonnegative integers with $\lambda_{i}\le h^{0}(X,\cB_{X}^{\otimes2qi})$,
we compute the polynomial $Q_{\lambda}(t)$ such that $Q_{\lambda}(i)=\lambda_{i}$.
Thus obtained finitely many polynomials $Q_{\lambda}$ are the desired
ones. For each $\lambda$, we compute $\ulIso_{Q_{\lambda}}(X,Y)$
and check whether it is empty or not. If one of them is not empty,
then $X$ and $Y$ are isomorphic. Otherwise, they are not isomorphic. 
\end{proof}

\section{\label{sec:Computing-intersection-numbers}Computing intersection
numbers}

The aim of this section is to prove the following proposition. 
\begin{prop}[{\cite[Lem.\ 8.7]{poonen2015computing}}]
\label{prop: intersection}For a smooth irreducible projective variety
$X$, an irreducible closed subset $Z\subset X$ of codimension $c$
and an invertible sheaf $\cL$ on $X$, we can algorithmically compute
the intersection number $Z\cdot\cL^{\dim Z}\in\ZZ$. 
\end{prop}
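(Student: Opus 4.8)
The plan is to compute the intersection number $Z\cdot\cL^{\dim Z}$ by reducing it to Euler characteristics, which we already know how to compute via pushforwards (Remark \ref{rem:push}). The starting point is the standard fact from intersection theory that for an invertible sheaf $\cL$ on a projective scheme and a coherent sheaf $\cF$ whose support has dimension $n$, the function $l\mapsto\chi(\cF\otimes\cL^{\otimes l})$ agrees for all $l$ with a numerical polynomial in $l$ of degree at most $n$, and the coefficient of $l^{n}/n!$ equals the intersection number of $\cL$ with $\cF$ on its support. Applying this with $\cF=\cO_{Z}$, the structure sheaf of $Z$ (with its reduced structure) pushed forward to $X$, and $n=\dim Z=\dim X-c$, the leading coefficient of $\chi(\cO_{Z}\otimes\cL^{\otimes l})$, multiplied by $(\dim Z)!$, is exactly $Z\cdot\cL^{\dim Z}$.

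First I would make the embedding data explicit. The variety $X\subset\PP^{r-1}$ is given by a homogeneous ideal, and $Z\subset X$ is an irreducible closed subset, so its saturated ideal can be computed; taking the radical if necessary gives $\cO_{Z}$ with reduced structure. The invertible sheaf $\cL$ is represented by a finitely generated graded module over the homogeneous coordinate ring, so for each fixed integer $l$ the sheaf $\cO_{Z}\otimes\cL^{\otimes l}$ is again represented by an explicitly computable module. Next, for a range of values $l=0,1,\dots,N$ with $N$ at least $\dim Z$, I would compute each Euler characteristic $\chi(\cO_{Z}\otimes\cL^{\otimes l})=\sum_{i}(-1)^{i}\dim_{k}\H^{i}(Z,\cO_{Z}\otimes\cL^{\otimes l})$ using the cohomology algorithms cited in Remark \ref{rem:push} (applied to the structure morphism of $Z$, or of $X$). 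Having $\dim Z+1$ values of a polynomial of degree $\dim Z$, Lagrange interpolation recovers the polynomial, and reading off $(\dim Z)!$ times its leading coefficient yields the intersection number.

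The main obstacle is justifying that this leading coefficient really computes $Z\cdot\cL^{\dim Z}$ as a well-defined integer, and in particular handling the subtlety that $\cL$ need not be ample or even nef. The clean way to do this is to reduce to the ample case: since $X$ is given with a very ample sheaf $\cO_X(1)$, I can write $\cL\cong\cO_X(a)\otimes\cO_X(b)^{-1}$ as a difference of two very ample sheaves (choosing $b$ so that $\cL\otimes\cO_X(b)$ is still very ample, which is possible and checkable by the ampleness criterion discussed in Section \ref{subsec:Ampleness}), and then expand $Z\cdot\cL^{\dim Z}$ multilinearly into a signed sum of intersection numbers of very ample sheaves. For a very ample (hence ample) sheaf $\cH$, the quantity $Z\cdot\cH^{\dim Z}$ is the familiar degree of $Z$ in the projective embedding given by $\cH$, computed as $(\dim Z)!$ times the leading coefficient of the genuine Hilbert polynomial $l\mapsto\chi(\cO_{Z}\otimes\cH^{\otimes l})$, where Serre vanishing guarantees $\chi$ equals $h^{0}$ for large $l$ and the polynomial has the expected degree. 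The asymptotic Riemann-Roch statement then guarantees that the multilinear expansion of these separately-computed leading coefficients agrees with the leading coefficient of $\chi(\cO_{Z}\otimes\cL^{\otimes l})$ itself, so the naive interpolation in the preceding paragraph is valid even for non-ample $\cL$; this is the point requiring the most care, since it is where numerical intersection theory, rather than a direct geometric count, does the work.
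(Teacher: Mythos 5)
Your proposal is correct, but it takes a genuinely different route from the paper. You work entirely with coherent-sheaf Euler characteristics: by Snapper's theorem (asymptotic Riemann--Roch), the function $l\mapsto\chi(\cO_{Z}\otimes\cL^{\otimes l})$ is a numerical polynomial of degree at most $\dim Z$ whose leading coefficient equals $(Z\cdot\cL^{\dim Z})/(\dim Z)!$, and this holds for an \emph{arbitrary} invertible sheaf $\cL$, with no positivity hypothesis; since each value $\chi(\cO_{Z}\otimes\cL^{\otimes l})$ is computable (Remark \ref{rem:push}, or as the value at $0$ of the Hilbert polynomial of the representing graded module), Lagrange interpolation at $\dim Z+1$ points finishes the job. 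The paper instead argues topologically: it invokes Simpson's algorithm to compute a finite simplicial complex homotopy equivalent to $X(\CC)$, computes the cycle class $[Z]\in\H^{2c}(X(\CC),\ZZ)$ via an algorithmic resolution of singularities of $Z$, pushforward on singular homology, and Poincar\'e duality, represents $[\cL]$ by a divisor, and evaluates the cup product $[Z][\cL]^{\dim Z}$ against the point class. Your method is more elementary --- no resolution of singularities, no homotopy-theoretic machinery --- and it does not even use smoothness of $X$, since Snapper's theorem holds on any projective scheme; the paper's method, on the other hand, produces the actual cohomology classes $[Z],[\cL]$, which are reused elsewhere (Sections \ref{subsec:Nefness-and-bigness} and \ref{subsec:Approximating cones}), and it extends to intersections of two cycles of complementary dimension, which cannot be written as an Euler characteristic of twists by a single line bundle. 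One presentational remark: your final paragraph's reduction to very ample sheaves produces \emph{mixed} intersection numbers $Z\cdot\cA^{i}\cdot\cB^{\dim Z-i}$ whose computation you never spell out; it is cleaner to cite Snapper's theorem for arbitrary invertible sheaves directly, which makes your first two paragraphs a complete proof and renders the third unnecessary.
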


This will be used in Section \ref{sec:Positivity} to discuss decidability
of various positivity properties of invertible sheaves. The proof
in \cite{poonen2015computing} uses étale cohomology (in fact, its
authors considered, more generally, the intersection number of cycles
of complementary dimensions). We give an alternative proof using Simpson's
algorithm \cite[Section 2.5]{simpson2008algebraic} to compute singular
cohomology. We only consider the case where the ambient variety $X$
is smooth, as Simpson's algorithm is valid only for smooth varieties.
According to his algorithm, for a smooth projective variety $X\subset\PP^{r-1}$,
we can compute a finite simplicial complex $\cH(X)$ in $\PP^{r-1}(\CC)$
which is homotopy equivalent to $X(\CC)$. In particular, we can compute
the singular homology $\H_{i}(X(\CC),\ZZ)$ and cohomology $\H^{i}(X(\CC),\ZZ)$
using $\cH(X)$. Their elements are represented by simplicial $i$-chains
and $i$-cochains on $\cH(X)$ respectively. 

When we have a morphism $f\colon Z\to X$ of smooth projective varieties,
denoting its graph by $\Gamma_{f}$, we can compute maps of simplicial
complexes
\[
\cH(Z)\leftarrow\cH(\Gamma_{f})\to\cH(X)
\]
and the induced map
\[
\H_{i}(Z(\CC),\ZZ)\cong\H_{i}(\Gamma_{f}(\CC),\ZZ)\xrightarrow{f_{*}}\H_{i}(X(\CC),\ZZ).
\]
If $X$ and $Z$ are irreducible and have dimensions $d$ and $p$
respectively and if $f$ is generically finite onto the image, then
the cycle class $[f]\in\H^{2d-2p}(X(\CC),\ZZ)$ of $f$ is computed
to be the element corresponding to $f_{*}([Z])\in\H_{2p}(X(\CC),\ZZ)$
via the Poincaré duality
\[
\H^{2d-2p}(X(\CC),\ZZ)\cong\H_{2p}(X(\CC),\ZZ).
\]
Here $[Z]\in\H_{2p}(Z(\CC),\ZZ)$ denotes the fundamental class of
$Z$. When $Z\subset X$ is a (possibly singular) irreducible closed
subvariety of dimension $p$, then we can algorithmically construct
a resolution of singularities $f\colon\widetilde{Z}\to Z\subset X$
(see \cite{villamayor1989constructiveness,villamayoru.1992patching,bierstone1991asimple,bierstone1997canonical,bodnar2000acomputer})
and define the cycle class $[Z]\in\H^{2d-2p}(X(\CC),\ZZ)$ to be $[f]$. 

For an invertible sheaf $\cL$ on $X$, we can compute a divisor $D$
on $X$ such that $\cL\cong\cO_{X}(D)$, for example, by an algorithm
given in \cite[Section 3]{schwede2018divisor}. If we write $D=\sum_{i=1}^{n}a_{i}D_{i}$
with $D_{i}$ prime divisors and $a_{i}$ integers, then the cohomology
class $[\cL]$ of $\cL$ is defined to be $[D]=\sum_{i=1}^{n}a_{i}[D_{i}]$. 

We can also compute the cup product 
\[
\H^{i}(X(\CC),\ZZ)\times\H^{j}(X(\CC),\ZZ)\to\H^{i+j}(X(\CC),\ZZ)
\]
again by using the representation of $\H^{i}(X(\CC),\ZZ)$ in terms
of the simplicial complex $\cH(X)$. In summary, in the situation
of Proposition \ref{prop: intersection}, we can algorithmically compute
elements $[Z],[\cL]\in\H^{2d-2c}(X(\CC),\ZZ)$ as represented by explicit
cochains on $\cH(X)$ and compute the product $[Z][\cL]^{\dim Z}\in\H^{2d}(X(\CC),\ZZ)$
with respect to the cup product. The desired intersection number $Z\cdot\cL^{\dim Z}\in\ZZ$
is then computed as the integer $n$ such that $[Z][\cL]^{\dim Z}=n[\pt]$,
where $[\pt]$ is the cycle class of a point of $X(\CC)$. This completes
the proof of Proposition \ref{prop: intersection}.

\section{\label{sec:Positivity}Positivity of invertible sheaves}

Positivity properties of invertible sheaves, such as ample, big, and
nef, are closely related to the isomorphism problem. In this section,
we discuss the decidability problem of these properties. We also show
that, for a smooth variety whose Picard number can be computable,
we can approximate its nef cone and pseudo-effective cone with arbitrary
precision. 

\subsection{Global generation}
\begin{prop}
\label{prop: global gen}For a projective variety $X$ and a coherent
sheaf $\cL$ on it, we can algorithmically check whether it is globally
generated. (We do not assume that $\cL$ is invertible, although it
is the case of our main interest.)
\end{prop}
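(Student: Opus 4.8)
The plan is to reduce global generation of $\cL$ to a concrete finite-dimensional linear-algebra computation on each standard affine chart of $X$. Recall from the preliminaries that $X\subset\PP^{r-1}$ is given by defining polynomials, that $\cL$ is represented by a finitely generated graded module $M$ over the homogeneous coordinate ring $R=R_X$ via an explicit free presentation, and that we can compute pushforwards and the global sections $\H^0(X,\cL)$ (Remark \ref{rem:push}, applied to the structure morphism $X\to\Spec k$). A coherent sheaf is globally generated precisely when the evaluation morphism
\[
\H^0(X,\cL)\otimes_k\cO_X\to\cL
\]
is surjective. Surjectivity of a morphism of coherent sheaves is a local, and in fact closed, condition on the affine charts $X_i=X\cap\{x_i\ne0\}$; so the first step is to compute, on each chart, the finitely many global sections as elements of the $R_i$-module $M_i:=M\otimes_R R_i$ (the restriction of $\cL$ to $X_i$), where $R_i$ is the coordinate ring of $X_i$.

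Next I would translate surjectivity of the evaluation map into a cokernel computation. Concretely, if $s_1,\dots,s_N$ is a basis of $\H^0(X,\cL)$, then on the chart $X_i$ each $s_j$ restricts to an element of the $R_i$-module $M_i$, and $\cL$ is generated by global sections over $X_i$ if and only if the $R_i$-linear map $R_i^{N}\to M_i$ sending the standard basis to the restricted sections is surjective, i.e.\ has zero cokernel. The cokernel is a finitely presented $R_i$-module which we can compute from the presentation matrix of $M_i$ together with the column vectors representing the $s_j$; vanishing of the cokernel is decidable via a Gröbner basis computation (ideal/module membership), exactly as discussed in Section \ref{sec:Preliminaries}. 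Repeating this over the finite affine cover $X=\bigcup_{i=1}^r X_i$ and declaring $\cL$ globally generated if and only if the cokernel vanishes on every chart gives the algorithm.

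The main subtlety, which I would treat carefully, is the passage from the graded module $M$ over $R$ to the sheaf of global sections and the identification of $\H^0(X,\cL)$ with a computable $k$-vector space together with its image in each $M_i$. The module $M$ represents $\cL$ only up to a shift of grading and up to saturation, so $\H^0(X,\cL)=\bigoplus$ of an appropriate graded piece is not literally a graded component of $M$; one must compute $\H^0$ honestly as the degree-zero pushforward, for which the cited algorithms in Remark \ref{rem:push} (following \cite{smith1998computing,eisenbud2008relative}) apply, and then track how a chosen basis of $\H^0$ maps into the localizations $M_i$. I expect this bookkeeping—correctly realizing abstract global sections as explicit elements of each $M_i$ so that the cokernel computation is meaningful—to be the one place demanding attention, whereas the surjectivity test itself is a routine Gröbner-basis cokernel-vanishing check. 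Note finally that we never use invertibility of $\cL$: the argument applies verbatim to an arbitrary coherent sheaf, which is the generality claimed in the statement.
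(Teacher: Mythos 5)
Your proposal is correct, and its core strategy coincides with the paper's: reduce global generation to surjectivity of the evaluation map $\H^0(X,\cL)\otimes_k\cO_X\to\cL$, compute the global sections honestly (the saturation issue you flag is genuine, since the given graded module need not be saturated), and test surjectivity by a cokernel computation. The difference is in execution. The paper never leaves the graded category: it computes the full module of twisted global sections $L'=\bigoplus_{v\ge0}\H^{0}(X,\widetilde{L}(v))$ as $\Hom_{R}(R_{\ge r},L)_{\ge0}$ for $r\gg0$, takes a \emph{minimal} free presentation $F_{1}\to F_{0}\to L'\to0$, and observes that the degree-zero generators of $F_{0}$ form a basis of $\H^{0}(X,\widetilde{L})$; the induced graded map $F_{0}'\to L'$ from the degree-zero part of the presentation \emph{is} the evaluation map, and global generation is equivalent to the sheaf associated to $\Coker(F_{0}'\to L')$ having empty support---a single global computation. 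Your version dehomogenizes and works chart by chart, which is equivalent but forces exactly the bookkeeping you identify as the main subtlety: realizing abstract sections as explicit elements of each chart module. That bookkeeping is indeed algorithmic---a section $\sigma\in\Hom_{R}(R_{\ge r},M)_{0}$ restricts over $X_i$ to $\sigma(x_i^{r})/x_i^{r}$, though note the minor imprecision that the restriction of $\cL$ to $X_i$ is the sheaf associated to the degree-zero part of the localization $M_{x_i}$, not to $M\otimes_R R_i$ as written (the chart ring $R_i$ is not an $R$-algebra in the graded sense). What the paper's minimal-presentation device buys is that this bookkeeping disappears entirely: the degree-zero generators come already equipped with their map into $L'$, so no chart-by-chart restriction of sections is ever needed, and the surjectivity test collapses to computing the support of one graded cokernel.
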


\begin{proof}
Let $R$ be the homogeneous coordinate ring of $X$ and let $L$ be
the given finitely generated graded $R$-module, which defines $\cL$.
We can compute the graded $R$-module $L':=\bigoplus_{v\ge0}\H^{0}(X,\widetilde{L}(v))$
as the Hom module $\Hom_{R}(R_{\ge r},L)_{\ge0}$ for some sufficiently
large $r$, see Theorem 8.2 of Chapter 8 by Eisenbud in the book \cite{vasconcelos1998computational}.
Let 
\[
F_{1}\to F_{0}\to L'\to0
\]
be the obtained minimal free presentation of $L'$. Here the arrows
are degree-preserving $R$-linear maps and the free module $F_{0}$
is written as 
\[
F_{0}=\bigoplus_{i=1}^{c}Ry_{i}
\]
with homogeneous generators $y_{i}$ with 
\[
0\le\deg(y_{1})\le\cdots\le\deg(y_{c}).
\]
Let $y_{1},\dots,y_{n}$ ($n\le c$) be the ones of degree $0$, which
are regarded as a basis of $\H^{0}(X,\widetilde{L})$, and let 
\[
F_{0}':=\bigoplus_{i=1}^{n}Ry_{i}.
\]
The derived map $F_{0}'\to L'$ induces the map of sheaves,
\[
\cO_{X}\otimes\H^{0}(X,\widetilde{L})\to\widetilde{L}.
\]
The sheaf $\widetilde{L}$ is globally generated if and only if this
map is surjective. We can check the latter condition, for example,
by computing the support of the $\cO_{X}$-module corresponding to
the graded $R$-module $\Coker(F_{0}'\to L')$ and see whether it
is empty.
\end{proof}

\subsection{Very ampleness\label{subsec:Very-ampleness}}
\begin{prop}
For a projective variety $X$ and an invertible sheaf $\cL$ on it,
we can algorithmically check whether it is very ample. 
\end{prop}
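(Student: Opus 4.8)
The plan is to reduce very ampleness to the two conditions that $\cL$ be globally generated and that the morphism $\phi_{|\cL|}\colon X\to\PP^N$ defined by the complete linear system $|\cL|$ be a closed immersion. This reduction is justified as follows. If $\cL$ is very ample, then by definition some linear subsystem $V\subseteq\H^0(X,\cL)$ yields a (closed, since $X$ is proper) immersion $\phi_V\colon X\hookrightarrow\PP(V^{*})$; because $\phi_V$ factors through $\phi_{|\cL|}$ via a linear projection $\PP(\H^0(X,\cL)^{*})\dashrightarrow\PP(V^{*})$, the morphism $\phi_{|\cL|}$ is injective on $k$-points and separates tangent vectors whenever $\phi_V$ does, hence is itself a proper, unramified, universally injective morphism, i.e.\ a closed immersion. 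Conversely, if $\phi_{|\cL|}$ is a closed immersion, then $\cL$ is very ample by definition. Thus it suffices to decide whether $\phi_{|\cL|}$ is a closed immersion, equivalently (since a closed immersion is an isomorphism onto its scheme-theoretic image) whether $X$ maps isomorphically onto the scheme-theoretic image $W:=\phi_{|\cL|}(X)$.

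First I would compute $\H^0(X,\cL)$ together with a basis $s_0,\dots,s_N$, using the description of global sections as a $\Hom$ module exactly as in the proof of Proposition \ref{prop: global gen}, and then run that same algorithm to test global generation. If $\cL$ is not globally generated, then $\phi_{|\cL|}$ is not even a morphism and $\cL$ is not very ample, so the algorithm returns ``no''. If $\cL$ is globally generated, the sections $s_0,\dots,s_N$ have no common zero and define an honest morphism $\phi\colon X\to\PP^N=\Proj k[y_0,\dots,y_N]$: on the open set $\{s_i\ne0\}$ the section $s_i$ trivialises $\cL$ and the coordinates $y_j/y_i$ pull back to the regular functions $s_j/s_i$. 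From this local data I would assemble the graph $\Gamma_{\phi}\subset X\times\PP^N$ as an explicit closed subscheme (via the Segre embedding) and compute $W\subset\PP^N$ as the $\Proj$ of the subalgebra of the section ring generated by the $s_i$: the degree-$d$ part of the homogeneous ideal $I_W$ is the kernel of the multiplication map $\Sym^d\langle s_0,\dots,s_N\rangle\to\H^0(X,\cL^{\otimes d})$, which is computable degree by degree since the modules $\cL^{\otimes d}$, their global sections, and the multiplication maps are all computable.

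With $W$ and the induced morphism $\psi\colon X\to W$ in hand (read off from $\Gamma_{\phi}$), I would decide whether $\psi$ is an isomorphism by exactly the argument of Lemma \ref{lem:iso decidable}: over each standard affine chart I compute $\Omega_{X/W}$ (Remark \ref{rem:cotangent}) to test unramifiedness, compute $\psi_{*}\cO_X$ as a module (Remark \ref{rem:push}), and then verify the three conditions on support, first Fitting ideal, and $\mathrm{Tor}$ that together characterise $\psi$ being finite, surjective, and flat of rank one, i.e.\ an isomorphism. The sheaf $\cL$ is very ample precisely when this test succeeds.

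The main obstacle is the effective computation of $W$, and more precisely obtaining an a priori degree bound up to which $I_W$ must be computed before one is certain to have captured all of its generators; this requires a Gotzmann-type regularity bound for $W\subset\PP^N$, extractable from the Hilbert polynomial of $W$, which in turn is accessible because $\psi$ is generically finite onto its image and the degrees of the $s_i$ are known. An alternative that avoids this global bound is to work entirely with the graph $\Gamma_{\phi}$: one tests directly, chart by chart on $\PP^N$, whether the projection $\Gamma_{\phi}\to\PP^N$ is unramified and an isomorphism onto its image, using the same $\Omega$/support/Fitting/$\mathrm{Tor}$ package, so that no separate computation of $I_W$ to high degree is required.
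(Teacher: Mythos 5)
Your overall strategy is the paper's: test global generation via Proposition \ref{prop: global gen}, construct the morphism $\Phi_{\cL}$ to $\PP^{N}$ given by the complete linear system, and decide very ampleness by testing whether $X$ maps isomorphically onto its image using the unramifiedness/support/Fitting/Tor machinery of Lemma \ref{lem:iso decidable}; your preliminary reduction (very ample iff the \emph{complete} system gives a closed immersion, via the linear-projection factorization) is the same reduction the paper uses implicitly.

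The one genuine gap is exactly the step you yourself flag as the main obstacle: the computation of the scheme-theoretic image $W$. Your primary method, computing $(I_{W})_{d}$ as the kernel of $\Sym^{d}\H^{0}(X,\cL)\to\H^{0}(X,\cL^{\otimes d})$ degree by degree, has no termination criterion, and your proposed patch is circular: a Gotzmann-type bound for $W$ requires knowing its Hilbert polynomial, but certifying the Hilbert polynomial of $W$ from finitely many values of its Hilbert function requires precisely the kind of regularity bound for $I_{W}$ you are trying to produce (nor can you compute the degree of the generically finite map $\psi$ in advance). The paper sidesteps all of this with elimination theory: it realizes the graph $\Gamma\subset\PP^{m-1}\times\PP^{N}$ explicitly as a bi-graded quotient via the symmetric algebra $\Sym_{R}(M)$ of the module of sections, and then the defining ideal of the image is obtained in one finite Gr\"obner-basis computation, $(I_{\Gamma}:(x_{1},\dots,x_{m})^{\infty})\cap k[y_{0},\dots,y_{N}]$ (Remark \ref{rem:proj elim}); no a priori degree bound is needed. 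Your fallback alternative (test chart by chart on $\PP^{N}$ that $\Gamma_{\phi}\to\PP^{N}$ is a closed immersion) is sound in spirit and would also avoid the bound, but it cannot use ``the same package'' verbatim: the test of Lemma \ref{lem:iso decidable} certifies isomorphism \emph{onto the target}, and its condition $\Supp(M_{j})=X_{j}$ fails whenever $W\subsetneq\PP^{N}$. You would need to drop that condition and argue separately that a finite morphism whose pushforward algebra is locally generated by one element as a module is a closed immersion (true, since local cyclicity of $f_{*}\cO$ forces surjectivity of $\cO_{\PP^{N}}\to f_{*}\cO$). With either repair, your argument goes through.
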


\begin{proof}
We first check the global generation of $\cL$. If $\cL$ is not globally
generated, then $\cL$ is not very ample. Suppose that $\cL$ is globally
generated. We then compute the morphism $\Phi_{\cL}\colon X\to\PP^{n-1}$
associated to $\cL$, where $n=\dim\H^{0}(X,\cL)$. Let $L$ be the
given graded $R$-module defining $\cL$ and we construct a map of
$R$-modules, $F_{0}'\to L'$, as in the proof of Proposition \ref{prop: global gen}.
Let $M$ be the image of this map, which defines the same sheaf on
$X$ as $L'$ and $L$ do. We have a free presentation 
\[
F_{1}'\to F_{0}'\to M\to0.
\]
From \cite[Prop.\ A2.2]{eisenbud1995commutative}, this induces the
exact sequence
\[
\Sym_{R}(F_{0}')\otimes_{R}F_{1}'\to\Sym_{R}(F_{0}')\to\Sym_{R}(M)\to0.
\]
If the basis of $F_{1}'$ maps to 
\[
g_{i}=\sum_{j=1}^{n}g_{ij}y_{j}\quad(i=1,\dots,r)
\]
with $r$ denoting the rank of $F_{1}'$, then 
\begin{align*}
\Sym_{R}(M) & =R[y_{1},\dots,y_{n}]/(g_{1},\dots,g_{r})\\
 & =k[x_{1},\dots,x_{m},y_{1},\dots,y_{n}]/(f_{1},\dots,f_{l},g_{1},\dots,g_{r}),
\end{align*}
which is bi-graded. This defines a closed subscheme $\Gamma\subset\PP^{m-1}\times\PP^{n-1}$.
The projection $\Gamma\to\PP^{m-1}$ is an isomorphism onto $X$.
In other words, $\Gamma$ is the graph of a morphism $X\to\PP^{n-1}$.
The last morphism is the morphism $\Phi_{\widetilde{L}}$ associated
to the globally generated invertible sheaf $\widetilde{L}$. Now we
can compute the image $Y:=\Phi_{\widetilde{L}}(X)$ of $\Phi_{\widetilde{L}}$
by projective elimination (see Remark \ref{rem:proj elim}). From
Lemma \ref{lem:iso decidable}, we can check whether the morphism
$X\to Y$, which corresponds to $\Gamma\subset\PP^{m-1}\times\PP^{n-1}$,
is an isomorphism. Our invertible sheaf $\cL$ is very ample if and
only if the last morphism is an isomorphism.
\end{proof}
\begin{rem}[Projective elimination]
\label{rem:proj elim}Suppose that a closed subscheme $V\subset\PP^{m-1}\times\PP^{n-1}$
is defined by a bi-homogeneous ideal $I\subset k[x_{1},\dots,x_{m},y_{1},\dots,y_{n}]$.
Then the scheme-theoretic image $p_{2}(V)$ by the second projection
is defined by the ideal
\[
(I:(x_{1},\dots,x_{m})^{\infty})\cap k[y_{1},\dots,y_{n}].
\]
At the set-theoretic level, this is written \cite[page 503]{greuel2008asingular};
the closed subset $p_{2}(V)$ is the zero set of the last homogeneous
ideal. The scheme-theoretic version follows from \cite[Lemma A.7.9]{greuel2008asingular}. 
\end{rem}

\subsection{Ampleness\label{subsec:Ampleness}}
\begin{prop}
\label{prop: ample}We can algorithmically decide whether an invertible
sheaf on a smooth irreducible projective variety is ample. 
\end{prop}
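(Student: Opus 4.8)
The plan is to decide ampleness via the Nakai--Moishezon criterion, which says that an invertible sheaf $\cL$ on a projective variety $X$ is ample if and only if $\cL^{\dim Z}\cdot Z>0$ for every irreducible closed subvariety $Z\subseteq X$ (including $Z=X$). The engine driving the algorithm is already in hand: by Proposition \ref{prop: intersection} we can compute each intersection number $\cL^{\dim Z}\cdot Z\in\ZZ$, since $X$ is smooth and irreducible. So the real content is to reduce the infinitely many subvarieties $Z$ appearing in the Nakai--Moishezon criterion to a finite, algorithmically producible list.

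The reduction I would use rests on the fact that ampleness is an open condition that can be detected numerically, rather than by literally ranging over all subvarieties. First I would compute the intersection numbers $\cL^{i}\cdot H^{\dim X-i}$ against the given very ample sheaf $H=\iota^{*}\cO(1)$, and more generally compute, for the finitely many homogeneous components coming from the defining data, the relevant intersection data of $\cL$. The key point is that $\cL$ is ample precisely when some tensor power $\cL^{\otimes N}$ is very ample; by the very ampleness test of Section \ref{subsec:Very-ampleness} we can check, for each $N=1,2,3,\dots$ in turn, whether $\cL^{\otimes N}$ is very ample and halt as soon as one is. This gives a semi-decision procedure that halts exactly when $\cL$ \emph{is} ample.

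\textbf{The hard part} is to make the procedure terminate when $\cL$ is \emph{not} ample, since the very ampleness loop above never halts in that case. Here I would run a second, complementary semi-decision procedure in parallel that halts exactly when $\cL$ is \emph{not} ample. By Nakai--Moishezon, non-ampleness means there exists an irreducible closed $Z\subseteq X$ with $\cL^{\dim Z}\cdot Z\le 0$. The schemes $Z\subseteq X$ are enumerable (as in Section \ref{sec:Semidecidability Iso Prob}, enumerate homogeneous ideals, test inclusion in $I_X$, test irreducibility and reducedness via the cited primary-decomposition and irreducible-component algorithms), so I would enumerate all irreducible closed subvarieties $Z\subseteq X$, compute $\cL^{\dim Z}\cdot Z$ by Proposition \ref{prop: intersection}, and halt as soon as one with $\cL^{\dim Z}\cdot Z\le 0$ is found. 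Running both loops in dovetailed fashion, exactly one halts, and which one halts decides ampleness. The main subtlety to get right is the enumeration of subvarieties together with the computation of $\dim Z$ and the codimension input needed by Proposition \ref{prop: intersection}; both dimension and irreducible decomposition are computable from a defining ideal by the algorithms already cited in Section \ref{sec:Computing-intersection-numbers} and Section \ref{sec:One-dimensional-schemes}, so no new tool is required.

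I would therefore phrase the proof as: \emph{ampleness is semi-decidable} (very ampleness of powers) and \emph{non-ampleness is semi-decidable} (Nakai--Moishezon violation found by enumeration and intersection computation), hence ampleness is decidable. If one prefers a single-pass algorithm, the alternative is to bound a priori the subvarieties that must be tested, but that requires effective bounds (e.g. on the twist making $\cL^{\otimes N}$ very ample, or on the degrees of potential destabilizing $Z$), which is exactly where explicit constants would have to be supplied; the dovetailed two-semidecision-procedures formulation sidesteps this and is the cleanest route to the stated decidability.
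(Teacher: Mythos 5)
Your proof is correct, but it takes a genuinely different route from the paper. The paper's proof is precisely the ``single-pass'' alternative that you mention at the end and set aside as requiring explicit constants: it invokes Siu's effective version of Matsusaka's Big Theorem \cite{siu1993aneffective}, which supplies a positive integer $m(\cL)$, explicitly computable from $\cL^{\dim X}$, $\omega_{X}\cdot\cL^{\dim X-1}$ and $\dim X$ (all available via Proposition \ref{prop: intersection}), such that $\cL$ is ample if and only if $\cL^{\otimes m(\cL)}$ is very ample; a single application of the very ampleness test of Section \ref{subsec:Very-ampleness} then decides the question. Your dovetailed pair of semi-decision procedures --- testing very ampleness of the successive powers $\cL^{\otimes N}$ to detect ampleness, and enumerating irreducible closed subvarieties $Z\subseteq X$ while computing $\cL^{\dim Z}\cdot Z$ to detect a Nakai--Moishezon violation --- is also valid: both halting criteria are correct, the needed ingredients (enumeration of irreducible subvarieties, dimension computation, and the intersection numbers of Proposition \ref{prop: intersection}) are all algorithmically available, and exactly one of the two loops halts. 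The trade-off is clear: the paper's argument performs one bounded test but leans on a deep effective theorem, whereas yours uses only the classical Nakai--Moishezon criterion and brute-force enumeration, at the price of having no a priori bound on running time. It is worth noting that the paper's introduction describes this proposition as ``using the Nakai--Moishezon criterion for ampleness and computation of intersection numbers,'' which in fact matches your route more closely than the proof the paper actually gives.
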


\begin{proof}
Let $X$ be a projective scheme and let $\cL$ be an invertible sheaf
on $X$. From an effective version of Matsusaka's Big Theorem \cite{siu1993aneffective},
there exists a positive integer $m(\cL)$ explicitly determined by
$\cL^{\dim X}$, $\omega_{X}\cdot\cL^{\dim X-1}$, and $\dim X$ such
that $\cL$ is ample if and only if $\cL^{\otimes m(\cL)}$ is very
ample. Thus, we only need to compute the number $m(\cL)$ and check
whether $\cL^{\otimes m(\cL)}$ is very ample. 
\end{proof}

\subsection{\label{subsec:Nefness-and-bigness}Nefness, bigness and pseudo-effectivity}

These properties in the title of invertible sheaves are all positivity
properties in some sense, which are weaker than ampleness, and play
important roles in birational geometry. In what follows, we restrict
ourselves to the case where the ambient scheme is irreducible and
smooth, unless otherwise noted. 
\begin{defn}
Let $X$ be a smooth irreducible projective variety and let $\cL$
be an invertible sheaf on $X$. The $\cL$ is \emph{big} if for some
integer $n>0$, the rational map associated to $\cL^{n}$,
\[
\Phi_{\cL^{n}}\colon X\dashrightarrow\PP^{m},
\]
is birational onto the image. The $\cL$ is \emph{nef }if for every
irreducible curve $C\subset X$, we have $C\cdot\cL\ge0$. The $\cL$
is \emph{pseudo-effective }if its class in $\NS(X)\otimes\RR$ is
the limit of classes of effective divisors. 
\end{defn}

Note that it is easy to check whether $\cL$ is effective (that is,
isomorphic to $\cO_{X}(D)$ for some effective divisor $D$) by computing
the cohomology group $\H^{0}(X,\cL)$. For each $n$, we can algorithmically
check whether the map $\Phi_{\cL^{n}}$ is birational, see \cite{simis2004cremona,doria2012acharacteristicfree}.
See also \cite{bott2019rationalmaps} for implementation of such an
algorithm. Thus, bigness of an invertible sheaf is semi-decidable.
On the other hand, \emph{not} being nef is a semi-decidable property.
Indeed, we enumerate irreducible curves in $X$ as $C_{1},C_{2},\dots$
and for each $i$, we compute the intersection number $C_{i}\cdot\cL$,
until we get a negative intersection number. As for pseudo-effectivity,
we have the following theorem \cite[0.2 Theorem]{boucksom2013thepseudoeffective}:
$\cL$ is pseudo-effective if and only if $\cL\cdot C\ge0$ for every
irreducible curve $C\subset X$ which moves in a family covering $X$.
Using this, we can prove:
\begin{prop}
\label{prop: not pseudo-eff}Not being pseudo-effective is a semi-decidable
property. 
\end{prop}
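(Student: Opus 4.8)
The plan is to use the characterization of pseudo-effectivity from Boucksom--Demailly--Păun--Peternell quoted just above: an invertible sheaf $\cL$ fails to be pseudo-effective precisely when there exists an irreducible curve $C\subset X$ moving in a family that covers $X$ with $\cL\cdot C<0$. So the strategy mirrors the semi-decidability of \emph{not} being nef, except that we must restrict attention to \emph{covering families} of curves rather than arbitrary irreducible curves, and we must detect such a family algorithmically.

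First I would enumerate all Hilbert polynomials $Q$ and, for each, compute the Hilbert scheme $\Hilb_{Q}(X)$ together with its universal family $\cW\subset \Hilb_{Q}(X)\times X$ using Proposition \ref{prop:univ fam}. For each $Q$ I would single out the locus of those points $[C]\in\Hilb_{Q}(X)$ whose corresponding subscheme $C\subset X$ is a one-dimensional irreducible reduced curve; since we can algorithmically test reducedness, irreducibility, and dimension of the fibers of the universal family, this locus is a computable locally closed subset $H_{Q}\subset\Hilb_{Q}(X)$. Restricting the universal family to $H_{Q}$ gives a family $\cW_{Q}\to H_{Q}$ of irreducible curves. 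The family covers $X$ if and only if the second projection $\cW_{Q}\to X$ is dominant, equivalently surjective (since both are projective), which we can check by computing the scheme-theoretic image via projective elimination (Remark \ref{rem:proj elim}) and testing whether it equals $X$.

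Next, for each $Q$ such that some irreducible component of $H_{Q}$ yields a covering family, I would pick a general member $C$ of that family and compute the intersection number $\cL\cdot C\in\ZZ$ using Proposition \ref{prop: intersection}; note that $\cL\cdot C$ is constant along a connected family, so any single general member suffices. The algorithm runs through the Hilbert polynomials $Q=Q_{1},Q_{2},\dots$ one after another, and halts and reports ``not pseudo-effective'' as soon as it finds a covering family of irreducible curves with $\cL\cdot C<0$. By the quoted theorem of Boucksom--Demailly--Păun--Peternell, such a family exists if and only if $\cL$ is not pseudo-effective, so this procedure terminates precisely on the non-pseudo-effective inputs; hence not being pseudo-effective is semi-decidable.

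The main obstacle I anticipate is cleanly extracting, and certifying algorithmically, the locus $H_{Q}$ of irreducible reduced curves inside $\Hilb_{Q}(X)$ and verifying the covering (dominance) condition for the associated family. One must be careful that a single Hilbert polynomial's Hilbert scheme can have several components with different geometric behavior, so the irreducibility and covering tests should be carried out component by component; decomposing $H_{Q}$ into irreducible components is itself algorithmic by the cited results on computing irreducible components. A secondary subtlety is that the theorem requires curves that genuinely \emph{move} to cover $X$, not merely curves whose union happens to be dense; but a covering algebraic family in the sense above is exactly what the theorem needs, so taking the projection $\cW_{Q}\to X$ to be dominant is the correct and sufficient condition. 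Everything else---enumeration of Hilbert polynomials, computation of Hilbert schemes and universal families, and computation of intersection numbers---is supplied by the earlier sections, so the proof reduces to assembling these computable ingredients in the order just described.
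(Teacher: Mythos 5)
Your overall strategy is the same as the paper's: combine the criterion of \cite[0.2 Theorem]{boucksom2013thepseudoeffective} with computability of Hilbert schemes (Proposition \ref{prop:univ fam}) and of intersection numbers (Proposition \ref{prop: intersection}), and search for a covering family of irreducible curves meeting $\cL$ negatively. However, your implementation has a genuine gap at its central step. You assert that the locus $H_{Q}\subset\Hilb_{Q}(X)$ of points parameterizing integral one-dimensional fibers ``is a computable locally closed subset'' because reducedness, irreducibility, and dimension can be tested for each individual fiber. That inference is invalid: being able to decide a property for any single, explicitly given fiber does not produce an algorithm that outputs defining equations of the locus of \emph{all} parameter points whose fiber has that property (one cannot run the test on infinitely many fibers). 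The locus of geometrically integral fibers is indeed open in the base (this is a theorem from EGA IV), but neither your argument nor the paper's toolkit supplies a way to compute it, and this computation is exactly the load-bearing step of your algorithm.

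The paper's own proof is organized precisely to avoid this issue. It enumerates irreducible curves $C_{1},C_{2},\dots$ on $X$ one at a time, so integrality is only ever tested for a single explicitly presented subscheme, which is decidable; for each $C_{n}$ it computes the \emph{connected component} $W$ of $\Hilb_{P_{n}}(X)$ containing $[C_{n}]$ --- connected components of an explicitly presented scheme are computable, unlike your locus $H_{Q}$ --- and then checks whether the universal family over $W$ maps onto $X$, halting if moreover $C_{n}\cdot\cL<0$. This certifies that $C_{n}$ moves in a covering family without ever describing the set of integral members. Two smaller inaccuracies in your write-up: since $H_{Q}$ is only open in $\Hilb_{Q}(X)$, the restricted family is quasi-projective, so ``dominant, equivalently surjective (since both are projective)'' is wrong as stated (dominance, checked via the scheme-theoretic image as in Remark \ref{rem:proj elim}, is the correct and sufficient condition); and ``pick a general member'' needs an algorithmic substitute, such as computing a closed point of the component or exploiting the constancy of $\cL\cdot C_{t}$ in a connected flat family, which you do note. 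Your argument can be repaired either by following the paper's anchoring at enumerated irreducible curves, or by testing geometric integrality only of the \emph{generic} fiber of the universal family over each irreducible component of $\Hilb_{Q}(X)$ (a single decidable test); with such a repair the rest of your assembly goes through.
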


\begin{proof}
We enumerate all irreducible curves on $X$ as $C_{1},C_{2},\dots$.
Consider the following algorithm:
\begin{enumerate}
\item Put $n=1$.
\item Check whether  $C_{n}$ moves in a family covering $X$ as follows.
We first compute the Hilbert polynomial $P_{n}$ of $C_{n}$ and then
compute the connected component $W$ of $\Hilb_{P_{i}}(X)$ containing
$[C_{n}]$. We then check whether  the universal family $\cU_{W}$
on $W$ maps onto $X$; $C_{n}$ is a movable curve if and only if
this is the case. When $C_{n}$ is movable, we compute the intersection
number $C_{n}\cdot D$ and stop the algorithm if $C_{n}\cdot D<0$.
\item Put $n=n+1$ and go back to (2).
\end{enumerate}
From \cite[0.2 Theorem]{boucksom2013thepseudoeffective}, this algorithm
stops after finitely many steps if and only if $D$ is not pseudo-effective.
The proposition follows.
\end{proof}
In summary, the following properties of invertible sheaves on a smooth
irreducible projective variety are semi-decidable:
\begin{enumerate}
\item Being big.
\item Not being nef.
\item Not being pseudo-effective.
\end{enumerate}
It is now quite natural to ask:
\begin{problem}
Are the three properties, big, nef and pseudo-effective, decidable?
\end{problem}

\begin{rem}
\label{rem:nef and big}Note that when the given invertible sheaf
is known to be nef, then we only need to compute the intersection
number $\cL^{\dim X}$ to check whether  $\cL$ is big (see \cite[Theorem 2.2.16]{lazarsfeld2004positivity2}).
\end{rem}

In the case of the canonical sheaf $\omega_{X}$, the most important
invetible sheaf, we may take advantage of the following conjectures:
\begin{conjecture}[The abundance conjecture]
For a smooth projective variety $X$, the canonical sheaf $\omega_{X}$
is nef if and only if it is semi-ample (that is, $\cL^{n}$ is globally
generated for some $n>0$). 
\end{conjecture}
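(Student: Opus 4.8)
The plan must open with a caveat: the abundance conjecture is one of the central open problems of the minimal model program, with no unconditional proof known in dimension $\ge 4$. What follows is therefore a strategy together with the cases that are genuinely theorems, and an honest flag on the obstruction. The forward implication is immediate: if $\omega_X^{\otimes n}$ is globally generated for some $n>0$, it defines a morphism, so $\omega_X^{\otimes n}\cdot C \ge 0$ and hence $\omega_X \cdot C \ge 0$ for every irreducible curve $C$; thus $\omega_X$ is nef. All the content lies in the converse, nef $\Rightarrow$ semi-ample.

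First I would recast the converse numerically. Writing $\kappa(X)$ for the Kodaira dimension and $\nu(\omega_X)$ for the numerical dimension of $\omega_X$, one always has $\kappa(X) \le \nu(\omega_X)$, and for a nef $\omega_X$ semi-ampleness is equivalent to the equality $\kappa(X) = \nu(\omega_X)$ (granting finite generation of the canonical ring as a further hard input). This splits the problem into \emph{non-vanishing}, namely $\kappa(X) \ge 0$ whenever $\omega_X$ is nef, and \emph{abundance proper}, namely $\kappa(X) = \nu(\omega_X)$. For abundance proper, the case $\nu(\omega_X) = \dim X$, where $\omega_X$ is nef and big, follows directly from Kawamata's base-point-free theorem together with Kawamata--Viehweg vanishing. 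For intermediate numerical dimension $0 < \nu(\omega_X) < \dim X$, the approach is to pass to the Iitaka fibration associated to $\omega_X$ and descend the question to the base through the Fujino--Mori canonical bundle formula, reducing to a lower-dimensional abundance statement and inducting on $\dim X$; semipositivity of the Hodge-theoretic moduli part is the technical engine here.

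The hard part, and the reason the statement remains a conjecture rather than a theorem, is the non-vanishing problem combined with carrying the canonical-bundle-formula induction through in dimension $\ge 4$. Non-vanishing amounts to producing a nonzero section of some pluricanonical system on a minimal model, and no argument valid in all dimensions is known. The realistic proposal is therefore to invoke the classical surface case (the Enriques--Kodaira classification) and the threefold case (Miyaoka and Kawamata, in characteristic zero) as established, reduce the general case to non-vanishing plus the inductive descent described above, and record the non-vanishing problem as the outstanding obstruction that the minimal model program has not yet resolved.
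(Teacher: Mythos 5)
The paper does not prove this statement at all: it is stated as a \emph{conjecture} precisely because it is the open abundance conjecture of the minimal model program, and the paper only uses it as a hypothesis in a subsequent conditional result (decidability of nefness of $\omega_{X}$ assuming abundance). Your treatment --- proving the easy direction (semi-ample $\Rightarrow$ nef), correctly flagging the converse as open in dimension $\ge 4$, and accurately recording the known surface and threefold cases and the non-vanishing obstruction --- is consistent with the paper's stance, so there is no gap to reconcile.
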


\begin{conjecture}[The non-vanishing conjecture]
For a smooth projective variety $X$, the canonical sheaf $\omega_{X}$
is pseudo-effective if and only if it is $\QQ$-linearly equivalent
to an effective $\QQ$-divisor. 
\end{conjecture}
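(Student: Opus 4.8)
The forward implication is immediate and requires no hypothesis: an effective $\QQ$-divisor defines a class in the pseudo-effective cone of $\NS(X)\otimes\RR$, and since that cone is closed, any divisor class that is $\QQ$-linearly equivalent to an effective one is pseudo-effective. All of the content therefore lies in the converse, namely that a pseudo-effective $K_X$ is $\QQ$-effective. My plan is to attack this through the minimal model program in two stages: first replace $X$ by a birational model on which $K_X$ is nef, then produce a section there.

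For the first stage I would run a $K_X$-MMP. Since $\omega_X$ is assumed pseudo-effective, the expectation is that an MMP with scaling terminates with a minimal model $\pi\colon X\dashrightarrow X'$, where $X'$ has terminal (in particular klt) singularities and $K_{X'}$ is nef. Because every step of such a program is $K$-nonnegative, the plurigenera are preserved, so the canonical rings of $X$ and $X'$ agree; it therefore suffices to exhibit an effective $\QQ$-divisor in the class of $K_{X'}$, equivalently to show $h^{0}(X',\omega_{X'}^{\otimes m})>0$ for some $m>0$. This reduces the problem to a non-vanishing statement for the \emph{nef} divisor $K_{X'}$.

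For the second stage I would invoke a non-vanishing theorem to convert the positivity of $K_{X'}$ into an actual section. The classical route is a Shokurov-type argument: perturb $K_{X'}$ by a small ample divisor, apply Kawamata--Viehweg vanishing on a suitable log resolution to build sections, and pass to a limit to descend them to $K_{X'}$ itself. The analytic alternative is to equip the pseudo-effective class with a singular Hermitian metric of nonnegative curvature current and run an Ohsawa--Takegoshi/Siu extension argument to produce a nonzero global section of a power. Either way, one is trying to deduce $\kappa(K_{X'})\ge 0$ from the numerical positivity already in hand.

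The hard part is exactly this second stage, which is the heart of the abundance philosophy: passing from numerical positivity (nefness, or merely nonnegative numerical dimension) to an honest section is an open problem in dimension at least four, established unconditionally only in low dimensions and in special cases. A secondary but still serious gap lies in the first stage, since termination of the MMP with scaling in the merely pseudo-effective, non-big case is itself not known in general. Thus my proposal does not settle the non-vanishing conjecture but reduces it to these two deep inputs, which is precisely why the statement remains a conjecture and why the paper can only list it as a hypothesis that, when available, would help decide nefness and pseudo-effectivity of $\omega_X$.
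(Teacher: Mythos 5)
This statement is the non-vanishing conjecture, which the paper presents as an \emph{open conjecture} and never proves: it only cites Birkar for its significance, Hashizume for the equivalence with Birkar's formulation, and then uses it as a hypothesis in the proposition on decidability of pseudo-effectivity of $\omega_{X}$. Your assessment is therefore exactly right and consistent with the paper's treatment: the direction from $\QQ$-effectivity to pseudo-effectivity is immediate, the converse is the open content, and the two gaps you identify (termination of the MMP with scaling in the pseudo-effective, non-big case, and non-vanishing for the nef canonical class of a minimal model) are precisely why the statement remains a conjecture rather than a theorem.
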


The abundunce conjecture is recognized as one of the most important
conjectures in the minimal model program. The importance of the non-vanishing
conjecture was pinned down by Birkar \cite{birkar2011onexistence}.
The above form of the non-vanishing conjecture is slightly different
from the one considered by Birkar. However Hashizume \cite{hashizume2018onthe}
proved that they are equivalent. 
\begin{prop}
Let $X$ be a smooth irreducible projective variety.
\begin{enumerate}
\item If the abundance conjecture holds for $X$, then the nefness of $\omega_{X}$
is decidable. 
\item If the weak nonvanishing conjecture holds for $X$, then the pseudo-effectivity
of $\omega_{X}$ is decidable.
\end{enumerate}
\end{prop}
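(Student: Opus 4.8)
The plan is to deduce decidability from the combination of two semi-decision procedures, using the general principle that a property $P$ is decidable as soon as both $P$ and its negation $\lnot P$ are semi-decidable. Concretely, I would run the two semi-decision procedures in parallel, alternating one step of each. Since exactly one of $P$ and $\lnot P$ holds for the given input, the corresponding procedure is guaranteed to halt after finitely many steps, and I read off the answer from whichever halts. For both parts of the proposition the \emph{negative} half of such a pair is already available from the preceding discussion: not being nef is semi-decidable (enumerate irreducible curves and compute intersection numbers until a negative one appears), and not being pseudo-effective is semi-decidable by Proposition \ref{prop: not pseudo-eff}. Thus in each case all that remains is to supply semi-decidability of the \emph{positive} property, and this is exactly where the hypothesis enters.

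For part (1), I would use the abundance conjecture to replace nefness of $\omega_X$ by semi-ampleness: under the conjecture, $\omega_X$ is nef if and only if $\omega_X^{\otimes n}$ is globally generated for some $n>0$. Semi-ampleness is manifestly semi-decidable. I would enumerate $n=1,2,3,\dots$, compute the sheaf $\omega_X^{\otimes n}$ (the canonical sheaf of a smooth variety and its tensor powers being computable as graded modules over the homogeneous coordinate ring), and test global generation of each via Proposition \ref{prop: global gen}. If $\omega_X$ is semi-ample this search halts; if not, it runs forever, but that case is caught by the parallel ``not nef'' procedure. Interleaving the two yields a decision procedure for nefness.

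For part (2), I would likewise use the nonvanishing conjecture to replace pseudo-effectivity by an effectivity condition: under the conjecture, $\omega_X$ is pseudo-effective if and only if it is $\QQ$-linearly equivalent to an effective $\QQ$-divisor, which, writing $\omega_X\cong\cO_X(D)$, amounts to the existence of some $n>0$ with $\H^0(X,\omega_X^{\otimes n})\neq 0$. This is semi-decidable: I would enumerate $n$ and compute $\H^0(X,\omega_X^{\otimes n})$ (the cohomology being computable, cf.\ Remark \ref{rem:push} applied to the structure morphism $X\to\Spec k$), stopping as soon as a nonzero group appears. Running this in parallel with the ``not pseudo-effective'' procedure of Proposition \ref{prop: not pseudo-eff} decides pseudo-effectivity.

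The whole content of the argument lies in the two conjectures, which are precisely what convert the otherwise intractable positive directions, nefness and pseudo-effectivity, each defined by infinitely many curve-intersection inequalities, into enumerable existence statements over $n$. I do not expect a serious computational obstacle: the only routine points to verify are that $\omega_X^{\otimes n}$ and its $\H^0$ are effectively computable for smooth $X$, and that the parallel interleaving of the two semi-decision procedures is legitimate. The genuine hardness is hidden in the hypotheses themselves; without them I would have no way to certify nefness or pseudo-effectivity in finite time, which is why the statements are phrased conditionally.
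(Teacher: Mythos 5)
Your proposal is correct, and its overall shape — decidability from a pair of semi-decision procedures run in parallel, with the conjectures supplying the positive halves and the already-established semi-decidability of ``not nef'' and ``not pseudo-effective'' (Proposition \ref{prop: not pseudo-eff}) supplying the negative halves — is exactly the paper's strategy. For part (1) your argument coincides with the paper's proof: the paper's algorithm interleaves, at step $n$, a global-generation test for $\omega_{X}^{\otimes n}$ (via Proposition \ref{prop: global gen}) with the test $C_{n}\cdot\omega_{X}<0$ for the $n$-th enumerated irreducible curve. For part (2) you take a genuinely different, and in fact simpler, route for the positive semi-decision: you correctly observe that ``$\omega_{X}$ is $\QQ$-linearly equivalent to an effective $\QQ$-divisor'' is equivalent to ``$\H^{0}(X,\omega_{X}^{\otimes n})\neq0$ for some $n>0$,'' and then just enumerate $n$ and compute coherent cohomology (Remark \ref{rem:push}). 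The paper instead enumerates all effective $\QQ$-divisors and, for each, computes its class in $\H^{2}(X(\CC),\QQ)$ by the topological machinery of Section \ref{sec:Computing-intersection-numbers} (Simpson's algorithm), checking whether it coincides with the class of $\omega_{X}$; this detects homological, hence numerical, equivalence to an effective divisor, which under the conjecture is also equivalent to pseudo-effectivity. What your version buys is the elimination of any transcendental computation: plurigenus non-vanishing is a purely algebraic, Gr\"obner-basis-level test, so your proof of (2) is both shorter and rests on lighter computational foundations, while proving the same conditional statement.
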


\begin{proof}
(1) We first note that we can compute the canonical sheaf \cite[Section 5.6]{stillmancomputing}.
We enumerate all irreducible curves in $X$ as $C_{1},C_{2},\dots$.
Consider the following algorithm:
\begin{enumerate}
\item Put $n=1$.
\item We check whether  $\omega_{X}^{\otimes n}$ is globally generated.
If this is the case, then stop the algorithm and output True.
\item We check whether  $C_{n}\cdot\omega_{X}<0$. If this is the case,
then stop the algorithm and output False. 
\item Put $n=n+1$ and go to (2). 
\end{enumerate}
If the abundance conjecture holds, then this algorithm always stops
after finitely many steps and outputs True if $\omega_{X}$ is nef
and False if $\omega_{X}$ is not nef. 

(2) Effective $\QQ$-divisors on $X$ are enumerable. For each of
them, we can compute its class in $\H^{2}(X(\CC),\QQ)$ by the method
explained in Section \ref{sec:Computing-intersection-numbers} and
check whether it coincides with the class of $\omega_{X}$. From the
non-vanishing conjecture, we see that the pseudo-effectivity of $\omega_{X}$
is semi-decidable. Combining this with Proposition \ref{prop: not pseudo-eff}
shows the assertion.
\end{proof}
\begin{rem}
\label{rem: bigness of omega}If $\dim X\le3$, then the bigness of
$\omega_{X}$ is also decidable. Indeed, from \cite{hacon2006boundedness,takayama2006pluricanonical,tsuji2006pluricanonical},
for each dimension $d$, there exists a positive integer $n_{d}$
such that for every smooth variety $X$ of general type and of dimension
$d$ and for every integer $n\ge n_{d}$, the rational map $\Phi_{\omega_{X}^{\otimes n}}$
is birational onto the image. Moreover, for $d\le3$, we can take
$n_{1}=3$, $n_{2}=5$ , $n_{3}=126$ (see \cite{bombiericanonical,chen2010explicit});
we only need to check whether $\Phi_{\omega_{X}^{\otimes n_{d}}}$
is birational onto the image. To generalize this argument to dimensions
$\ge4$, we need to compute $n_{d}$. 

If $\omega_{X}$ is nef, then we can check its bigness in any dimension,
see Remark \ref{rem:nef and big}. If $\omega_{X}$ is not nef, then
we may run the minimal model program. As an output of the program,
we would get a Mori fiber space or a minimal model birational to the
given variety $X$. In the former case, $\omega_{X}$ is not pseudo-effective,
in particular, not big. In the latter case, we can check the bigness
of $\omega_{X}$ by computing the intersection number $(\omega_{X})^{\dim X}$.
This strategy provides motivation for studying the following problem,
which would be important also on its own right: 
\end{rem}

\begin{problem}
Describe each step of the minimal model program as a strict algorithm,
starting from algorithmically finding a $\omega_{X}$-negative ray
of the cone of curves. 
\end{problem}

\subsection{\label{subsec:Approximating cones}Approximating nef and pseudo-effective
cones}

Let $\NS(X)_{\RR}:=\NS(X)\otimes\RR$ denote the Néron-Severi group
tensored with $\RR$, This is a finite-dimensional $\RR$-vector space
and its dimension $\rho(X)$ is called the Picard number of $X$.
The \emph{nef cone} of $X$, denoted by $\Nef(X)$, is the smallest
closed convex cone in $\NS(X)_{\RR}$ such that, for an invertible
sheaf $\cL$, the class $[\cL]$ belongs to it if and only if $\cL$
is nef. The \emph{pseudo-effective cone $\PEff(X)$ }is similarly
defined. The \emph{ample cone }and the \emph{big cone }are the interiors
of the nef cone and the pseudo-effective cone respectively. 

As we do not have an algorithm to decide whether a given invertible
sheaf is big/nef, we can not compute the cones $\PEff(X)$ and $\Nef(X)$
at least for now. However, if we know the value of the Picard number
$\rho(X)$, then we can approximate these cones with arbitrary precision.
Note that, if we know the value of $\rho(X)$, then we can compute
the subspace $\NS(X)_{\RR}\subset\H^{2}(X(\CC),\RR)$ by giving a
basis of it. To do so, we only need to compute classes $[D]\in\H^{2}(X(\CC),\RR)$
of divisors $D\subset X$, until we have enough to span a subspace
of dimension $\rho(X)$. Poonen, Testa and van Luijk \cite{poonen2015computing}
gave an algorithm to compute $\rho(X)$, assuming the Tate conjecture.
In particular, we can compute $\rho(X)$ if $X$ is a K3 surface.
\begin{prop}
\label{prop: approx cones}Let $X$ be a smooth irreducible projective
variety. Suppose that we know the value of $\rho(X)$. We fix a metric
on $\NS(X)_{\RR}$. Let $S\subset\NS(X)_{\RR}$ be the unit sphere
with center at the origin. Then, for any positive real number $\epsilon>0$,
we can algorithmically construct rational polyhedral convex cones
$A_{\epsilon}$ and $B_{\epsilon}$ such that $A_{\epsilon}\subset\Nef(X)\subset B_{\epsilon}$
and $B_{\epsilon}\cap S$ is contained in the $\epsilon$-neighborhood
of $A_{\epsilon}\cap S$. Similarly for $\PEff(X)$. 
\end{prop}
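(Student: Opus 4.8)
The plan is to squeeze each cone between an increasing family of rational polyhedral cones built from classes known to lie inside it and a decreasing family of rational polyhedral cones built from half-spaces known to contain it, running the two constructions in parallel until a decidable test certifies that they agree to within $\epsilon$ on $S$. I describe $\Nef(X)$ in detail; $\PEff(X)$ is handled by the parallel construction indicated at the end. As noted before the statement, knowing $\rho(X)$ I first compute a basis of $\NS(X)_\RR\subseteq\H^2(X(\CC),\RR)$ by listing divisor classes computed as in Section \ref{sec:Computing-intersection-numbers} until $\rho(X)$ of them are independent; in such a basis every class $[\cL]$ or $[C]$ is an integer vector, so all cones produced below are genuinely rational polyhedral.

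For the inner approximation I enumerate invertible sheaves $\cL$ on $X$, test ampleness of each by Proposition \ref{prop: ample}, and let $A_k$ be the convex cone generated by the classes $[\cL]\in\NS(X)_\RR$ of the ample sheaves found among the first $k$ candidates. Since an ample class lies in $\Nef(X)$ and $\Nef(X)$ is a convex cone, $A_k\subseteq\Nef(X)$. For the outer approximation I enumerate irreducible curves $C\subseteq X$ as in the proof of Proposition \ref{prop: not pseudo-eff}, compute each $[C]$ by Section \ref{sec:Computing-intersection-numbers}, and set $B_k:=\bigcap_{i\le k}\{x\in\NS(X)_\RR:x\cdot[C_i]\ge0\}$. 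By Kleiman's duality $\Nef(X)=\overline{NE}(X)^{\vee}$, each half-space contains $\Nef(X)$, so $\Nef(X)\subseteq B_k$.

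The heart of the matter is convergence. Because the dual of a convex cone is unchanged under taking its closure, the intersection of $\{x\cdot[C]\ge0\}$ over \emph{all} irreducible curves $C$ equals $\overline{NE}(X)^{\vee}=\Nef(X)$; hence the decreasing sequence of compact sets $B_k\cap S$ has intersection $\Nef(X)\cap S$ and so converges to it in the Hausdorff metric, giving $B_k\cap S\subseteq N_{\epsilon/2}(\Nef(X)\cap S)$ for all large $k$, where $N_{\delta}(\cdot)$ denotes the $\delta$-neighborhood. On the inner side, the ample cone is the interior of $\Nef(X)$ and the rational ample classes are dense in $\Nef(X)$; as each such class is a positive rational multiple of an integral ample class, the directions of the integral ample classes $[\cL]$ are dense in the compact set $\Nef(X)\cap S$, so a finite net of $\Nef(X)\cap S$ drawn from these directions already lies in some $A_{k_0}\cap S$, whence $\Nef(X)\cap S\subseteq N_{\epsilon/2}(A_k\cap S)$ for all $k\ge k_0$. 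Combining the two inclusions yields $B_k\cap S\subseteq N_{\epsilon}(A_k\cap S)$ for all sufficiently large $k$.

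Finally, to obtain a terminating algorithm I test, after each step, whether $B_k\cap S\subseteq N_{\epsilon}(A_k\cap S)$. Since $A_k$, $B_k$, the fixed metric, $S$ and the $\epsilon$-neighborhood are all semialgebraic with algebraic coefficients, this containment is the truth of a first-order sentence over a real closed field and is decidable by quantifier elimination; by the convergence just proved the test eventually succeeds, and I output $A_\epsilon:=A_k$ and $B_\epsilon:=B_k$, which satisfy $A_\epsilon\subseteq\Nef(X)\subseteq B_\epsilon$ by construction. For $\PEff(X)$ the inner cones are generated instead by classes of effective divisors, which lie in $\PEff(X)$ and whose cone is dense in it, while the outer cones intersect the half-spaces $\{x\cdot[C]\ge0\}$ over those curves $C$ that move in a family covering $X$---detected by the Hilbert-scheme criterion of Proposition \ref{prop: not pseudo-eff}---using the duality $\PEff(X)=\overline{NM}(X)^{\vee}$ of \cite{boucksom2013thepseudoeffective} in place of Kleiman's. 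The main obstacle is precisely this convergence-and-termination bookkeeping: one must know that the inner and outer rational polyhedral families squeeze down to the same, generally non-polyhedral, cone and that the squeezing can be recognized algorithmically, which the duality theorems together with the decidability of the first-order theory of the reals supply.
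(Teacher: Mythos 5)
Your proof is correct and follows the same strategy as the paper's: squeeze $\Nef(X)$ between inner cones $A_k$ spanned by classes of ample divisors and outer cones $B_k$ cut out by the half-spaces dual to irreducible curve classes, then run an effective test for $\epsilon$-agreement on $S$, with movable curves replacing curves on the outer side for $\PEff(X)$. You differ in two execution details, both defensible and arguably cleaner. For termination, the paper checks that $B_n$ is strongly convex and that its extremal directions on $S$ lie within $\epsilon$ of $A_n$, whereas you decide the inclusion $B_k\cap S\subseteq N_{\epsilon}(A_k\cap S)$ outright via decidability of the first-order theory of real closed fields; this matches the required conclusion verbatim and avoids the geometric bookkeeping needed to reduce the containment to a vertex test (your route tacitly requires $\epsilon$ and the chosen metric to be given by algebraic data, but since shrinking $\epsilon$ only strengthens the statement one may assume $\epsilon\in\QQ$, and the same issue is implicit in the paper's vertex check). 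You also make explicit the convergence that the paper compresses into ``for $n\gg0$'': Hausdorff convergence of the decreasing compacts $B_k\cap S$ to $\Nef(X)\cap S$ via Kleiman duality, and density of integral ample directions plus a finite-net argument on the inner side. Finally, for $\PEff(X)$ you generate the inner cones from effective classes rather than the paper's big classes; since $\PEff(X)$ is by definition the closure of the effective cone and effectivity is decidable by computing $\H^{0}$, this is simpler than the paper's route, which needs an interleaved listing procedure for big divisors precisely because bigness is only semi-decidable.
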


\begin{proof}
Let $\rho$ denote the Picard number of $X$. From Proposition \ref{prop: ample},
we can enumerate all the ample divisors on $X$ as $D_{1},D_{2},\dots$.
Let $A_{n}=\sum_{i=1}^{n}\RR_{\ge0}[D_{i}]$ be the convex cone generated
by $[D_{1}],\dots,[D_{n}]$. The closure of $\bigcup_{n\ge0}A_{n}$
is the nef cone $\Nef(X)$. In particular, each $A_{n}$ is a rational
convex polyhedral cone contained in $\Nef(X)$. We can also enumerate
the irreducible curves in $X$ as $C_{1},C_{2},\dots$. Let 
\begin{align*}
B_{n}^{c} & :=\bigcup_{i=1}^{n}\{x\in\NS(X)_{\RR}\mid x\cdot C_{i}<0\}\text{ and}\\
B_{n} & :=\NS(X)_{\RR}\setminus B_{n}^{c}=\bigcap_{i=1}^{n}\{x\in\NS(X)_{\RR}\mid x\cdot C_{i}\ge0\}.
\end{align*}
We see that $\bigcup_{n\ge0}B_{n}^{c}=\NS(X)_{\RR}\setminus\Nef(X)$.
Thus each $B_{n}$ is a rational convex polyhedral cone containing
$\Nef(X)$. It is also strongly convex (that is, it has a vertex at
the origin) for $n\gg0$. We have got two sequences $(A_{n})_{n}$
and $(B_{n})_{n}$ of rational convex polyhedral cones approximating
$\Nef(X)$ from inside and outside respectively. Therefore, for $n\gg0$,
$A_{n}$ and $B_{n}$ satisfy the desired condition. To see for which
value of $n$ this is the case, we first check whether  $B_{n}$ is
strongly convex. If this is the case, then for each vertex $w\in B_{n}\cap S$,
we check whether  every vertex $w\in B_{n}\cap S$ is contained in
the $\epsilon$-neighborhood of $A_{n}$. If this is the case, $B_{n}$
is contained in the $\epsilon$-neighborhood of $A_{n}$. This completes
the proof for the nef cone $\Nef(X)$.

As for the pseudo-effective cone $\PEff(X)$, we only need to replace
ample divisors with big divisors and irreducible curves with movable
irreducible curves. To enumerate movable irreducible curves, we can
use the algorithm in the proof of \ref{prop: not pseudo-eff}. To
enumerate big divisors, we can use the following algorithm: We first
enumerate all the divisors on $X$ as $D_{1},D_{2},\dots$.
\begin{enumerate}
\item Put $n=1$ and put $b=()$, the empty ordered tuple.
\item For each $i,j\le n$, if $\Phi_{i\cdot D_{j}}$ is a birational map
onto the image and if $D_{j}\notin b$, then append $D_{j}$ to $b$.
\item Put $n=n+1$ and go to (2).
\end{enumerate}
For every big divisor $D$ on $X$, the above algorithm appends $D$
to $b$ after finitely many steps. Thus, for every positive integer
$n$, we can algorithmically construct the $n$-th big divisor. (Thus,
big divisors on $X$ are listable. But this does not mean that the
bigness of each divisor is decidable.)
\end{proof}

\section{\label{sec: K3}K3 surfaces}

In this section, we discuss the isomorphism problem for K3 surfaces,
which would be natural as the next case to study after the one-dimensional
case and the case with $\omega_{X}$ or $\omega_{X}^{-1}$ big were
treated in Sections \ref{sec:One-dimensional-schemes} and \ref{sec: genetal type Fano}
respectively. The main result of this section is the decidability
of the isomorphism problem for K3 surfaces with an automorphism group
finite. 
\begin{prop}
\label{prop: compute Nef}Let $X$ be a K3 surface. If $\Aut(X)$
is finite, then we can compute the nef cone $\Nef(X)$ by giving finitely
many effective divisors $D_{1},\dots,D_{n}$ such that $\Nef(X)=\sum_{i=1}^{n}\RR_{\ge0}[D_{i}]$. 
\end{prop}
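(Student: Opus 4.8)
The plan is to reduce everything to explicit linear algebra on the Néron–Severi lattice and, as the essential new ingredient, to prove that although nefness of an invertible sheaf is not known to be decidable in general, it \emph{is} decidable on a K3 surface. The finiteness of $\Aut(X)$ will then be exactly what makes the resulting enumeration halt.

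First I would make the lattice $\NS(X)$ explicit. Since $X$ is a K3 surface the Tate conjecture holds, so by the result of Poonen, Testa and van Luijk \cite{poonen2015computing} the Picard number $\rho(X)$ is computable; as in the proof of Proposition \ref{prop: approx cones} one then produces a basis $[D_{1}],\dots,[D_{\rho}]$ of $\NS(X)$ consisting of explicit divisor classes, and using Proposition \ref{prop: intersection} one computes the intersection form. Because $\mathrm{Pic}(X)=\NS(X)$ for a K3 surface, every class $v=\sum a_{i}[D_{i}]$ is realized by the explicit invertible sheaf $\bigotimes\cO_{X}(D_{i})^{\otimes a_{i}}$, whose cohomology we can compute. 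The class $H:=[\cL]$ of the given very ample sheaf is ample, and by definition $\Nef(X)=\bigcap_{C}\{x\in\NS(X)_{\RR}\mid x\cdot C\ge0\}$, the intersection over all irreducible curves $C\subset X$, which can be enumerated as in the proof of Proposition \ref{prop: approx cones}.

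The key step is a lemma: for $v\in\NS(X)$, nefness of $v$ is decidable. If $v^{2}<0$ or $v\cdot H\le0$ (with $v\ne0$), then $v$ is not nef, so assume $v^{2}\ge0$ and $v\cdot H>0$. Since $\NS(X)$ has signature $(1,\rho-1)$ and $H^{2}>0$, any irreducible curve $C$ with $v\cdot C<0$ must satisfy $C^{2}=-2$. I would bound the degree of such a violating curve: writing $C=\alpha v+\beta H+C'$ with $C'$ orthogonal to $v$ and $H$, the plane $\langle v,H\rangle$ has signature $(1,1)$ and its orthogonal complement is negative definite, so $(C')^{2}\le0$; expanding $C^{2}=-2$ and solving the $2\times2$ system for $\alpha,\beta$ in terms of $v^{2},v\cdot H,H^{2}$ yields $C\cdot H\le\sqrt{2\left((v\cdot H)^{2}/v^{2}-H^{2}\right)}$ when $v^{2}>0$ and $C\cdot H\le v\cdot H$ when $v^{2}=0$. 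In either case this is an explicitly computable nonnegative real (note $(v\cdot H)^{2}>v^{2}H^{2}$), and there are only finitely many lattice vectors $C$ with $C^{2}=-2$ and $C\cdot H$ below it, since $H^{\perp}$ is negative definite and the relevant locus is compact. Enumerating these finitely many $(-2)$-classes and testing the sign of $v\cdot C$ then decides nefness of $v$.

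With nefness decidable, I would compute $\Nef(X)$ as follows. By the theorem of Pjateckiĭ-Šapiro–Šafarevič and Sterk (see also \cite{barth2004compact}), the hypothesis that $\Aut(X)$ is finite is equivalent to $\Nef(X)$ being rational polyhedral; in particular only finitely many irreducible curves support its facets. Enumerating the irreducible curves $C_{1},C_{2},\dots$, set $B_{n}:=\bigcap_{i=1}^{n}\{x\mid x\cdot C_{i}\ge0\}$, a rational polyhedral cone containing $\Nef(X)$. At each stage I compute the extremal rays of $B_{n}$, scale each to a primitive integral generator $v$, and test whether $v$ is nef by the lemma. If every extremal ray of $B_{n}$ is nef, then by convexity $B_{n}\subseteq\Nef(X)$, hence $B_{n}=\Nef(X)$; since $\Nef(X)$ is rational polyhedral this occurs at a finite stage — and this is precisely where the hypothesis on $\Aut(X)$ guarantees termination. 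Finally each extremal generator $v$ has $v^{2}\ge0$ and $v\cdot H>0$, so Riemann–Roch gives $h^{0}(X,\cO_{X}(v))\ge\chi(\cO_{X}(v))=2+v^{2}/2\ge1$; computing this space produces an explicit effective divisor $D_{i}$ with $[D_{i}]=v$, and $\Nef(X)=\sum_{i}\RR_{\ge0}[D_{i}]$. The main obstacle is the key lemma, i.e.\ the a priori degree bound on a violating $(-2)$-curve that makes nefness decidable on a K3 surface; the remainder is bookkeeping on top of the earlier sections (computability of $\rho$, of the intersection form, of cohomology, and the enumeration of irreducible curves), together with the structural input that a finite automorphism group forces $\Nef(X)$ to be rational polyhedral, which is what lets the enumeration halt.
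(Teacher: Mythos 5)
Your route is genuinely different from the paper's, and its core idea is attractive: you prove that nefness of an \emph{arbitrary} class is decidable on a K3 surface (via the light-cone/Hodge-index argument reducing to finitely many $(-2)$-classes of bounded degree), whereas the paper deliberately sidesteps this by only ever testing nefness of \emph{effective} divisors $D$, which is decidable for the trivial reason that one need only check $C\cdot D\ge0$ for the prime components $C$ of $\mathrm{Supp}(D)$. The paper then squeezes $\Nef(X)$ between an inner cone $A_{n}$ generated by nef-and-effective divisor classes and an outer cone $B_{n}$ cut out by effective divisor constraints, halting when $A_{n}=B_{n}$; you instead use only the outer approximation by curve constraints and certify its extremal rays nef. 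Your degree bounds check out (for $v^{2}>0$ one gets $(C\cdot H)^{2}v^{2}\le2\left((v\cdot H)^{2}-v^{2}H^{2}\right)$, and for $v^{2}=0$ integrality of $C\cdot v\le-1$ gives $C\cdot H<v\cdot H$), and your final Riemann--Roch step producing effective generators is correct. If completed, your lemma would in fact answer the paper's decidability Problem for nefness in the K3 case, which is more than the paper's proof establishes.

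However, there is a genuine gap: your key lemma requires enumerating all classes $C$ with $C^{2}=-2$ and $0<C\cdot H\le M$ \emph{in the actual integral lattice} $\NS(X)$, but the construction you invoke (the one in the proof of Proposition \ref{prop: approx cones}) only produces divisor classes spanning $\NS(X)_{\RR}$, i.e.\ a $\QQ$-basis / full-rank sublattice $L\subseteq\NS(X)$, not a $\ZZ$-basis of $\NS(X)$. A $(-2)$-curve whose class lies in $\NS(X)\setminus L$ would be invisible to your enumeration, so the test could falsely certify a non-nef class as nef, and your algorithm would then terminate with a cone strictly larger than $\Nef(X)$ --- a wrong output, not merely a non-halting run. (A second, smaller, unstated point cuts the other way: for the test to never give false negatives you also need that every lattice class with $C^{2}=-2$ and $C\cdot H>0$ is effective, which holds by Riemann--Roch on a K3 but must be said; this is also exactly why the enumeration must be over $\NS(X)$ itself and not over the dual lattice $L^{\sharp}\supseteq\NS(X)$, whose extra $(-2)$-vectors are not effective.) The gap is fixable: either invoke the full strength of \cite{poonen2015computing}, which computes the N\'eron--Severi group itself (with generating divisor classes), not merely its rank, or compute the saturation of $L$ inside $\H^{2}(X(\CC),\ZZ)$ using the simplicial model of Section \ref{sec:Computing-intersection-numbers} together with the Lefschetz $(1,1)$-theorem, since $\NS(X)=(L\otimes\QQ)\cap\H^{2}(X(\CC),\ZZ)$. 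Note that the paper's own proof never needs the integral lattice at all, which is precisely what your stronger lemma costs. Finally, two cosmetic repairs: handle $v$ proportional to $H$ separately (your signature-$(1,1)$ claim fails there, though such $v$ is ample), and only run the extremal-ray test once $B_{n}$ is strongly convex, as otherwise it has no extremal rays; termination also implicitly uses the standard fact that each facet of a polyhedral $\Nef(X)$ is supported by an irreducible curve class, which deserves a sentence.
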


\begin{proof}
From \cite{poonen2015computing}, we can compute the Picard number
$\rho(X)$ and compute $\NS(X)_{\RR}$ as explained in Section \ref{subsec:Approximating cones}.
For an effective divisor $D$ on $X$, we can check whether  it is
nef; we check whether  $C\cdot D\ge0$ for every prime divisor $C$
contained in the support of $D$. Therefore we can enumerate all the
effective divisors as $D_{1},D_{2},\dots$ and all the nef and effective
divisors as $N_{1},N_{2},\dots$. For each $n$, let 
\begin{align*}
A_{n} & :=\sum_{i=1}^{n}\RR_{\ge0}[N_{i}],\\
B_{n} & :=\bigcap_{i=1}^{n}\{x\in\NS(X)_{\RR}\mid x\cdot D_{i}\ge0\}.
\end{align*}
These are rational polyhedral convex cones satisfying
\begin{equation}
A_{n}\subset\Nef(X)\subset B_{n}.\label{eq:incl}
\end{equation}
If $\Aut(X)$ is finite, then $\PEff(X)$ is a rational polyhedral
cone spanned by effective classes \cite{kovacs1994thecone}. It follows
that every point of $\NS(X)_{\QQ}\cap\PEff(X)$ is represented by
an effective $\QQ$-divisor. Since $\Nef(X)$ is the dual cone of
$\PEff(X)$, it is also rational polyhedral and spanned by finitely
many points of $\NS(X)_{\QQ}$. Since $\Nef(X)\subset\PEff(X)$, these
points are represented by nef and effective divisors after multiplied
with some positive integer. We conclude that $\Nef(X)$ is a rational
polyhedral cone spanned by nef and effective classes. Therefore, for
$n\gg0$, inclusions \vpageref{eq:incl} are equalities. For each
$n$, we compute $A_{n}$ and $B_{n}$ and check whether  $A_{n}=B_{n}$.
If this equality holds, then the cone $A_{n}=B_{n}$ is the nef cone.
\end{proof}
\begin{prop}
For a K3 surface $X$, we can algorithmically decide whether  $\Aut(X)$
is finite.
\end{prop}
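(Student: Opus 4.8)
The plan is to show that both ``$\Aut(X)$ is finite'' and its negation are semi-decidable and then to run the two semi-decision procedures in parallel; exactly one halts, and we output the corresponding answer. The two procedures rest on the two directions of the equivalence, valid for a projective K3 surface,
\[
\Aut(X)\text{ is finite}\iff\Nef(X)\text{ is rational polyhedral}.
\]

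For the certificate of finiteness I would run the cone computation already carried out in the proof of Proposition \ref{prop: compute Nef}: using computability of $\rho(X)$ \cite{poonen2015computing} and of $\NS(X)_{\RR}\subset\H^{2}(X(\CC),\RR)$ (Section \ref{subsec:Approximating cones}), I build the sequences of rational polyhedral cones $A_{n}\subset\Nef(X)\subset B_{n}$ and halt as soon as $A_{n}=B_{n}$, declaring $\Aut(X)$ finite. Soundness is elementary: if $\Nef(X)$ is a (necessarily strongly convex, full-dimensional) rational polyhedral cone, then the image of $\Aut(X)$ in $O(\NS(X))$ stabilizes it, hence permutes its finitely many extremal rays, and an isometry fixing all extremal rays is the identity since they span $\NS(X)_{\RR}$; thus the image is finite, and as the kernel of $\Aut(X)\to O(\NS(X))$ is finite for K3 surfaces, so is $\Aut(X)$. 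Completeness is the theorem of Kovács \cite{kovacs1994thecone} invoked in Proposition \ref{prop: compute Nef}: when $\Aut(X)$ is finite, $\PEff(X)$ and hence its dual $\Nef(X)$ are rational polyhedral, so the computation does reach $A_{n}=B_{n}$.

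For the certificate of infiniteness I would enumerate the whole automorphism group and search for an element of infinite order. For each Hilbert polynomial $P$ I compute $\ulIso_{P}(X,X)$ by the method of Section \ref{sec:Hom and Iso schemes}; each such scheme is of finite type, in fact $0$-dimensional and reduced since $\H^{0}(X,T_{X})=0$, so I can list its $\overline{\QQ}$-points and thereby enumerate all of $\Aut(X)$. For each automorphism $f$ found I compute the induced isometry $f^{*}$ of $\NS(X)$ as a rational matrix, using the singular-cohomology machinery of Section \ref{sec:Computing-intersection-numbers} to compute $f^{*}[D_{i}]\in\H^{2}(X(\CC),\QQ)$ for a fixed divisor basis $[D_{1}],\dots,[D_{\rho}]$ of $\NS(X)_{\QQ}$ and then solving a linear system. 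Whether $f^{*}\in\GL_{\rho}(\QQ)$ has infinite order is decidable: $f^{*}$ has finite order if and only if its minimal polynomial is a product of distinct cyclotomic polynomials, which one checks by computing and factoring that polynomial. I halt and declare $\Aut(X)$ infinite as soon as some $f^{*}$ has infinite order.

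This last procedure is sound, since an element of infinite order forces $\Aut(X)$ to be infinite; its completeness is where the main work lies. The kernel of $\Aut(X)\to O(\NS(X))$ is finite for K3 surfaces (automorphisms acting trivially on $\NS(X)$ form a finite cyclic group acting by roots of unity on $\H^{2,0}$), so if $\Aut(X)$ is infinite then its image $G\subset O(\NS(X))\subset\GL_{\rho}(\ZZ)$ is infinite. By Minkowski's theorem the reduction $\GL_{\rho}(\ZZ)\to\GL_{\rho}(\FF_{3})$ is injective on torsion, whence every torsion subgroup of $\GL_{\rho}(\ZZ)$ is finite; an infinite $G$ therefore contains an element of infinite order, realized by some automorphism appearing in the enumeration, so the search terminates exactly when $\Aut(X)$ is infinite. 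The main obstacle is thus not any single computation—each reduces to tools developed earlier—but assembling these structural inputs, above all the finiteness of the kernel of the action on $\NS(X)$ (so that infinite order on $\NS(X)$ already witnesses infiniteness of $\Aut(X)$) together with the soundness of the rational-polyhedral certificate.
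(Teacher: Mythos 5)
Your proposal is correct, and its overall architecture is the same as the paper's: run two semi-decision procedures in parallel, with finiteness certified by the cone computation of Proposition \ref{prop: compute Nef} halting at $A_{n}=B_{n}$. The genuine difference is the infiniteness certificate. The paper's algorithm $\Theta'$ also enumerates $\Aut(X)$ through the schemes $\ulIso_{P_{n}}(X,X)$ (using, as you do, that $\H^{0}(X,\cT_{X})=0$ forces isolated points), but it halts as soon as \emph{more than $3840$} automorphisms have been counted, invoking Kond\=o's theorem that a finite subgroup of the automorphism group of a K3 surface has order at most $3840$; no further computation is needed beyond counting points. You instead halt upon finding a single automorphism $f$ whose induced action on $\NS(X)$ has infinite order, which requires three extra ingredients: computability of the matrix of $f^{*}$ on $\NS(X)$ (legitimately assembled from Section \ref{sec:Computing-intersection-numbers}), Nikulin's theorem that the kernel of $\Aut(X)\to O(\NS(X))$ is finite, and Minkowski's lemma that an infinite subgroup of $\GL_{\rho}(\ZZ)$ must contain an element of infinite order; all three steps are correct as you state them. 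The trade-off: the paper's route is computationally and logically leaner, resting on one black box (the explicit bound $3840$), whereas yours avoids any uniform bound on finite automorphism groups and so would transfer to situations where no such bound is available but the kernel of the action on the N\'eron--Severi lattice is known to be finite, at the cost of computing actions on cohomology and deciding finite order of rational matrices. A further nice feature of your write-up is the self-contained soundness argument for the finiteness certificate (a rational polyhedral, full-dimensional nef cone forces the image of $\Aut(X)$ to permute finitely many extremal rays with trivial stabilizer), which replaces the "only if" direction of the equivalence that the paper cites wholesale from Kov\'acs.
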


\begin{proof}
We define cones $A_{n}$ and $B_{n}$ in $\NS(X)_{\RR}$ as in the
proof of Proposition \ref{prop: compute Nef}. From \cite{kovacs1994thecone},
$A_{n}=B_{n}$ for $n\gg0$ if and only if $\Aut(X)$ is finite. Therefore
we have an algorithm which stops after finitely many steps exactly
when $\Aut(X)$ is finite. We denote this algorithm by $\Theta$. 

For an automorphism $f\colon X\to X$, the tangent space of $\ulAut(X)=\ulIso(X,X)$
at $[f]$ is isomorphic to $\H^{0}(X,\cT_{X})$ with $\cT_{X}$ denoting
the tangent sheaf. Since 
\[
\H^{0}(X,\cT_{X})^{\vee}=\H^{2}(X,\omega_{X}\otimes\Omega_{X})=\H^{2}(X,\Omega_{X})=0,
\]
the Aut scheme $\ulAut(X)$ has only isolated points. From \cite{kondo1999themaximum},
any finite subgroup of $\Aut(X)$ has order at most 3840. The following
algorithm stops after finitely many steps exactly when $\Aut(X)$
is infinite: We enumerate all the Hilbert polynomials as $P_{1},P_{2},\dots$.
\begin{enumerate}
\item Put $n=1$ and $\mathrm{numAuts}=0$. 
\item Put $\mathrm{numAuts}=\mathrm{numAuts}+\sharp\ulIso_{P_{n}}(X,X)$.
\item If $\mathrm{numAuts}>3840$, then stop.
\item Put $n=n+1$ and go to (2). 
\end{enumerate}
We denote this algorithm by $\Theta'$. Now the following algorithm
is the desired one:
\begin{enumerate}
\item Put $n=1$.
\item If $\Theta$ stops after $n$ steps, then stop and output Finite.
\item If $\Theta'$ stops after $n$ steps, then stop and output Infinite.
\item Put $n=n+1$ and go to (2).
\end{enumerate}
\end{proof}
\begin{thm}
\label{thm: decidable K3}For K3 surfaces $X$ and $Y$ with finite
automorphism groups, we can algorithmically decide whether  they are
isomorphic.
\end{thm}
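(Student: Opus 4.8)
The plan is to reduce, as in Theorems \ref{thm:1-dim proj} and \ref{thm:iso prob big}, to testing finitely many Iso schemes $\ulIso_{P}(X,Y)$, so the whole task is to produce from $X$ and $Y$ a finite list of polynomials guaranteed to contain the Hilbert polynomial $P_f$ of every isomorphism $f\colon X\to Y$. Let $\cL$ and $\cM$ be the very ample sheaves giving the embeddings of $X\subset\PP^{m-1}$ and $Y\subset\PP^{n-1}$. For an isomorphism $f$, the sheaf $\cL\otimes f^{*}\cM$ is again very ample, so (as in Definition \ref{def: Hilb poly of an iso}) $P_f$ is the Hilbert polynomial of $X$ with respect to $\cL\otimes f^{*}\cM$. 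Since $X$ is a K3 surface, $\omega_X\cong\cO_X$ and $\chi(\cO_X)=2$, so Riemann--Roch gives, for any invertible sheaf $\cN$,
\[
\chi(\cN^{\otimes t})=2+\tfrac{t^{2}}{2}\,\cN^{2}.
\]
Hence $P_f$ depends only on the single integer $(\cL\otimes f^{*}\cM)^{2}=\cL^{2}+2\,(\cL\cdot f^{*}\cM)+\cM^{2}$. The numbers $\cL^{2}$ and $\cM^{2}=(f^{*}\cM)^{2}$ are computable by Proposition \ref{prop: intersection}, so the only unknown is $\cL\cdot f^{*}\cM$, and it suffices to produce a finite, computable set of integers containing this value for all $f$.

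Next I would pin down the class $f^{*}[\cM]\in\NS(X)$. The pullback $f^{*}\colon\NS(Y)\to\NS(X)$ is a lattice isometry carrying $\Nef(Y)$ onto $\Nef(X)$ and sending $[\cM]$ to $f^{*}[\cM]$. Since $\Aut(X)$ and $\Aut(Y)$ are finite, Proposition \ref{prop: compute Nef} computes both nef cones as rational polyhedral cones, presented by their finitely many primitive extremal ray generators, and the intersection forms on $\NS(X)$ and $\NS(Y)$ are computable by Section \ref{sec:Computing-intersection-numbers}. The key finiteness is that there are only finitely many lattice isometries $\phi\colon\NS(Y)\to\NS(X)$ with $\phi(\Nef(Y))=\Nef(X)$: any such $\phi$ maps the primitive extremal ray generators of $\Nef(Y)$ bijectively to those of $\Nef(X)$, and since the former span $\NS(Y)_{\QQ}$, the map $\phi$ is determined by this bijection. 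I can therefore enumerate all such $\phi$ effectively: run over the bijections between the two finite sets of ray generators that preserve the intersection pairing, extend each $\QQ$-linearly, and keep those that are integral isometries mapping $\Nef(Y)$ onto $\Nef(X)$. For each surviving $\phi$, the class $\phi[\cM]$ is a candidate for $f^{*}[\cM]$; I compute $\cL\cdot\phi[\cM]$ and the polynomial $P_{\phi}(t)=2+\tfrac{t^{2}}{2}\bigl(\cL^{2}+2\,\cL\cdot\phi[\cM]+\cM^{2}\bigr)$.

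Because the genuine pullback $f^{*}$ is one of the enumerated isometries $\phi$, the true Hilbert polynomial $P_f$ occurs among the finitely many $P_{\phi}$. If there is no nef-cone-preserving isometry $\NS(Y)\to\NS(X)$ at all, then no isomorphism can exist and I output ``not isomorphic''. Otherwise I compute $\ulIso_{P_{\phi}}(X,Y)$ for each candidate $\phi$ (Section \ref{sec:Hom and Iso schemes}) and declare $X\cong Y$ precisely when one of these Iso schemes is nonempty.

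The main obstacle is the step producing the finite candidate set for $\cL\cdot f^{*}\cM$. One might hope to bound this integer using only $(f^{*}\cM)^{2}=\cM^{2}$ and the Hodge index theorem, but Hodge index gives only a lower bound $(\cL\cdot D)^{2}\ge\cL^{2}\,D^{2}$; if $\Nef(X)$ has an isotropic extremal ray $r$ with $r^{2}=0$ (which can occur even when $\Aut(X)$ is finite), then the nef classes $D$ with $D^{2}$ fixed form an infinite set on which $\cL\cdot D$ is unbounded. This is exactly why the argument must pass through the finiteness of the group of isometries preserving the now rational polyhedral nef cone rather than through a self-intersection bound, and it is here that the hypothesis of finite automorphism group---via the computability and polyhedrality of $\Nef(X)$ and $\Nef(Y)$ in Proposition \ref{prop: compute Nef}---is essential.
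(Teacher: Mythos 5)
Your proposal is correct and follows essentially the same route as the paper: compute the (rational polyhedral) nef cones via Proposition \ref{prop: compute Nef}, use the finiteness of cone-preserving lattice isomorphisms $\NS(Y)\to\NS(X)$ to produce finitely many candidate classes $[\cL]+\phi[\cM]$ and hence finitely many candidate Hilbert polynomials via Riemann--Roch, and then test the corresponding Iso schemes for non-emptiness. Your write-up merely makes explicit two points the paper leaves terse --- the enumeration of the cone-preserving isometries via bijections of primitive extremal ray generators, and the K3 form $\chi(\cN^{\otimes t})=2+\tfrac{t^{2}}{2}\cN^{2}$ of Riemann--Roch --- which is a faithful elaboration, not a different argument.
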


\begin{proof}
We compute the nef cones $\Nef(X)$ and $\Nef(Y)$. There exist at
most finitely many isomorphisms $g\colon\NS(Y)\to\NS(X)$ such that
$g(\Nef(Y))=\Nef(X)$. If there is no such isomorphism, then $X$
and $Y$ are not isomorphic. Suppose that this is not the case and
let $g_{1},\dots,g_{n}$ be all the isomorphisms with this property.
Let $\cL$ and $\cM$ be the given very ample sheaves on $X$ and
$Y$. We compute the Hilbert polynomial for each $[\cL]+g_{i}[\cM]$
and call it by $P_{i}$. Note that the Hilbert polynomial of an ample
invertible sheaf depends only on its numerical class. Indeed, the
numerical class $[\cN]$ of an ample invertible sheaf determines the
Euler characteristics $\chi(\cN)$ and $\chi(\cN^{2})$ from the Riemann-Roch
formula for surfaces. These values together with the one of $\chi(\cO_{X})$
determines the Hilbert polynomial of $\cN$. If there is an isomorphism
$f\colon X\to Y$, then the induced isomorphism $\NS(Y)\to\NS(X)$
is one of the $g_{i}$'s and the Hilbert polynomial of $f$ is one
of the $P_{i}$'s. Thus $[f]$ is a point of $\bigcup_{i=1}^{n}\ulIso_{P_{i}}(X,Y)$.
Thus, $X$ and $Y$ are isomorphic if and only if $\bigcup_{i=1}^{n}\ulIso_{P_{i}}(X,Y)\ne\emptyset$.
From Section \ref{sec:Hom and Iso schemes}, the last condition can
be algorithmically checked. 
\end{proof}
\begin{rem}
For a general K3 surface $X$, there are only finitely many very ample
class $x$ with $x^{2}$ being the prescribed number modulo the action
of $\Aut(X)$ \cite[2.6]{sterk1985finiteness}. Therefore, if we replace
the given very ample sheaf of $X$ by a suitable automorphism of $X$,
we can find an isomorphism $X\to Y$ (if any) with the Hilbert polynomial
in a finite set of potential candidates. But there is a priori no
way to know which automorphism does this job. 
\end{rem}

\begin{rem}
Let $X$ and $Y$ be K3 surfaces and let $\cL$ be the given very
ample sheaf of $Y$. There is no intrinsic invariants of $X$ and
$Y$ to determine the place of $f^{*}[\cL]$ in the ample cone of
$X$ for a potential isomorphism $f\colon X\to Y$. Indeed, when $\rho=2$
and they have infinite automorphisms, then a very ample class $l$
with $l^{2}=d$ is sent to infinitely many distinct lattice points
on the curve $x^{2}=d$. For two lattice points $l_{1}$ and $l_{2}$
on the curve, the sum $l_{1}+l_{2}$ can have arbitrarily large Euler
characteristic. 
\end{rem}

\begin{rem}
Discussion in this section indicates that the isomorphism problem
is closely related to complexity of the automorphism group. Recently,
Lesieutre \cite{lesieutre2018aprojective} showed that there exists
a projective variety $X$ whose automorphism group is discrete, but
not finitely generated (see also \cite{dinh2019asurface}). This result
may be considered to suggest that the isomorphism problem for general
projective schemes is not decidable. 
\end{rem}

\begin{comment}
\begin{thm}
\label{thm:rational poly}Let $X$ and $Y$ be smooth varieties with
rational polyhedral nef cones. Suppose that we know the Picard numbers
of $X$ and $Y$. Then we can algorithmically decide whether they
are isomorphic or not. In particular, the isomorphism problem of smooth
Fano varieties is decidable. 
\end{thm}

\begin{proof}
By the same argument as in the proof of Proposition \ref{prop: compute Nef},
we can compute the nef cones of $X$ and $Y$. The same argument as
in the proof of Theorem \ref{thm: decidable K3} shows the theorem.
As for the second assertion, it is known that a Fano variety has a
rational polyhedral nef cone. Moreover, from the Kodaira vanishing
theorem and the Lefschetz (1,1)-thereom, its Picard number is equal
to the second Betti number, which is computable from \cite{simpson2008algebraic},
see Section \ref{sec:Computing-intersection-numbers}. 
\end{proof}
\end{comment}

\bibliographystyle{alpha}
\bibliography{IsomProb}

\end{document}